 \newtheorem{thm}{Theorem}[section]
 \newtheorem{cor}[thm]{Corollary}
 \newtheorem{lem}[thm]{Lemma}
 \newtheorem{prop}[thm]{Proposition}
 \theoremstyle{definition}
 \newtheorem{defn}[thm]{Definition}
 \theoremstyle{remark}
 \newtheorem{rem}[thm]{Remark}
 \newtheorem*{ex}{Example}
 \numberwithin{equation}{section}
\begin{document}
%
%
%
%
%
%
\title[Characterizations of BLO and Campanato spaces]
{Some new characterizations of BLO and Campanato spaces in the Schr\"{o}dinger setting}
\author{Cong Chen}
\address{School of Mathematics and Systems Science,\\
Xinjiang University,\\
Urumqi 830046, P. R. China}

\email{3221043817@qq.com.}
\author{Hua Wang}
\address{School of Mathematics and Systems Science,\\
Xinjiang University,\\
Urumqi 830046, P. R. China}

\email{wanghua@pku.edu.cn.}
\subjclass{Primary 42B25, 42B35; Secondary 35J10}

\keywords{\textrm{BLO} spaces; Campanato spaces; Schr\"{o}dinger operator; critical radius function; John--Nirenberg inequality; weights}
\thanks{This work was supported by the Natural Science Foundation of China\\
(No. XJEDU2020Y002 and 2022D01C407)}
\date{\today}
\dedicatory{Dedicated to the memory of Li Xue.}
\begin{abstract}
Let us consider the Schr\"{o}dinger operator $\mathcal{L}=-\Delta+V$ on $\mathbb R^d$ with $d\geq3$, where $\Delta$ is the Laplacian operator on $\mathbb R^d$ and the nonnegative potential $V$ belongs to certain reverse H\"{o}lder class $RH_s$ with $s\geq d/2$. In this paper, the authors first introduce two kinds of function spaces related to the Schr\"{o}dinger operator $\mathcal{L}$. A real-valued function $f\in L^1_{\mathrm{loc}}(\mathbb R^d)$ belongs to the (BLO) space $\mathrm{BLO}_{\rho,\theta}(\mathbb R^d)$ with $0\leq\theta<\infty$ if
\begin{equation*}
\|f\|_{\mathrm{BLO}_{\rho,\theta}}
:=\sup_{\mathcal{Q}}\bigg(1+\frac{r}{\rho(x_0)}\bigg)^{-\theta}\bigg(\frac{1}{|Q(x_0,r)|}
\int_{Q(x_0,r)}\Big[f(x)-\underset{y\in\mathcal{Q}}{\mathrm{ess\,inf}}\,f(y)\Big]\,dx\bigg),
\end{equation*}
where the supremum is taken over all cubes $\mathcal{Q}=Q(x_0,r)$ in $\mathbb R^d$, $\rho(\cdot)$ is the critical radius function in the Schr\"{o}dinger context. For $0<\beta<1$, a real-valued function $f\in L^1_{\mathrm{loc}}(\mathbb R^d)$ belongs to the (Campanato) space $\mathcal{C}^{\beta,\ast}_{\rho,\theta}(\mathbb R^d)$ with $0\leq\theta<\infty$ if
\begin{equation*}
\|f\|_{\mathcal{C}^{\beta,\ast}_{\rho,\theta}}
:=\sup_{\mathcal{B}}\bigg(1+\frac{r}{\rho(x_0)}\bigg)^{-\theta}
\bigg(\frac{1}{|B(x_0,r)|^{1+\beta/d}}\int_{B(x_0,r)}\Big[f(x)-\underset{y\in\mathcal{B}}{\mathrm{ess\,inf}}\,f(y)\Big]\,dx\bigg),
\end{equation*}
where the supremum is taken over all balls $\mathcal{B}=B(x_0,r)$ in $\mathbb R^d$. Then we establish the corresponding John--Nirenberg inequality suitable for the space $\mathrm{BLO}_{\rho,\theta}(\mathbb R^d)$ with $0\leq\theta<\infty$ and $d\geq3$. Moreover, we give some new characterizations of the BLO and Campanato spaces related to $\mathcal{L}$ on weighted Lebesgue spaces, which is the extension of some earlier results.
\end{abstract}
\maketitle
\tableofcontents

\section{Introduction}\label{intro}
The theory of function spaces has been a central topic in modern analysis, and new function spaces are now of increasing use in the fields such as harmonic analysis and partial differential equations. The main purpose of this paper is to give several characterizations for the new BLO and Campanato spaces in the Schr\"{o}dinger setting. Another purpose of this paper is to establish a version of John--Nirenberg inequality for the new BLO space. Let $d\geq3$ be a positive integer and $\mathbb R^d$ be the $d$-dimensional Euclidean space, and let $V:\mathbb R^d\rightarrow\mathbb R$, $d\geq3$, be a nonnegative locally integrable function which belongs to the \emph{reverse H\"older class} $RH_s(\mathbb R^d)$ with $s\in(1,\infty]$. We recall that $V\in RH_s(\mathbb R^d)$ means that there exists a positive constant $C=C(s,V)>0$ such that the following \emph{reverse H\"older inequality}
\begin{equation*}
\bigg(\frac{1}{|B|}\int_B V(y)^s\,dy\bigg)^{1/s}\leq C\cdot\bigg(\frac{1}{|B|}\int_B V(y)\,dy\bigg)
\end{equation*}
holds for every ball $B$ in $\mathbb R^d$, with the usual modification made when $s=\infty$. In particular, if $V$ is a nonnegative polynomial, then $V\in RH_{\infty}(\mathbb R^d)$. Let us consider the \emph{Schr\"{o}dinger differential operator} with the nonnegative potential $V$.
\begin{equation*}
\mathcal{L}:=-\Delta+V \quad \mbox{on}~~~ \mathbb R^d,
\end{equation*}
where $\Delta=\sum_{j=1}^d\frac{\partial^2}{\partial x_j^2}$ is the standard Laplace operator on $\mathbb R^d$. As in \cite{shen}, for any given $V\in RH_s(\mathbb R^d)$ with $s\geq d/2$ and $d\geq3$, we introduce the \emph{critical radius function} $\rho(x)=\rho(x;V)$ (determined by $V$), which is defined by
\begin{equation}\label{rho}
\rho(x):=\sup\bigg\{r>0:\frac{1}{r^{d-2}}\int_{B(x,r)}V(y)\,dy\leq1\bigg\},\quad x\in\mathbb R^d,
\end{equation}
where $B(x,r)$ denotes the open ball with the center at $x$ and radius $r$. It is well known that this auxiliary function satisfies $0<\rho(x)<\infty$ for any $x\in\mathbb R^d$ under the above assumption on $V$ (see \cite{shen}).

Throughout this paper, we will always assume that $V\not\equiv0$ and $V\in RH_s(\mathbb R^d)$ with $s\geq d/2$.
\begin{ex}
The Schr\"{o}dinger operator $\mathcal{L}=-\Delta+V$ can be viewed as a perturbation of the Laplace operator.
\begin{enumerate}
  \item When $V\equiv1$, we obtain $\rho(x)=1$ for any $x\in\mathbb R^d$.
  \item When $V(x)=|x|^2$ and $\mathcal{L}$ becomes the Hermite operator, we obtain $\rho(x)\approx(1+|x|)^{-1}$.
\end{enumerate}
\end{ex}
The notation $\mathbf{X}\approx \mathbf{Y}$ means that there exists a positive constant $C>0$ such that $1/C\leq \mathbf{X}/\mathbf{Y}\leq C$.

We need the following known result concerning the critical radius function \eqref{rho}, which was proved by Shen in \cite{shen}.
\begin{lem}[\cite{shen}]\label{N0}
If $V\in RH_s(\mathbb R^d)$ with $s\geq d/2$ and $d\geq3$, then there exist two positive constants $C_0\geq 1$ and $N_0>0$ such that
\begin{equation}\label{com}
\frac{\,1\,}{C_0}\bigg(1+\frac{|x-y|}{\rho(x)}\bigg)^{-N_0}\leq\frac{\rho(y)}{\rho(x)}
\leq C_0\bigg(1+\frac{|x-y|}{\rho(x)}\bigg)^{\frac{N_0}{N_0+1}}
\end{equation}
for all $x,y\in\mathbb R^d.$
\end{lem}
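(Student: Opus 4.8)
The plan is to recast the definition \eqref{rho} in terms of the normalized mass
\[
\psi(x,r):=\frac{1}{r^{d-2}}\int_{B(x,r)}V(y)\,dy,
\]
and to read off \eqref{com} from the way $\psi$ scales in $r$ together with elementary inclusions of balls. First I would record that, after the usual self-improvement of the reverse H\"older exponent (so that we may assume $s>d/2$), $V\in L^s_{\mathrm{loc}}$ forces $\psi(x,r)\to0$ as $r\to0$, while $\psi(x,r)\to\infty$ as $r\to\infty$ because $V\not\equiv0$; since $r\mapsto\int_{B(x,r)}V$ is continuous, the supremum in \eqref{rho} is attained and $\psi(x,\rho(x))=1$ for every $x$. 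For the direct argument it suffices to treat $|x-y|\ge\rho(x)$, the complementary regime $|x-y|<\rho(x)$ giving $\rho(y)\approx\rho(x)$ from the scaling bounds below, so that the inequality we prove in fact holds for all pairs.

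Two quantitative consequences of the reverse H\"older inequality drive everything. The first is a local concentration bound: for $E\subset B$, H\"older's inequality and $RH_s$ on $B$ give $\int_E V\le C(|E|/|B|)^{1-1/s}\int_B V$, which in the form $\psi(x,r)\le C(r/R)^{\delta}\psi(x,R)$ for $r\le R$, with $\delta:=2-d/s>0$, says that $\psi$ cannot grow too slowly. The second, and the step I expect to be the main obstacle, is the \emph{doubling} property $\mu(B(x,2r))\le C\mu(B(x,r))$ of the measure $d\mu:=V\,dy$, which is what controls the growth of $\psi$ from above, $\psi(x,R)\le C(R/r)^{\ell_0}\psi(x,r)$ for $r\le R$ with $\ell_0$ fixed by the doubling constant. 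Reverse H\"older alone is weaker than the $A_\infty$ condition, so doubling is not automatic; I would extract it from the \emph{uniform} reverse H\"older constant by applying the concentration bound to a thin annulus $E=B(x,(1+\varepsilon)r)\setminus B(x,r)$: since $|E|/|B(x,(1+\varepsilon)r)|$ is small, one gets $\mu(E)\le\tfrac12\mu(B(x,(1+\varepsilon)r))$ for a fixed small $\varepsilon$, hence $\mu(B(x,(1+\varepsilon)r))\le2\mu(B(x,r))$, and iterating finitely many times upgrades this to genuine doubling; one checks $\ell_0\ge\delta>0$.

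With these two inputs I would prove the left-hand inequality of \eqref{com}, the lower bound $\rho(y)\ge C_0^{-1}\rho(x)(1+|x-y|/\rho(x))^{-N_0}$, which is the delicate half. The key idea is that the crude inclusion $B(y,t)\subset B(x,t+|x-y|)$ is far too lossy for bounding the mass of a \emph{small} ball about $y$, so reverse H\"older must be used to see that $V$ cannot concentrate near $y$. Writing $R=\rho(x)$, $D=|x-y|\ge R$ and aiming to verify $\psi(y,t_0)\le1$, I apply the concentration bound on the large ball $B(x,2D)\supset B(y,t_0)$ to get $\int_{B(y,t_0)}V\le C(t_0/D)^{d(1-1/s)}\int_{B(x,2D)}V$, and then control the right-hand mass by doubling and the normalization, $\int_{B(x,2D)}V\le CD^{d-2}(D/R)^{\ell_0}$. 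Balancing the resulting power of $t_0$ against the required $t_0^{\,d-2}$ shows $\psi(y,t_0)\le1$ as soon as $t_0\le cR(D/R)^{1-\ell_0/\delta}$, which is exactly the claimed bound with $N_0=\ell_0/\delta-1$.

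Finally, the right-hand inequality of \eqref{com} follows from the left-hand one by the symmetry $x\leftrightarrow y$ rather than a new geometric argument. Applying the bound just proved to the pair $(y,x)$ gives $\rho(x)\ge C_0^{-1}\rho(y)(1+|x-y|/\rho(y))^{-N_0}$; writing $u=\rho(y)$, $a=\rho(x)$, $t=|x-y|$ this reads $u\le C\,a\,(1+t/u)^{N_0}$, and solving for $u$ (trivially when $t\le u$, and by isolating $u^{N_0+1}\le C\,a\,t^{N_0}$ when $t>u$) yields $u\le C\,a\,(1+t/a)^{N_0/(N_0+1)}$. Thus the exponent $N_0/(N_0+1)$ is exactly the one produced by inverting the power $N_0$, and collecting the two halves and absorbing all constants into a single $C_0\ge1$ gives \eqref{com}. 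The only genuinely delicate point is the passage from reverse H\"older to the doubling of $V\,dy$; once doubling is in hand, the remainder is the bookkeeping of exponents built on the normalization $\psi(x,\rho(x))=1$ and the inclusion of balls.
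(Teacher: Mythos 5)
The paper does not actually prove this lemma: it is imported verbatim from Shen's work \cite{shen} (his Lemma 1.4), so the only meaningful comparison is with Shen's original argument. Your proposal is correct and follows essentially that same route: the normalization $\psi(x,\rho(x))=1$, the concentration estimate $\int_E V\le C\,(|E|/|B|)^{1-1/s}\int_B V$ (whose concentric case is Shen's Lemma 1.2), the doubling of $V\,dy$, the balancing of the exponents $\delta=2-d/s$ and $\ell_0$ on the ball $B(x,2D)$, and the inversion trick converting the lower bound with exponent $N_0$ into the upper bound with exponent $N_0/(N_0+1)$ are exactly the ingredients of Shen's proof; your thin-annulus argument merely makes self-contained the doubling property that Shen quotes from the standard theory of reverse H\"older weights. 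Two small repairs: $\psi(x,r)\to\infty$ as $r\to\infty$ does not follow from $V\not\equiv0$ alone (that only gives $\psi(x,r)\ge c\,r^{-(d-2)}$); it needs the lower scaling bound $\psi(x,R)\ge C^{-1}(R/r_0)^{\delta}\psi(x,r_0)$ that you state two sentences later, so the logical order should be reversed. Also enlarge $\ell_0$ if necessary so that $\ell_0>\delta$ strictly, which costs nothing and guarantees $N_0=\ell_0/\delta-1>0$.
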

To state our main results, we first recall the definition of the classical BMO space and BLO space.

A locally integrable function $f$ on $\mathbb R^d$ is said to belong to $\mathrm{BMO}(\mathbb R^d)$, the space of bounded mean oscillation, if
\begin{equation*}
\|f\|_{\mathrm{BMO}}:=\sup_{\mathcal{Q}}\frac{1}{|\mathcal{Q}|}\int_{\mathcal{Q}}\big|f(x)-f_{\mathcal{Q}}\big|\,dx<\infty,
\end{equation*}
where the supremum is taken over all cubes $\mathcal{Q}$ in $\mathbb R^d$ and $f_{\mathcal{Q}}$ stands for the mean value of $f$ over $\mathcal{Q}$; that is,
\begin{equation*}
f_{\mathcal{Q}}:=\frac{1}{|\mathcal{Q}|}\int_{\mathcal{Q}}f(y)\,dy.
\end{equation*}
The space of BMO functions was first introduced by John and Nirenberg in \cite{john}.

A locally integrable function $f$ on $\mathbb R^d$ is said to belong to $\mathrm{BLO}(\mathbb R^d)$, the space of bounded lower oscillation, if
there exists a constant $c_1>0$ such that for any cube $\mathcal{Q}\subset\mathbb R^d$,
\begin{equation*}
\frac{1}{|\mathcal{Q}|}\int_{\mathcal{Q}}\Big[f(x)-\underset{y\in\mathcal{Q}}{\mathrm{ess\,inf}}\,f(y)\Big]\,dx\leq c_1.
\end{equation*}
The minimal constant $c_1$ as above is defined to be the BLO-constant of $f$ and denoted by $\|f\|_{\mathrm{BLO}}$. The space of BLO functions was first introduced by Coifman and Rochberg in \cite{coifman}. It is easy to see that
\begin{equation*}
L^{\infty}(\mathbb R^d)\subset\mathrm{BLO}(\mathbb R^d)\subset\mathrm{BMO}(\mathbb R^d).
\end{equation*}
Moreover, the above inclusion relations are both strict, see \cite{meng1,meng2,ou} for some examples. It is easy to verify that
\begin{equation}\label{bmoblo}
\|f\|_{\mathrm{BMO}}\leq 2\|f\|_{\mathrm{BLO}}.
\end{equation}
In fact, for any cube $\mathcal{Q}\subset\mathbb R^d$,
\begin{equation*}
\begin{split}
\frac{1}{|\mathcal{Q}|}\int_{\mathcal{Q}}\big|f(x)-f_{\mathcal{Q}}\big|\,dx
&=\frac{1}{|\mathcal{Q}|}\int_{\mathcal{Q}}\Big|f(x)
-\underset{y\in\mathcal{Q}}{\mathrm{ess\,inf}}\,f(y)+\underset{y\in\mathcal{Q}}{\mathrm{ess\,inf}}\,f(y)-f_{\mathcal{Q}}\Big|\,dx\\
&\leq\frac{1}{|\mathcal{Q}|}\int_{\mathcal{Q}}\Big[f(x)-\underset{y\in\mathcal{Q}}{\mathrm{ess\,inf}}\,f(y)\Big]\,dx
+\Big|\underset{y\in\mathcal{Q}}{\mathrm{ess\,inf}}\,f(y)-f_{\mathcal{Q}}\Big|\\
&\leq\frac{2}{|\mathcal{Q}|}\int_{\mathcal{Q}}\Big[f(x)-\underset{y\in\mathcal{Q}}{\mathrm{ess\,inf}}\,f(y)\Big]\,dx
\leq 2\|f\|_{\mathrm{BLO}},
\end{split}
\end{equation*}
as desired.
\begin{rem}
It should be pointed out that $\|\cdot\|_{\mathrm{BLO}}$ is not a norm and $\mathrm{BLO}(\mathbb R^d)$ is not a linear space (it is a proper subspace of $\mathrm{BMO}(\mathbb R^d)$).
\end{rem}
On the other hand, the classical Campanato space was studied extensively in the literature, and played an important role in the study of harmonic analysis and partial differential equations. Let $0<\beta\leq1$. A locally integrable function $f$ is said to belong to the Campanato space $\mathcal{C}^{\beta}(\mathbb R^d)$
if
\begin{equation*}
\|f\|_{\mathcal{C}^{\beta}}:=\sup_{\mathcal{B}}\frac{1}{|\mathcal{B}|^{1+\beta/d}}\int_{\mathcal{B}}\big|f(x)-f_{\mathcal{B}}\big|\,dx<\infty,
\end{equation*}
where the supremum is taken over all balls $\mathcal{B}$ in $\mathbb R^d$ and $f_{\mathcal{B}}$ stands for the mean value of $f$ over $\mathcal{B}$.
The Campanato space $\mathcal{C}^{\beta}(\mathbb R^d)$ was first introduced by Campanato in \cite{cam}. In 2007, motivated by the definition of the space $\mathrm{BLO}(\mathbb R^d)$, Hu--Meng--Yang introduced the following space $\mathcal{C}^{\beta,\ast}(\mathbb R^d)$, which is a subspace of $\mathcal{C}^{\beta}(\mathbb R^d)$. Let $0<\beta\leq1$. A locally integrable function $f$ is said to belong to $\mathcal{C}^{\beta,\ast}(\mathbb R^d)$ if there exists a positive constant $c_2>0$ such that for any ball $\mathcal{B}\subset\mathbb R^d$,
\begin{equation*}
\frac{1}{|\mathcal{B}|^{1+\beta/d}}\int_{\mathcal{B}}\Big[f(x)-\underset{y\in\mathcal{B}}{\mathrm{ess\,inf}}\,f(y)\Big]\,dx\leq c_2.
\end{equation*}
The minimal constant $c_2$ as above is defined to be the $\mathcal{C}^{\beta,\ast}$-constant of $f$ and denoted by $\|f\|_{\mathcal{C}^{\beta,\ast}}$.
\begin{rem}
\begin{enumerate}
  \item As in \eqref{bmoblo}, we also have that
  \begin{equation*}
  \mathcal{C}^{\beta,\ast}(\mathbb R^d)\subset\mathcal{C}^{\beta}(\mathbb R^d)\quad \&\quad \|f\|_{\mathcal{C}^{\beta}}\leq 2\|f\|_{\mathcal{C}^{\beta,\ast}}.
  \end{equation*}
  \item We point out that $\|\cdot\|_{\mathcal{C}^{\beta,\ast}}$ is not a norm and $\mathcal{C}^{\beta,\ast}(\mathbb R^d)$ is not a linear space (it is a proper subspace of $\mathcal{C}^{\beta}(\mathbb R^d)$).
\end{enumerate}
\end{rem}
In 2011, Bongioanni--Harboure--Salinas \cite{bong3} introduced a new class of function spaces (see also \cite{bong2}). According to \cite{bong3}, the new BMO space $\mathrm{BMO}_{\rho,\infty}(\mathbb R^d)$ is defined by
\begin{equation*}
\mathrm{BMO}_{\rho,\infty}(\mathbb R^d):=\bigcup_{\theta>0}\mathrm{BMO}_{\rho,\theta}(\mathbb R^d),
\end{equation*}
where for any fixed $0<\theta<\infty$ the space $\mathrm{BMO}_{\rho,\theta}(\mathbb R^d)$ is defined to be the set of all locally integrable functions $f$ satisfying
\begin{equation}\label{BM}
\frac{1}{|Q(x_0,r)|}\int_{Q(x_0,r)}\big|f(x)-f_{Q}\big|\,dx\leq C_1\cdot\bigg(1+\frac{r}{\rho(x_0)}\bigg)^{\theta},
\end{equation}
for all $x_0\in\mathbb R^d$ and $r\in(0,\infty)$, $f_{Q}$ denotes the mean value of $f$ on $Q(x_0,r)$.
A norm for $f\in \mathrm{BMO}_{\rho,\theta}(\mathbb R^d)$, denoted by $\|f\|_{\mathrm{BMO}_{\rho,\theta}}$, is given by the infimum of the constants satisfying \eqref{BM}, after identifying functions that differ by a constant, or equivalently,
\begin{equation*}
\|f\|_{\mathrm{BMO}_{\rho,\theta}}
:=\sup_{\mathcal{Q}}\bigg(1+\frac{r}{\rho(x_0)}\bigg)^{-\theta}\bigg(\frac{1}{|Q(x_0,r)|}\int_{Q(x_0,r)}\big|f(x)-f_{Q}\big|\,dx\bigg),
\end{equation*}
where the supremum is taken over all cubes $\mathcal{Q}=Q(x_0,r)$ with $x_0\in\mathbb R^d$ and $r\in(0,\infty)$. Note that if we let $\theta=0$ in \eqref{BM}, we obtain the classical (John--Nirenberg) BMO space. Define
\begin{equation*}
\mathrm{BMO}_{\rho,\theta}(\mathbb R^d):=\Big\{f\in L^1_{\mathrm{loc}}(\mathbb R^d):\|f\|_{\mathrm{BMO}_{\rho,\theta}}<\infty\Big\}.
\end{equation*}
With the above definition in mind, one has
\begin{equation*}
\mathrm{BMO}(\mathbb R^d)\subset \mathrm{BMO}_{\rho,\theta_1}(\mathbb R^d)\subset \mathrm{BMO}_{\rho,\theta_2}(\mathbb R^d)
\end{equation*}
whenever $0<\theta_1<\theta_2<\infty$, and hence
\begin{equation*}
\mathrm{BMO}(\mathbb R^d)\subset\mathrm{BMO}_{\rho,\infty}(\mathbb R^d).
\end{equation*}
Moreover, it can be shown that the classical BMO space is properly contained in $\mathrm{BMO}_{\rho,\infty}(\mathbb R^d)$ (see \cite{bong2,bong3,tang} for more examples).

A classical result due to John and Nirenberg in the 1960's(also known as the John--Nirenberg inequality) states that there exist two positive constants $C_1$ and $C_2$ such that for every cube $\mathcal{Q}=Q(x_0,r)$ in $\mathbb R^d$ and every $\lambda>0$, we have (see \cite{john})
\begin{equation*}
\Big|\Big\{x\in \mathcal{Q}:|f(x)-f_{\mathcal{Q}}|>\lambda\Big\}\Big|
\leq C_1|\mathcal{Q}|\exp\bigg\{-\frac{C_2\lambda}{\|f\|_{\mathrm{BMO}}}\bigg\},
\end{equation*}
when $f\in\mathrm{BMO}(\mathbb R^d)$.

In 2015, Tang proved a version of John--Nirenberg inequality suitable for the new BMO space $\mathrm{BMO}_{\rho,\theta}(\mathbb R^d)$ with $\theta>0$. His proof can be found in \cite[Proposition 4.2]{tang}.
\begin{lem}[\cite{tang}]\label{expbmo}
If $f\in \mathrm{BMO}_{\rho,\theta}(\mathbb R^d)$ with $0<\theta<\infty$, then there exist two positive constants $C_1$ and $C_2$ such that for every cube $\mathcal{Q}=Q(x_0,r)$ in $\mathbb R^d$ and every $\lambda>0$, we have
\begin{equation*}
\Big|\Big\{x\in \mathcal{Q}:|f(x)-f_{\mathcal{Q}}|>\lambda\Big\}\Big|
\leq C_1|\mathcal{Q}|\exp\bigg\{-\bigg(1+\frac{r}{\rho(x_0)}\bigg)^{-(N_0+1)\theta}\frac{C_2\lambda}{\|f\|_{\mathrm{BMO}_{\rho,\theta}}}\bigg\},
\end{equation*}
where $N_0$ is the constant appearing in Lemma \ref{N0}.
\end{lem}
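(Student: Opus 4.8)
The plan is to reduce this weighted, localized estimate to the classical John--Nirenberg inequality. The key observation is that once the cube $\mathcal{Q}=Q(x_0,r)$ is fixed, the localized oscillation bound, although cube-dependent through the factor $(1+r/\rho(x_0))^{\theta}$, can be replaced on \emph{every} subcube of $\mathcal{Q}$ by one uniform constant $B$; after this uniformization the standard Calder\'on--Zygmund stopping-time iteration applies with $B$ playing the role of the ordinary BMO constant.

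First I would fix $\mathcal{Q}=Q(x_0,r)$ and take an arbitrary subcube $Q'=Q(x',r')\subseteq\mathcal{Q}$, so that $r'\le r$ and $|x'-x_0|\le\sqrt{d}\,r$, whence $1+|x'-x_0|/\rho(x_0)\le\sqrt{d}\,(1+r/\rho(x_0))$. The lower bound in Lemma \ref{N0} (with $x=x_0$, $y=x'$) then gives
\[
\rho(x')\ge \frac{1}{C_0}\,\rho(x_0)\Big(1+\tfrac{|x'-x_0|}{\rho(x_0)}\Big)^{-N_0}\ge c\,\rho(x_0)\Big(1+\tfrac{r}{\rho(x_0)}\Big)^{-N_0},
\]
so that $1+r'/\rho(x')\le 1+r/\rho(x')\le C\big(1+r/\rho(x_0)\big)^{N_0+1}$. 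Inserting this into the defining inequality of $\mathrm{BMO}_{\rho,\theta}$ on $Q'$ yields
\[
\frac{1}{|Q'|}\int_{Q'}\big|f(x)-f_{Q'}\big|\,dx\le \|f\|_{\mathrm{BMO}_{\rho,\theta}}\Big(1+\tfrac{r'}{\rho(x')}\Big)^{\theta}\le B,\quad B:=C\,\|f\|_{\mathrm{BMO}_{\rho,\theta}}\Big(1+\tfrac{r}{\rho(x_0)}\Big)^{(N_0+1)\theta}.
\]
This is precisely the step that manufactures the exponent $(N_0+1)\theta$ in the conclusion.

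Next I would run the classical argument inside $\mathcal{Q}$ with $B$ as the effective oscillation constant. Normalizing $f_{\mathcal{Q}}=0$ and performing a Calder\'on--Zygmund decomposition of $\mathcal{Q}$ at height $2B$ produces maximal dyadic subcubes $\{Q_j\}$ with $2B<\frac{1}{|Q_j|}\int_{Q_j}|f|\le 2^{d+1}B$; the bound $B$ on $\mathcal{Q}$ itself gives the packing estimate $\sum_j|Q_j|\le\frac{1}{2B}\int_{\mathcal{Q}}|f|\le\frac12|\mathcal{Q}|$, while $|f|\le 2B$ a.e. off $\bigcup_jQ_j$ and $|f_{Q_j}|\le 2^{d+1}B$. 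Writing $F(\lambda):=\sup_{P\subseteq\mathcal{Q}}\frac{1}{|P|}\big|\{x\in P:|f-f_{P}|>\lambda\}\big|$, where the supremum is over subcubes $P$ of $\mathcal{Q}$ (all of which inherit the constant $B$ by the previous step), the containment $\{|f|>\lambda\}\subseteq\bigcup_jQ_j$ together with $|f-f_{Q_j}|>\lambda-2^{d+1}B$ on $Q_j$ yields the self-improving recursion $F(\lambda)\le\frac12\,F(\lambda-2^{d+1}B)$ for $\lambda>2^{d+1}B$. Iterating gives $F(\lambda)\le C_1\exp(-C_2\lambda/B)$, and substituting the value of $B$ converts $e^{-C_2\lambda/B}$ into $\exp\{-(1+r/\rho(x_0))^{-(N_0+1)\theta}C_2\lambda/\|f\|_{\mathrm{BMO}_{\rho,\theta}}\}$, which is the asserted inequality after multiplying by $|\mathcal{Q}|$.

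The main obstacle is the uniformization in the second paragraph: the entire argument rests on collapsing the cube-dependent localized norm into a single constant $B$ valid simultaneously for all subcubes of $\mathcal{Q}$, and on checking that the stopping cubes $Q_j$ (being subcubes of $\mathcal{Q}$) genuinely inherit this constant so that the recursion closes. This is exactly where Lemma \ref{N0} and the elementary bound $|x'-x_0|\le\sqrt{d}\,r$ enter; once $B$ is in hand, what remains is the textbook John--Nirenberg iteration and the routine bookkeeping of the dimensional constants $C_1$ and $C_2$.
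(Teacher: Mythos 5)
Your proof is correct, and it is worth recording how it sits relative to the paper: the paper never proves Lemma \ref{expbmo} itself (it is quoted from Tang), but it does give a full proof of the exact BLO analogue, Lemma \ref{expblo}, and that proof is the natural benchmark. Your argument differs from it in organization rather than in substance. You uniformize first: using the left-hand inequality of \eqref{com} you show that every subcube $Q'=Q(x',r')\subseteq\mathcal{Q}$ satisfies $1+r'/\rho(x')\le C\,(1+r/\rho(x_0))^{N_0+1}$, so that a single constant $B\approx\|f\|_{\mathrm{BMO}_{\rho,\theta}}(1+r/\rho(x_0))^{(N_0+1)\theta}$ dominates the mean oscillation on all subcubes of $\mathcal{Q}$ simultaneously, and then the classical John--Nirenberg machinery (Calder\'on--Zygmund decomposition at height $2B$, the sup-over-subcubes distribution function $F$, and the recursion $F(\lambda)\le\tfrac12F(\lambda-2^{d+1}B)$, which closes precisely because the stopping cubes inherit $B$) is invoked as a black box with $B$ in the role of the ordinary BMO constant. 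The paper instead interleaves the two ingredients: it runs the Calder\'on--Zygmund iteration generation by generation with cube-dependent stopping heights $\sigma(1+r_k/\rho(x_k))^{\theta}$, and applies the same consequence of Lemma \ref{N0} (recorded there as \eqref{wangh3}) at each combination step, so that after $k$ generations the function is controlled by $C_0^{\theta}k\sigma 2^d(1+r/\rho(x_0))^{(N_0+1)\theta}$ off an exceptional set of measure at most $\sigma^{-k}|\mathcal{Q}|$. The mathematical content is identical: in both arguments Lemma \ref{N0} is the sole source of the exponent $(N_0+1)\theta$, and in both the constant $C_2$ degrades like $C_0^{-\theta}$ (so its dependence on $\theta$ is unavoidable and matches the paper's explicit $\overline{C}_2=1/(C_0^{\theta}2^de)$). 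What your scheme buys is modularity --- all Schr\"odinger-specific input is concentrated in one preliminary estimate, after which the classical theorem applies verbatim --- at the harmless price of an extra dimensional factor (of order $d^{N_0\theta/2}$) in $B$ coming from comparing cube centers rather than points of a ball.
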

In 2014, Liu--Sheng introduced a new class of function spaces which is larger than the classical Campanato space. According to \cite{liu}, for $0<\theta<\infty$ and $0\leq\beta\leq1$, the space $\mathcal{\mathcal{C}}^{\beta}_{\rho,\theta}(\mathbb R^d)$ is defined to be the set of all locally integrable functions $f$ satisfying
\begin{equation}\label{Lipliu}
\frac{1}{|B(x_0,r)|^{1+\beta/d}}\int_{B(x_0,r)}\big|f(x)-f_{B}\big|\,dx
\leq C_2\cdot\bigg(1+\frac{r}{\rho(x_0)}\bigg)^{\theta},
\end{equation}
for all $x_0\in\mathbb R^d$ and $r\in(0,\infty)$, $f_{B}$ denotes the mean value of $f$ on $B(x_0,r)$. The infimum of the constants satisfying \eqref{Lipliu} is defined to be the norm of $f\in \mathcal{\mathcal{C}}^{\beta}_{\rho,\theta}(\mathbb R^d)$ and denoted by $\|f\|_{\mathcal{C}^{\beta}_{\rho,\theta}}$, or equivalently,
\begin{equation*}
\|f\|_{\mathcal{\mathcal{C}}^{\beta}_{\rho,\theta}}:=\sup_{\mathcal{B}}\bigg(1+\frac{r}{\rho(x_0)}\bigg)^{-\theta}
\bigg(\frac{1}{|B(x_0,r)|^{1+\beta/d}}\int_{B(x_0,r)}\big|f(x)-f_{B}\big|\,dx\bigg),
\end{equation*}
where the supremum is taken over all balls $\mathcal{B}=B(x_0,r)$ with $x_0\in\mathbb R^d$ and $r\in(0,\infty)$.
\begin{rem}
\begin{enumerate}
Some special cases:
  \item Note that if $\theta=0$ in \eqref{Lipliu}, then $\mathcal{C}^{\beta}_{\rho,\theta}(\mathbb R^d)$ is exactly the classical Campanato space $\mathcal{C}^{\beta}(\mathbb R^d)$(see, for instance, \cite{cam,janson});
  \item Note that if $\beta=0$ and $0<\theta<\infty$ in \eqref{Lipliu}, then $\mathcal{C}^{\beta}_{\rho,\theta}(\mathbb R^d)$ is exactly the above space $\mathrm{BMO}_{\rho,\theta}(\mathbb R^d)$ introduced by Bongioanni--Harboure--Salinas in \cite{bong3}.
\end{enumerate}
\end{rem}
Define
\begin{equation*}
\mathcal{C}^{\beta}_{\rho,\theta}(\mathbb R^d):=\Big\{f\in L^1_{\mathrm{loc}}(\mathbb R^d):\|f\|_{\mathcal{C}^{\beta}_{\rho,\theta}}<\infty\Big\}.
\end{equation*}
Since $[1+r/{\rho(x_0)}]^{\theta}\geq1$, it is obvious that
\begin{equation*}
\mathcal{C}^{\beta}(\mathbb R^d)\subset \mathcal{C}^{\beta}_{\rho,\theta_1}(\mathbb R^d)\subset \mathcal{C}^{\beta}_{\rho,\theta_2}(\mathbb R^d)
\end{equation*}
whenever $0<\theta_1<\theta_2<\infty$. Then we write
\begin{equation*}
\mathcal{C}^{\beta}_{\rho,\infty}(\mathbb R^d):=\bigcup_{\theta>0}\mathcal{C}^{\beta}_{\rho,\theta}(\mathbb R^d).
\quad \&\quad \mathcal{C}^{\beta}(\mathbb R^d)\subset\mathcal{C}^{\beta}_{\rho,\infty}(\mathbb R^d).
\end{equation*}

Motivated by the definition of $\mathrm{BLO}(\mathbb R^d)$ and $\mathcal{C}^{\beta,\ast}(\mathbb R^d)$, we now introduce the following spaces $\mathrm{BLO}_{\rho,\theta}(\mathbb R^d)$ and $\mathcal{C}^{\beta,\ast}_{\rho,\theta}(\mathbb R^d)$ related to Schr\"{o}dinger operators.
\begin{defn}
Let $\theta\in[0,\infty)$. A locally integrable function $f$ is said to belong to the space $\mathrm{BLO}_{\rho,\theta}(\mathbb R^d)$, if there exists a positive constant $\overline{C}_1>0$ such that for any cube $\mathcal{Q}=Q(x_0,r)\subset\mathbb R^d$,
\begin{equation}\label{wangdef1}
\frac{1}{|Q(x_0,r)|}\int_{Q(x_0,r)}\Big[f(x)-\underset{y\in\mathcal{Q}}{\mathrm{ess\,inf}}\,f(y)\Big]\,dx\leq \overline{C}_1\cdot\bigg(1+\frac{r}{\rho(x_0)}\bigg)^{\theta}.
\end{equation}
The infimum of the constants $\overline{C}_1$ satisfying \eqref{wangdef1} is defined to be the $\mathrm{BLO}_{\rho,\theta}$-constant of $f$ and denoted by $\|f\|_{\mathrm{BLO}_{\rho,\theta}}$, that is,
\begin{equation*}
\|f\|_{\mathrm{BLO}_{\rho,\theta}}
:=\sup_{\mathcal{Q}}\bigg(1+\frac{r}{\rho(x_0)}\bigg)^{-\theta}
\bigg(\frac{1}{|Q(x_0,r)|}\int_{Q(x_0,r)}\Big[f(x)-\underset{y\in\mathcal{Q}}{\mathrm{ess\,inf}}\,f(y)\Big]\,dx\bigg),
\end{equation*}
where the supremum is taken over all cubes $\mathcal{Q}=Q(x_0,r)$ with $x_0\in\mathbb R^d$ and $r\in(0,\infty)$.
\end{defn}

\begin{defn}
Let $\beta\in(0,1]$ and $\theta\in[0,\infty)$. A locally integrable function $f$ is said to belong to the space $\mathcal{C}^{\beta,\ast}_{\rho,\theta}(\mathbb R^d)$, if there exists some constant $\overline{C}_2>0$ such that for any ball $\mathcal{B}=B(x_0,r)\subset\mathbb R^d$,
\begin{equation}\label{wangdef2}
\frac{1}{|B(x_0,r)|^{1+\beta/d}}\int_{B(x_0,r)}\Big[f(x)-\underset{y\in\mathcal{B}}{\mathrm{ess\,inf}}\,f(y)\Big]\,dx
\leq \overline{C}_2\cdot\bigg(1+\frac{r}{\rho(x_0)}\bigg)^{\theta}.
\end{equation}
The infimum of the constants $\overline{C}_2$ satisfying \eqref{wangdef2} is defined to be the $\mathcal{C}^{\beta,\ast}_{\rho,\theta}$-constant of $f$ and denoted by $\|f\|_{\mathcal{C}^{\beta,\ast}_{\rho,\theta}}$, that is,
\begin{equation*}
\|f\|_{\mathcal{C}^{\beta,\ast}_{\rho,\theta}}
:=\sup_{\mathcal{B}}\bigg(1+\frac{r}{\rho(x_0)}\bigg)^{-\theta}
\bigg(\frac{1}{|B(x_0,r)|^{1+\beta/d}}
\int_{B(x_0,r)}\Big[f(x)-\underset{y\in\mathcal{B}}{\mathrm{ess\,inf}}\,f(y)\Big]\,dx\bigg),
\end{equation*}
where the supremum is taken over all balls $\mathcal{B}=B(x_0,r)$ with $x_0\in\mathbb R^d$ and $r\in(0,\infty)$.
\end{defn}

Adapting the arguments in \cite[p.123]{duoand} and \cite[p.124]{grafakos2}, we can also prove the following variant of John--Nirenberg inequality: if $f\in\mathrm{BLO}(\mathbb R^d)$, then there exist two positive constants $C_3$ and $C_4$ such that for every cube $\mathcal{Q}$ and every $\lambda>0$, we have (see \cite{wangding})
\begin{equation*}
\Big|\Big\{x\in \mathcal{Q}:\Big[f(x)-\underset{y\in\mathcal{Q}}{\mathrm{ess\,inf}}\,f(y)\Big]>\lambda\Big\}\Big|\leq
C_3|\mathcal{Q}|\exp\bigg\{-\frac{C_4\lambda}{\|f\|_{\mathrm{BLO}}}\bigg\}.
\end{equation*}
Inspired by these results, we will establish a version of John--Nirenberg inequality with precise constants for the space $\mathrm{BLO}_{\rho,\theta}(\mathbb R^d)$ related to Schr\"{o}dinger operators. That is, if $f\in \mathrm{BLO}_{\rho,\theta}(\mathbb R^d)$ with $0<\theta<\infty$, then there exist two positive constants $\overline{C}_3$ and $\overline{C}_4$ such that for every cube $\mathcal{Q}=Q(x_0,r)$ and every $\lambda>0$, we have
\begin{equation*}
\begin{split}
&\Big|\Big\{x\in \mathcal{Q}:\Big[f(x)-\underset{y\in\mathcal{Q}}{\mathrm{ess\,inf}}\,f(y)\Big]>\lambda\Big\}\Big|\\
&\leq \overline{C}_3|\mathcal{Q}|\exp\bigg\{-\bigg(1+\frac{r}{\rho(x_0)}\bigg)^{-(N_0+1)\theta}\frac{\overline{C}_4\lambda}{\|f\|_{\mathrm{BLO}_{\rho,\theta}}}\bigg\},
\end{split}
\end{equation*}
where $N_0$ is the constant appearing in Lemma \ref{N0}. Based on this result, the goal of this paper is to give some new characterizations for the space $\mathrm{BLO}_{\rho,\theta}(\mathbb R^d)$ with $0<\theta<\infty$. Moreover, we also establish similar results for the space $\mathcal{C}^{\beta,\ast}_{\rho,\theta}(\mathbb R^d)$ with $0<\beta\leq 1$ and $0<\theta<\infty$.

\section{Notations and preliminaries}
A weight will always mean a non-negative function $\omega$ on $\mathbb R^d$ which is locally integrable. For a Lebesgue measurable set $E\subset\mathbb R^d$ and a weight $\omega$, we use the notation $|E|$ to denote the Lebesgue measure of $E$ and $\omega(E)$ to denote the weighted measure of $E$,
\begin{equation*}
\omega(E):=\int_E \omega(x)\,dx.
\end{equation*}
In the sequel, for any positive number $\gamma>0$, we denote $\omega^{\gamma}(x):=\omega(x)^{\gamma}$ by convention. For a measurable set $E\subset\mathbb R^d$, we let
\begin{equation*}
\omega^{\gamma}(E)=(\omega^{\gamma})(E):=\int_E \omega^{\gamma}(x)\,dx.
\end{equation*}
For any given ball $B=B(x_0,r)$ and $\lambda\in(0,\infty)$, we will write $\lambda B$ for the $\lambda$-dilate ball, which is the ball with the same center $x_0$ and radius $\lambda r$; that is $\lambda B=B(x_0,\lambda r)$. Similarly, $Q(x_0,r)$ denotes the cube centered at $x_0$ and with the sidelength $r$. Here and in what follows, only cubes with sides parallel to the coordinate axes are considered, and $\lambda Q=Q(x_0,\lambda r)$.
Let us recall two classes of weights that are given in terms of the critical radius function \eqref{rho}. As in \cite{bong1} (see also \cite{bong2}), we say that a weight $\omega$ belongs to the class $A^{\rho,\theta}_p(\mathbb R^d)$ for $1<p<\infty$ and $0<\theta<\infty$, if there is a positive constant $C>0$ such that for all balls $B=B(x_0,r)\subset\mathbb R^d$ with $x_0\in\mathbb R^d$ and $r\in(0,\infty)$,
\begin{equation*}
\bigg(\frac{1}{|B|}\int_B \omega(x)\,dx\bigg)^{1/p}\bigg(\frac{1}{|B|}\int_B \omega(x)^{-{p'}/p}\,dx\bigg)^{1/{p'}}
\leq C\cdot\bigg(1+\frac{r}{\rho(x_0)}\bigg)^{\theta},
\end{equation*}
where $p'=p/{(p-1)}$ denotes the \emph{conjugate exponent} of $p$, namely, $1/p+1/{p'}=1$. For $p=1$ and $0<\theta<\infty$, we also say that a weight $\omega$ belongs to the class $A^{\rho,\theta}_1(\mathbb R^d)$, if there is a positive constant $C>0$ such that for all balls $B=B(x_0,r)\subset\mathbb R^d$ with $x_0\in\mathbb R^d$ and $r\in(0,\infty)$,
\begin{equation*}
\frac1{|B|}\int_B \omega(x)\,dx\leq C\cdot\bigg(1+\frac{r}{\rho(x_0)}\bigg)^{\theta}\underset{x\in B}{\mbox{ess\,inf}}\;\omega(x).
\end{equation*}
Since
\begin{equation}\label{cc}
1\leq\bigg(1+\frac{r}{\rho(x_0)}\bigg)^{\theta_1}\leq\bigg(1+\frac{r}{\rho(x_0)}\bigg)^{\theta_2}
\end{equation}
whenever $0<\theta_1<\theta_2<\infty$, then for given $p$ with $1\leq p<\infty$, by definition, we have
\begin{equation*}
A_p(\mathbb R^d)\subset A^{\rho,\theta_1}_p(\mathbb R^d)\subset A^{\rho,\theta_2}_p(\mathbb R^d),
\end{equation*}
where $A_p(\mathbb R^d)$ denotes the classical Muckenhoupt class (see \cite{muck} and \cite{coi}). For any given $1\leq p<\infty$, as the classes $A^{\rho,\theta}_p(\mathbb R^d)$ increase with respect to $\theta$, it is natural to define
\begin{equation*}
A^{\rho,\infty}_p(\mathbb R^d):=\bigcup_{\theta>0}A^{\rho,\theta}_p(\mathbb R^d).
\end{equation*}
Consequently, one has the inclusion relation
\begin{equation*}
A_p(\mathbb R^d)\subset A^{\rho,\infty}_p(\mathbb R^d),\quad 1\leq p<\infty.
\end{equation*}
However, the converse is not true, it is easy to check that the above inclusion is strict. In fact, if $\omega\in A_p(\mathbb R^d)$ for some $p\geq1$, then $\omega(x)\,dx$ is a doubling measure (see \cite{duoand} and \cite{grafakos}), i.e., there exists a universal constant $C>0$ such that for any ball $B$ in $\mathbb R^d$,
\begin{equation*}
\omega(2B)\leq C\omega(B).
\end{equation*}
If $\omega\in A^{\rho,\theta}_p(\mathbb R^d)$ for some $p\geq1$ and $\theta>0$, then $\omega(x)\,dx$ may not be a doubling measure. For example, the weight
\begin{equation*}
\omega_{\gamma}(x)=(1+|x|)^{\gamma}\in A^{\rho,\theta}_p(\mathbb R^d)\quad \mbox{for any}~~ \gamma>d(p-1),
\end{equation*}
provided that $V\equiv1$ and $\rho(\cdot)\equiv1$. It is easy to see that such choice of $\omega_{\gamma}$ yields $\omega_{\gamma}(x)\,dx$ is not a doubling measure, hence it does not belong to $A_q(\mathbb R^d)$ for any $1\leq q<\infty$. The situation is more complicated. We can define (generalized) doubling classes of weights adapted to the Schr\"{o}dinger context, see \cite{bong9} and \cite{bong10}, for example.
In addition, for some fixed $\theta>0$, we have the following inclusion relations (see \cite{tang})
\begin{equation*}
A^{\rho,\theta}_1(\mathbb R^d)\subset A^{\rho,\theta}_{p_1}(\mathbb R^d)\subset A^{\rho,\theta}_{p_2}(\mathbb R^d),
\end{equation*}
whenever $1\leq p_1<p_2<\infty$.
As in the classical Muckenhoupt theory, we define the $A^{\rho,\theta}_{p}$ characteristic constants of $\omega$ as follows:
\begin{equation*}
[\omega]_{A^{\rho,\theta}_p}
:=\sup_{B\subset\mathbb R^d}\bigg(1+\frac{r}{\rho(x_0)}\bigg)^{-\theta}\bigg(\frac{1}{|B|}\int_B \omega(x)\,dx\bigg)^{1/p}\bigg(\frac{1}{|B|}\int_B \omega(x)^{-{p'}/p}\,dx\bigg)^{1/{p'}},
\end{equation*}
for $1<p<\infty,$ and
\begin{equation*}
[\omega]_{A^{\rho,\theta}_1}
:=\sup_{B\subset\mathbb R^d}\bigg(1+\frac{r}{\rho(x_0)}\bigg)^{-\theta}\bigg(\frac{1}{|B|}\int_B \omega(x)\,dx\bigg)
\bigg(\underset{x\in B}{\mbox{ess\,inf}}\,\omega(x)\bigg)^{-1},
\end{equation*}
for $p=1$, where the supremum is taken over all balls $B=B(x_0,r)$ in $\mathbb R^d$. In view of \eqref{cc}, we can see that if $\omega\in A^{\rho,\theta_1}_{p}(\mathbb R^d)$ with $1\leq p<\infty$ and $0\leq\theta_1<\infty$, then for any $\theta_1<\theta_2<\infty$, we have
\begin{equation*}
\omega\in A^{\rho,\theta_2}_{p}(\mathbb R^d)~~~~\qquad \&  ~~~~\qquad [\omega]_{A^{\rho,\theta_2}_p}\leq[\omega]_{A^{\rho,\theta_1}_p}.
\end{equation*}
Hence, for any given $\omega\in A^{\rho,\infty}_{p}(\mathbb R^d)$ with $1\leq p<\infty$, we let
\begin{equation*}
\theta^{\ast}:=\inf\Big\{\theta>0:\omega\in A^{\rho,\theta}_{p}(\mathbb R^d)\Big\}.
\end{equation*}
Now define the $A^{\rho,\infty}_{p}$ characteristic constant of $\omega$ by
\begin{equation*}
[\omega]_{A^{\rho,\infty}_p}:=[\omega]_{A^{\rho,\theta^{\ast}}_p}.
\end{equation*}
\begin{rem}
\begin{enumerate}
  \item It is well known that Muckenhoupt $A_p$ weights can be characterized by the weighted $L^p$ boundedness of the Hardy--Littlewood maximal operator and the Hilbert transform. For any given $\theta>0$, let us introduce the (Hardy--Littlewood type) maximal operator which is given in terms of the critical radius function \eqref{rho}.
\begin{equation*}
\mathcal{M}_{\rho,\theta}f(x):=\sup_{r>0}\bigg(1+\frac{r}{\rho(x)}\bigg)^{-\theta}\frac{1}{|B(x,r)|}\int_{B(x,r)}|f(y)|\,dy,\quad x\in\mathbb R^d.
\end{equation*}
The new classes $A^{\rho,\infty}_p(\mathbb R^d)$ are closely related to the family of maximal operators $\mathcal{M}_{\rho,\theta}$.
  \item Observe that a weight $\omega$ belongs to the class $A^{\rho,\infty}_1(\mathbb R^d)$ adapted to the Schr\"{o}dinger operator $\mathcal{L}$ if and only if there exists a positive number $\theta>0$ such that
      \begin{equation*}
      \mathcal{M}_{\rho,\theta}(\omega)(x)\leq C\cdot\omega(x),\quad \mbox{for}~a.e.~x\in\mathbb R^d,
      \end{equation*}
where the constant $C>0$ is independent of $\omega$. Moreover, as in the classical setting, the classes of weights adapted to the Schr\"{o}dinger operator $\mathcal{L}$ are characterized by the weighted $L^p$ boundedness of the corresponding maximal operators. Let $1<p<\infty$. It can be shown that $\omega\in A^{\rho,\infty}_p(\mathbb R^d)$ if and only if there exists a positive number $\theta>0$ such that $\mathcal{M}_{\rho,\theta}$ is bounded on $L^p(\omega)$ (see \cite{bong6} and \cite{bong7}, for example).
\item For the quantitative weighted estimates of the maximal operator $\mathcal{M}_{\rho,\theta}$ with $\theta>0$(the sharp $A^{\rho,\theta}_{p}$ bounds, which means the exponent of the characteristic constant $[\omega]_{A^{\rho,\theta}_p}$ is the best possible), see \cite{li} and \cite{zhang}.
\end{enumerate}
\end{rem}
As in \cite{tang} (see also \cite{bui} and \cite{wang}), we say that a weight $\omega$ is in the class
$A^{\rho,\theta}_{p,q}(\mathbb R^d)$ for $1<p,q<\infty$ and $0<\theta<\infty$, if there exists a positive constant $C>0$ such that
\begin{equation*}
\bigg(\frac{1}{|B|}\int_B \omega(x)^q\,dx\bigg)^{1/q}\bigg(\frac{1}{|B|}\int_B \omega(x)^{-{p'}}\,dx\bigg)^{1/{p'}}
\leq C\cdot\bigg(1+\frac{r}{\rho(x_0)}\bigg)^{\theta}
\end{equation*}
holds for all balls $B=B(x_0,r)\subset\mathbb R^d$. For the case $p=1$, we also say that a weight $\omega$ is in the class
$A^{\rho,\theta}_{1,q}(\mathbb R^d)$ for $1<q<\infty$ and $0<\theta<\infty$, if there exists a positive constant $C>0$ such that
\begin{equation*}
\bigg(\frac1{|B|}\int_B \omega(x)^q\,dx\bigg)^{1/q}\leq C\cdot\bigg(1+\frac{r}{\rho(x_0)}\bigg)^{\theta}
\underset{x\in B}{\mbox{ess\,inf}}\,\omega(x)
\end{equation*}
holds for all balls $B=B(x_0,r)\subset\mathbb R^d$. As before, we define the $A^{\rho,\theta}_{p,q}$ characteristic constants of $\omega$ as follows:
\begin{equation*}
[\omega]_{A^{\rho,\theta}_{p,q}}
:=\sup_{B\subset\mathbb R^d}\bigg(1+\frac{r}{\rho(x_0)}\bigg)^{-\theta}\bigg(\frac{1}{|B|}\int_B \omega(x)^q\,dx\bigg)^{1/q}\bigg(\frac{1}{|B|}\int_B \omega(x)^{-{p'}}\,dx\bigg)^{1/{p'}},
\end{equation*}
for $1<p,q<\infty$, and
\begin{equation*}
[\omega]_{A^{\rho,\theta}_{1,q}}
:=\sup_{B\subset\mathbb R^d}\bigg(1+\frac{r}{\rho(x_0)}\bigg)^{-\theta}\bigg(\frac1{|B|}\int_B \omega(x)^q\,dx\bigg)^{1/q}
\bigg(\underset{x\in B}{\mbox{ess\,inf}}\,\omega(x)\bigg)^{-1},
\end{equation*}
for $p=1$ and $1<q<\infty$. In view of \eqref{cc}, for any $1\leq p,q<\infty$, we find that
\begin{equation*}
A_{p,q}(\mathbb R^d)\subset A^{\rho,\theta_1}_{p,q}(\mathbb R^d)\subset A^{\rho,\theta_2}_{p,q}(\mathbb R^d),
\end{equation*}
whenever $0\leq\theta_1<\theta_2<\infty$, and
\begin{equation*}
[\omega]_{A^{\rho,\theta_2}_{p,q}}\leq[\omega]_{A^{\rho,\theta_1}_{p,q}}.
\end{equation*}
Here $A_{p,q}(\mathbb R^d)$ denotes the classical Muckenhoupt--Wheeden class (see \cite{muckenhoupt3}). Correspondingly, for $1\leq p,q<\infty$, we define
\begin{equation*}
A^{\rho,\infty}_{p,q}(\mathbb R^d):=\bigcup_{\theta>0}A^{\rho,\theta}_{p,q}(\mathbb R^d).
\end{equation*}
Obviously, for any fixed $\theta>0$,
\begin{equation*}
A_{p,q}(\mathbb R^d)\subset A^{\rho,\theta}_{p,q}(\mathbb R^d)\subset A^{\rho,\infty}_{p,q}(\mathbb R^d),\quad 1\leq p,q<\infty.
\end{equation*}
As before, for any given $\omega\in A^{\rho,\infty}_{p,q}(\mathbb R^d)$ with $1\leq p,q<\infty$, we let
\begin{equation*}
\theta^{\ast\ast}:=\inf\Big\{\theta>0:\omega\in A^{\rho,\theta}_{p,q}(\mathbb R^d)\Big\}.
\end{equation*}
Now define the $A^{\rho,\infty}_{p,q}$ characteristic constant of $\omega$ by
\begin{equation*}
[\omega]_{A^{\rho,\infty}_{p,q}}:=[\omega]_{A^{\rho,\theta^{\ast\ast}}_{p,q}}.
\end{equation*}
\begin{rem}
A few additional remarks are in order.
\begin{enumerate}
  \item Let us mention that in the definitions of both classes of weights $A^{\rho,\infty}_{p}(\mathbb R^d)$ and $A^{\rho,\infty}_{p,q}(\mathbb R^d)$, we can replace a ball $B(x,r)$ by a cube $Q(x,t)$ centered at $x$ with side length $t$, due to \eqref{com}.
  \item For more results about weighted norm inequalities of various integral operators in harmonic analysis (such as first or second order Riesz--Schr\"{o}dinger transforms, Schr\"{o}dinger type singular integrals, fractional integrals associated to the Schr\"{o}dinger operator $\mathcal{L}$, etc.), one can see \cite{bong1,bong2,bong3,bong4,bong5,bong6,bong7,bong8,bong9,bong10,bui,tang,tang2,wang,wang2}. Moreover, the quantitative weighted estimates for some operators such as the fractional maximal and integral operators, and Littlewood--Paley functions were recently obtained in \cite{li} and \cite{zhang}.
\end{enumerate}
\end{rem}
Throughout this paper, the letter $C$ stands for a positive constant which may vary from line to line. Constants with subscripts, such as $C_0$, do not change in different occurrences.

\section{Technical lemmas}
In this section, let us first set up two auxiliary lemmas. The auxiliary function $\rho(x)$ has many useful properties, the fundamental one is listed in Lemma \ref{N0}. This result implies, in particular, that when $x\in B(x_0,r)$ with $x_0\in\mathbb R^d$ and $r\in(0,\infty)$,
\begin{equation}\label{wangh3}
1+\frac{r}{\rho(x)}\leq C_0\cdot\bigg(1+\frac{r}{\rho(x_0)}\bigg)^{N_0+1},
\end{equation}
where $C_0\geq1$ is the constant appearing in \eqref{com}. In fact, this estimate has been obtained in the literature (see
\cite[Lemma 1]{bong8} and \cite[Lemma 2]{bong10}), for the convenience of the reader, we give its proof here. By the left-hand side of \eqref{com}, we can see that for any $x\in B(x_0,r)$ with $x_0\in\mathbb R^d$ and $r\in(0,\infty)$,
\begin{equation*}
\frac{1}{\rho(x)}\leq C_0\cdot\frac{1}{\rho(x_0)}\bigg(1+\frac{|x-x_0|}{\rho(x_0)}\bigg)^{N_0}
<C_0\cdot\frac{1}{\rho(x_0)}\bigg(1+\frac{r}{\rho(x_0)}\bigg)^{N_0}.
\end{equation*}
From this, it follows that (since $C_0\geq1$)
\begin{equation*}
1+\frac{r}{\rho(x)}\leq 1+C_0\cdot\frac{r}{\rho(x_0)}\bigg(1+\frac{r}{\rho(x_0)}\bigg)^{N_0}
\leq C_0\cdot\bigg(1+\frac{r}{\rho(x_0)}\bigg)^{N_0+1},
\end{equation*}
as desired.

Similar to the classical Muckenhoupt weights, there are some basic properties for $A^{\rho,\theta}_p(\mathbb R^d)$ classes of weights. The following important property for $A^{\rho,\theta}_p$ weights with $1\leq p<\infty$ and $0<\theta<\infty$ was first given by Bongioanni--Harboure--Salinas in \cite[Lemma 5]{bong1}.
\begin{lem}[\cite{bong1}]\label{rh}
If $\omega\in A^{\rho,\theta}_p(\mathbb R^d)$ with $0<\theta<\infty$ and $1\leq p<\infty$, then there exist positive constants $\epsilon>0,\eta>1$ and $C>0$ such that
\begin{equation}\label{rholder}
\bigg(\frac{1}{|\mathcal{Q}|}\int_{\mathcal{Q}}\omega(x)^{1+\epsilon}dx\bigg)^{\frac{1}{1+\epsilon}}
\leq C\cdot\bigg(\frac{1}{|\mathcal{Q}|}\int_{\mathcal{Q}}\omega(x)\,dx\bigg)\bigg(1+\frac{r}{\rho(x_0)}\bigg)^{\eta}
\end{equation}
holds for every cube $\mathcal{Q}=Q(x_0,r)$ in $\mathbb R^d$.
\end{lem}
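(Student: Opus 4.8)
The plan is to deduce the reverse Hölder inequality \eqref{rholder} from an $A_\infty$-type comparison of measures adapted to $\rho$, and then to run a Calderón--Zygmund self-improvement scheme in which the contribution of the \emph{large} Calderón--Zygmund cubes is quarantined into the polynomial factor $(1+r/\rho(x_0))^{\eta}$.

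\textbf{Step 1 (an $A_\infty$ comparison carrying a $\rho$-factor).} First I would show that for every cube $P=Q(z,s)$ and every measurable $E\subset P$,
\[
\frac{|E|}{|P|}\le C\Big(1+\frac{s}{\rho(z)}\Big)^{\theta}\Big(\frac{\omega(E)}{\omega(P)}\Big)^{1/p}\qquad(1<p<\infty),
\]
which is immediate from Hölder's inequality $|E|\le\omega(E)^{1/p}\big(\int_P\omega^{-p'/p}\big)^{1/p'}$ together with the definition of $A^{\rho,\theta}_p(\mathbb R^d)$. Applying this to the complement $P\setminus E$ and rearranging gives the qualitative statement I actually need: fixing $\beta\in(0,1)$, the condition $|E|\le\beta|P|$ forces $\omega(E)\le\gamma_P\,\omega(P)$ with $\gamma_P=1-(1-\beta)^{p}\big[C(1+s/\rho(z))^{\theta}\big]^{-p}<1$. (For $p=1$ the same conclusion follows directly from the $A^{\rho,\theta}_1$ condition, bounding $\omega(P\setminus E)\ge(1-\beta)|P|\,\mathrm{ess\,inf}_P\,\omega$ from below.) The decisive point is that $\gamma_P$ is bounded \emph{uniformly} away from $1$ exactly on sub-critical cubes: if $s\le\rho(z)$ then $\gamma_P\le\gamma_0:=1-(1-\beta)^p(C2^{\theta})^{-p}<1$, a constant depending only on $p,\theta$ and $[\omega]_{A^{\rho,\theta}_p}$.

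\textbf{Step 2 (a threshold level and the iteration).} Fix $\mathcal Q=Q(x_0,r)$, normalise so that $\frac{1}{|\mathcal Q|}\int_{\mathcal Q}\omega=1$, and set $R=1+r/\rho(x_0)$. For $t\ge1$ I would take the dyadic Calderón--Zygmund cubes $Q^t_j$ of $\omega$ on $\mathcal Q$ at height $t$, so that $t<|Q^t_j|^{-1}\int_{Q^t_j}\omega\le2^d t$ and $\Omega_t:=\bigcup_jQ^t_j\supset\{\omega>t\}$ up to a null set. Since $\omega(Q^t_j)\le\omega(\mathcal Q)=|\mathcal Q|$, each such cube satisfies $|Q^t_j|<|\mathcal Q|/t$, i.e. its sidelength is $s_j<r\,t^{-1/d}$; and by the left inequality in \eqref{com} its centre obeys $\rho(x_j)\ge c\,\rho(x_0)R^{-N_0}$. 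Hence for $t\ge T:=c'R^{(N_0+1)d}$ every Calderón--Zygmund cube at height $t$ is sub-critical ($s_j\le\rho(x_j)$), so Step 1 applies on it with the \emph{uniform} fraction $\gamma_0$. With $\kappa=2^d/\beta$ and the standard nesting estimate $|\Omega_{\kappa t}\cap Q^t_j|\le\beta|Q^t_j|$, Step 1 yields $\omega(\Omega_{\kappa t})\le\gamma_0\,\omega(\Omega_t)$ for all $t\ge T$, hence the geometric decay $\omega(\{\omega>\kappa^{m}T\})\le\gamma_0^{m}|\mathcal Q|$.

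\textbf{Step 3 (integrating the distribution).} Writing $\int_{\mathcal Q}\omega^{1+\epsilon}=\epsilon\int_0^\infty t^{\epsilon-1}\omega(\{\omega>t\})\,dt$ and splitting at $T$, the range $t\le T$ contributes at most $|\mathcal Q|\,T^{\epsilon}$ (using the trivial bound $\omega(\{\omega>t\})\le|\mathcal Q|$), while the range $t\ge T$ contributes at most $C_\epsilon|\mathcal Q|\,T^{\epsilon}$ once $\epsilon$ is chosen so that $\gamma_0\kappa^{\epsilon}<1$; the geometric tail then sums to $(\kappa^\epsilon-1)/(1-\gamma_0\kappa^\epsilon)$. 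Because $\gamma_0$ and $\kappa$ are \emph{universal}, one may fix $\epsilon\in\big(0,\tfrac{-\log\gamma_0}{\log\kappa}\big)$ once and for all, getting $\int_{\mathcal Q}\omega^{1+\epsilon}\le C|\mathcal Q|\,T^{\epsilon}=C|\mathcal Q|R^{(N_0+1)d\epsilon}$. Undoing the normalisation (replace $\omega$ by $\omega/\omega_{\mathcal Q}$) and taking $(1+\epsilon)$-th roots produces exactly \eqref{rholder} with $\eta=(N_0+1)d\,\epsilon/(1+\epsilon)$, enlarged if necessary so that $\eta>1$.

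\textbf{The main obstacle.} The genuine difficulty is that the effective $A_\infty$ fraction $\gamma_P$ degrades to $1$ as the cube grows, since $(1+s/\rho(z))^{\theta}\to\infty$; consequently a single global iteration cannot deliver a cube-independent exponent $\epsilon$, and the naive geometric series $\sum(\gamma_P\kappa^\epsilon)^m$ diverges for large cubes. The entire scheme therefore hinges on the observation in Step 2 that high-level Calderón--Zygmund cubes are automatically sub-critical, which confines all the ``bad'' behaviour to the levels $t\le T\approx R^{(N_0+1)d}$, where the crude bound costs only the harmless factor $T^{\epsilon}$. Making $T$ explicit through \eqref{com} and verifying that the surviving tail sum converges with one universal $\epsilon$ is the part that will require the most care.
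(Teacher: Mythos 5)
Your proposal is correct, but be aware that the paper itself contains no proof of Lemma \ref{rh}: the lemma is quoted from Bongioanni--Harboure--Salinas \cite[Lemma 5]{bong1}, and the paper only records, in the remark that follows, the shape of the constants (that $\epsilon$ comes from the classical $A_p$ reverse H\"older proof, and that one may take $\eta=\theta p+(\theta+d)\frac{pN_0}{N_0+1}+(N_0+1)\frac{d\epsilon}{1+\epsilon}$). So the relevant comparison is with that cited argument, which is genuinely different from yours: it first shows that an $A^{\rho,\theta}_p$ weight is an honest $A_p$ weight, uniformly, on \emph{sub-critical} cubes, invokes the classical reverse H\"older inequality there as a black box, and then handles a large cube by covering it with sub-critical ones, paying powers of $(1+r/\rho(x_0))$ both for the geometry of the covering (via Lemma \ref{N0}) and for comparing the average of $\omega$ over a small cube with its average over the big one (via the $A^{\rho,\theta}_p$ condition); those two losses are exactly the first two terms in the $\eta$ recorded by the paper. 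You instead run the Calder\'on--Zygmund self-improvement scheme globally on the given cube, and your pivotal observation -- that any CZ cube at height $t$ has volume less than $|\mathcal Q|/t$, hence for $t\ge T\approx R^{(N_0+1)d}$ is automatically sub-critical by \eqref{com}, so the $A_\infty$-type fraction $\gamma_0$ from your Step 1 is uniform at all high levels -- correctly confines the degradation to the range $t\le T$, where the trivial bound costs only $T^{\epsilon}$. I checked the individual steps (the duality estimate in Step 1 including $p=1$; the nesting bound $|\Omega_{\kappa t}\cap Q^t_j|\le\beta|Q^t_j|$ with $\kappa=2^d/\beta$; the geometric decay; the splitting of the distributional integral at $T$; and the renormalization $\omega\mapsto\omega/\omega_{\mathcal Q}$, which is legitimate since the $A^{\rho,\theta}_p$ characteristic is invariant under positive scalar multiples) and they are sound. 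What your route buys is self-containedness and a sharper exponent, $\eta=(N_0+1)d\epsilon/(1+\epsilon)$ (harmlessly enlarged to exceed $1$, since $1+r/\rho(x_0)\ge1$), avoiding the tiling losses in the paper's quoted constant; since a smaller exponent yields a stronger inequality, your conclusion implies \eqref{rholder} as stated. What the covering route buys is that the classical reverse H\"older theorem can be cited wholesale on sub-critical cubes, with no need to rerun the CZ iteration by hand.
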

\begin{rem}
The constant $C>0$ in Lemma \ref{rh} depends on $p,d$ and the $A^{\rho,\theta}_{p}$ characteristic constant of $\omega$, the positive number $\epsilon$ in Lemma \ref{rh} comes from the classical proof for $A_p$ weights in \cite[Theorem 7.4]{duoand}, and $\eta$ is a positive constant greater than 1, which can be chosen as follows.
\begin{equation*}
\eta:=\theta p+(\theta+d)\frac{pN_0}{N_0+1}+(N_0+1)\frac{d\epsilon}{1+\epsilon}>1.
\end{equation*}
\end{rem}
As a direct consequence of Lemma \ref{rh}, we can prove the following result, which provides us the comparison between the Lebesgue measure of the subset $E$ of $\mathbb R^d$ and its weighted measure $\omega(E)$.
\begin{lem}\label{comparelem}
If $\omega\in A^{\rho,\theta}_p(\mathbb R^d)$ with $0<\theta<\infty$ and $1\leq p<\infty$, then there exist two positive numbers $0<\delta<1$ and $\eta>1$ such that for any cube $\mathcal{Q}=Q(x_0,r)$ in $\mathbb R^d$,
\begin{equation}\label{compare}
\frac{\omega(E)}{\omega(\mathcal{Q})}\leq C\cdot\bigg(\frac{|E|}{|\mathcal{Q}|}\bigg)^\delta\bigg(1+\frac{r}{\rho(x_0)}\bigg)^{\eta}
\end{equation}
holds for any measurable subset $E$ contained in $\mathcal{Q}$, where $C>0$ is a constant which does not depend on $E$ nor on $\mathcal{Q}$, and $\eta$ is given as in Lemma \ref{rh}.
\end{lem}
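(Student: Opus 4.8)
The plan is to deduce \eqref{compare} directly from the reverse-type Hölder inequality of Lemma \ref{rh} by a single application of Hölder's inequality. Let $\epsilon>0$, $\eta>1$ and $C>0$ be the constants furnished by Lemma \ref{rh} for the weight $\omega$, and set $\delta:=\epsilon/(1+\epsilon)$, so that $0<\delta<1$ automatically. Fix a cube $\mathcal{Q}=Q(x_0,r)$ and a measurable set $E\subset\mathcal{Q}$.

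First I would apply Hölder's inequality on $E$ with the conjugate pair $1+\epsilon$ and $(1+\epsilon)'=(1+\epsilon)/\epsilon$ to the product $\omega\cdot 1$:
\[
\omega(E)=\int_E\omega(x)\,dx\leq\bigg(\int_E\omega(x)^{1+\epsilon}\,dx\bigg)^{\frac{1}{1+\epsilon}}|E|^{\frac{\epsilon}{1+\epsilon}}.
\]
Since $E\subset\mathcal{Q}$ and $\omega\geq0$, the first factor is dominated by the corresponding integral over $\mathcal{Q}$, which is precisely the quantity controlled by the left-hand side of \eqref{rholder}.

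Next I would invoke Lemma \ref{rh}: normalizing by $|\mathcal{Q}|$ and using \eqref{rholder} gives
\[
\bigg(\int_{\mathcal{Q}}\omega(x)^{1+\epsilon}\,dx\bigg)^{\frac{1}{1+\epsilon}}\leq C\,|\mathcal{Q}|^{\frac{1}{1+\epsilon}}\cdot\frac{\omega(\mathcal{Q})}{|\mathcal{Q}|}\bigg(1+\frac{r}{\rho(x_0)}\bigg)^{\eta}.
\]
Substituting this into the previous display and collecting the powers of $|\mathcal{Q}|$ (note $\tfrac{1}{1+\epsilon}-1=-\tfrac{\epsilon}{1+\epsilon}=-\delta$), I would obtain
\[
\omega(E)\leq C\,\omega(\mathcal{Q})\bigg(\frac{|E|}{|\mathcal{Q}|}\bigg)^{\delta}\bigg(1+\frac{r}{\rho(x_0)}\bigg)^{\eta}.
\]
Dividing both sides by $\omega(\mathcal{Q})>0$ then yields \eqref{compare}, with $\delta=\epsilon/(1+\epsilon)$ and the same $\eta$ as in Lemma \ref{rh}.

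There is no serious obstacle here: the entire content is the self-improving reverse Hölder inequality already established in Lemma \ref{rh}, and the only point requiring a little care is the bookkeeping of the exponents of $|\mathcal{Q}|$, which must combine so that the Lebesgue measures appear solely in the scale-invariant ratio $|E|/|\mathcal{Q}|$. The factor $(1+r/\rho(x_0))^{\eta}$ is simply carried along unchanged from \eqref{rholder}, reflecting the fact that in the Schr\"odinger setting the weight $\omega$ need not be globally doubling.
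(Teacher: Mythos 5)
Your proposal is correct and is essentially the paper's own proof: the paper likewise applies H\"older's inequality with exponents $1+\epsilon$ and $(1+\epsilon)/\epsilon$ (writing $\omega(E)=\int_{\mathcal{Q}}\chi_E\,\omega$ rather than integrating over $E$ and enlarging the domain, which is the same step) and then invokes Lemma \ref{rh}, arriving at \eqref{compare} with the identical choice $\delta=\epsilon/(1+\epsilon)$ and the same $\eta$. The exponent bookkeeping in your argument checks out, so there is nothing to correct.
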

\begin{proof}
For any given cube $\mathcal{Q}=Q(x_0,r)$ with $x_0\in\mathbb R^d$ and $r\in(0,\infty)$, suppose that $E\subset \mathcal{Q}$, then by H\"older's inequality with exponent $1+\epsilon$ and \eqref{rholder}, we can deduce that
\begin{equation*}
\begin{split}
\omega(E)&=\int_{\mathcal{Q}}\chi_E(x)\cdot \omega(x)\,dx\\
&\leq\bigg(\int_{\mathcal{Q}} \omega(x)^{1+\epsilon}dx\bigg)^{\frac{1}{1+\epsilon}}
\bigg(\int_{\mathcal{Q}}\chi_E(x)^{\frac{1+\epsilon}{\epsilon}}\,dx\bigg)^{\frac{\epsilon}{1+\epsilon}}\\
&\leq C\cdot|\mathcal{Q}|^{\frac{1}{1+\epsilon}}\bigg(\frac{1}{|\mathcal{Q}|}\int_{\mathcal{Q}}\omega(x)\,dx\bigg)
\bigg(1+\frac{r}{\rho(x_0)}\bigg)^{\eta}|E|^{\frac{\epsilon}{1+\epsilon}}\\
&=C\cdot\bigg(\frac{|E|}{|\mathcal{Q}|}\bigg)^{\frac{\epsilon}{1+\epsilon}}\bigg(1+\frac{r}{\rho(x_0)}\bigg)^{\eta}\omega(\mathcal{Q}).
\end{split}
\end{equation*}
This gives \eqref{compare} with $\delta=\epsilon/{(1+\epsilon)}$. Here the characteristic function of the set $E$ is denoted by $\chi_E$.
\end{proof}

The following result gives the relationship between these two classes of weights, $A^{\rho,\infty}_p(\mathbb R^d)$ and $A^{\rho,\infty}_{p,q}(\mathbb R^d)$, which can be found in \cite{wang} and \cite{wang3}.
\begin{lem}[\cite{wang,wang3}]\label{Apq}
Suppose that $1\leq p<q<\infty$. Then the following statements are true.
\begin{enumerate}
  \item If $p>1$ and $0<\theta<\infty$, then $\omega\in A^{\rho,\theta}_{p,q}(\mathbb R^d)$ implies that $\omega^q\in A^{\rho,\widetilde{\theta}}_t(\mathbb R^d)$ with
\begin{equation*}
t:=1+q/{p'}\quad and \quad \widetilde{\theta}:=\theta\cdot\frac{1}{1/q+1/{p'}}.
\end{equation*}
  \item If $p=1$ and $0<\theta<\infty$, then $\omega\in A^{\rho,\theta}_{1,q}(\mathbb R^d)$ implies that $\omega^q\in A^{\rho,\theta^{\ast}}_1(\mathbb R^d)$ with
  \begin{equation*}
  \theta^{\ast}:=\theta\cdot q.
  \end{equation*}
\end{enumerate}
\end{lem}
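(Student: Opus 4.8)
The plan is to observe that both implications are purely algebraic consequences of the respective defining inequalities: in each case one raises the $A^{\rho,\theta}_{p,q}$ (resp. $A^{\rho,\theta}_{1,q}$) inequality to a suitably chosen power, and the factor $(1+r/\rho(x_0))^{\theta}$ is simply carried along, its exponent being scaled by that same power. No structural property of $\rho$ beyond what is already encoded in the definitions is required, so the whole argument reduces to matching the exponents appearing in the weight integrals, ball by ball, and then taking suprema.

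For part (1), I would first record the two exponent identities forced by the choice $t=1+q/p'$. Since $t'=t/(t-1)$, one has $t'/t=1/(t-1)=p'/q$, which is exactly the relation making
\begin{equation*}
(\omega^{q})^{-t'/t}=\omega^{-q\cdot p'/q}=\omega^{-p'}.
\end{equation*}
Hence, writing $U:=\frac{1}{|B|}\int_B\omega^{q}\,dx$ and $W:=\frac{1}{|B|}\int_B\omega^{-p'}\,dx$, the $A^{\rho,\widetilde\theta}_t$ quantity for $\omega^{q}$ is precisely $U^{1/t}W^{1/t'}$. Next, setting $s:=1/(1/q+1/p')$, a direct check gives $s/q=1/t$ and $s/p'=1/t'$, whence
\begin{equation*}
U^{1/t}W^{1/t'}=\bigl(U^{1/q}W^{1/p'}\bigr)^{s}.
\end{equation*}
Raising the defining inequality $U^{1/q}W^{1/p'}\le[\omega]_{A^{\rho,\theta}_{p,q}}(1+r/\rho(x_0))^{\theta}$ to the power $s$ then yields $U^{1/t}W^{1/t'}\le[\omega]_{A^{\rho,\theta}_{p,q}}^{\,s}(1+r/\rho(x_0))^{\theta s}$, which is the $A^{\rho,\widetilde\theta}_t$ condition for $\omega^{q}$ with $\widetilde\theta=\theta s=\theta/(1/q+1/p')$. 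Taking the supremum over all balls $B=B(x_0,r)$ finishes part (1).

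For part (2) the argument is shorter. The defining inequality of $A^{\rho,\theta}_{1,q}$ reads
\begin{equation*}
\Bigl(\frac{1}{|B|}\int_B\omega^{q}\,dx\Bigr)^{1/q}\le C\Bigl(1+\frac{r}{\rho(x_0)}\Bigr)^{\theta}\underset{x\in B}{\mathrm{ess\,inf}}\,\omega(x).
\end{equation*}
Raising both sides to the power $q$, and using that $x\mapsto x^{q}$ is increasing on $[0,\infty)$ so that $\bigl(\underset{x\in B}{\mathrm{ess\,inf}}\,\omega\bigr)^{q}=\underset{x\in B}{\mathrm{ess\,inf}}\,\omega^{q}$, gives
\begin{equation*}
\frac{1}{|B|}\int_B\omega^{q}\,dx\le C^{q}\Bigl(1+\frac{r}{\rho(x_0)}\Bigr)^{\theta q}\underset{x\in B}{\mathrm{ess\,inf}}\,\omega(x)^{q},
\end{equation*}
which is exactly the $A^{\rho,\theta^{\ast}}_1$ condition for $\omega^{q}$ with $\theta^{\ast}=\theta q$.

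Since the mechanics are elementary, the only point requiring care---the one I would treat as the ``main obstacle''---is the verification of the exponent identities $t'/t=p'/q$ and $s/q=1/t$, $s/p'=1/t'$ in part (1); a slip here would misidentify either the exponent $t$ or the level $\widetilde\theta$. Everything else is routine: the passage from a fixed ball to the supremum defining the characteristic constant is immediate because the same scalar power $s$ (resp. $q$) is applied uniformly in $B$, and one should only note in passing that $t=1+q/p'>1$ and $\widetilde\theta,\theta^{\ast}>0$, so the target classes are genuinely well defined.
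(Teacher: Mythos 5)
Your proof is correct: the exponent identities $t'/t=p'/q$, $s/q=1/t$, $s/p'=1/t'$ all check out, and raising the defining inequalities to the powers $s$ (resp.\ $q$) does yield exactly the $A^{\rho,\widetilde{\theta}}_t$ condition for $\omega^q$ with $\widetilde{\theta}=\theta s$ (resp.\ the $A^{\rho,\theta q}_1$ condition). Note that the paper itself offers no proof of this lemma---it is quoted as a known result from the references \cite{wang} and \cite{wang3}---and your purely algebraic verification is precisely the standard argument behind those citations, mirroring the classical Muckenhoupt--Wheeden equivalence between $A_{p,q}$ and $A_{1+q/p'}$, with the factor $(1+r/\rho(x_0))^{\theta}$ carried along and its exponent rescaled.
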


\section{Known results}
In this section, we will present some relevant results concerning characterizations of several function spaces in the literature. As we mentioned in the introduction, the celebrated John--Nirenberg inequality states that if $f\in\mathrm{BMO}(\mathbb R^d)$, then for any cube $Q$ in $\mathbb R^d$ and for any $\lambda>0$,
\begin{equation*}
\Big|\Big\{x\in Q:|f(x)-f_{Q}|>\lambda\Big\}\Big|
\leq C_1|Q|\exp\bigg\{-\frac{C_2\lambda}{\|f\|_{\mathrm{BMO}}}\bigg\},
\end{equation*}
where $C_1>0$ and $C_2>0$ are two universal constants (see \cite{john} and \cite{duoand}). The John--Nirenberg inequality shows that any BMO function is exponentially integrable. As a consequence of this estimate and H\"{o}lder's inequality, we can obtain an equivalent norm on $\mathrm{BMO}(\mathbb R^d)$, see \cite[Corollary 6.12]{duoand}, for example.
\begin{prop}[\cite{duoand}]
For $1\leq s<\infty$, define
\begin{equation*}
\|f\|_{\mathrm{BMO}^s}:=\sup_{Q\subset\mathbb R^d}\bigg(\frac{1}{|Q|}\int_{Q}|f(x)-f_{Q}|^s\,dx\bigg)^{1/s}.
\end{equation*}
Then we have~(when $s=1$, we write $\|\cdot\|_{\mathrm{BMO}^s}=\|\cdot\|_{\mathrm{BMO}}$)
\begin{equation*}
\|f\|_{\mathrm{BMO}^s}\approx\|f\|_{\mathrm{BMO}},
\end{equation*}
for each $1<s<\infty$.
\end{prop}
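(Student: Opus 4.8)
The plan is to establish the equivalence by proving the two inequalities $\|f\|_{\mathrm{BMO}} \le \|f\|_{\mathrm{BMO}^s}$ and $\|f\|_{\mathrm{BMO}^s} \le C\|f\|_{\mathrm{BMO}}$ separately, the second of which is where all the analytic content sits. The first inequality is immediate and requires no deep input: since $t \mapsto t^s$ is convex for $1 \le s < \infty$, Jensen's inequality (equivalently, H\"older with exponents $s$ and $s'$) gives, for every cube $Q$,
\[
\frac{1}{|Q|}\int_Q |f(x)-f_Q|\,dx \le \bigg(\frac{1}{|Q|}\int_Q |f(x)-f_Q|^s\,dx\bigg)^{1/s},
\]
and taking the supremum over all cubes yields $\|f\|_{\mathrm{BMO}} \le \|f\|_{\mathrm{BMO}^s}$.

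For the reverse inequality I would fix a cube $Q$ and rewrite the $L^s$-average by means of the layer-cake (distribution function) formula, so that
\[
\int_Q |f(x)-f_Q|^s\,dx = s\int_0^\infty \lambda^{s-1}\,\big|\{x\in Q : |f(x)-f_Q|>\lambda\}\big|\,d\lambda .
\]
The decisive step is to control the distribution function appearing inside the integral by the classical John--Nirenberg inequality recalled above, namely the bound $|\{x\in Q : |f(x)-f_Q|>\lambda\}| \le C_1 |Q|\exp\big(-C_2\lambda/\|f\|_{\mathrm{BMO}}\big)$ with universal constants $C_1,C_2>0$. Inserting this estimate into the layer-cake identity and factoring $|Q|$ out reduces the problem to evaluating the one-dimensional integral $\int_0^\infty \lambda^{s-1} e^{-C_2\lambda/\|f\|_{\mathrm{BMO}}}\,d\lambda$.

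This last integral is a Gamma integral: the substitution $u = C_2\lambda/\|f\|_{\mathrm{BMO}}$ turns it into $(\|f\|_{\mathrm{BMO}}/C_2)^{s}\,\Gamma(s)$, whence
\[
\frac{1}{|Q|}\int_Q |f(x)-f_Q|^s\,dx \le s\,C_1\,\Gamma(s)\,C_2^{-s}\,\|f\|_{\mathrm{BMO}}^{\,s}.
\]
Taking the $s$-th root and then the supremum over all cubes produces $\|f\|_{\mathrm{BMO}^s} \le C\|f\|_{\mathrm{BMO}}$ with $C = \big(s\,C_1\,\Gamma(s)\big)^{1/s}/C_2$ depending only on $s$ and $d$, and combining the two directions gives the claimed equivalence. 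I do not anticipate any genuine obstacle: the entire force of the argument is carried by the John--Nirenberg inequality, and the remaining work is the standard layer-cake computation together with the Gamma-function evaluation. The only point meriting mild care is to verify that the resulting constant, though it grows with $s$ through the factor $\Gamma(s)^{1/s}$, remains finite for each fixed $s\in(1,\infty)$, which is exactly why the stated comparability is asserted only for such $s$.
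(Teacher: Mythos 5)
Your proposal is correct and follows exactly the route the paper indicates for this result (which it cites from Duoandikoetxea as Corollary 6.12 rather than proving in detail): H\"older/Jensen for the easy direction, and the John--Nirenberg inequality combined with the layer-cake formula and a Gamma-integral evaluation for the substantive direction. The computation and the resulting constant $\big(sC_1\Gamma(s)\big)^{1/s}/C_2$ are both right, so there is nothing to correct.
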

Now we define
\begin{equation*}
\mathrm{BMO}^s(\mathbb R^d):=\Big\{f\in L^1_{\mathrm{loc}}(\mathbb R^d):\|f\|_{\mathrm{BMO}^s}<\infty\Big\},\quad 1\leq s<\infty.
\end{equation*}
This result tells us that for all $1\leq s<\infty$, the spaces $\mathrm{BMO}^s(\mathbb R^d)$ coincide, and the norms $\|\cdot\|_{\mathrm{BMO}^s}$ are mutually equivalent with respect to different values of $s$.

We can extend this result to the weighted case. For each $\omega\in A_{\infty}(\mathbb R^d):=\cup_{1\leq p<\infty}A_p(\mathbb R^d)$, we denote by $\mathrm{BMO}({\omega})$ the set of all locally integrable functions $f$ on $\mathbb R^d$ such that
\begin{equation*}
\|f\|_{\mathrm{BMO}(\omega)}:=\sup_{Q\subset\mathbb R^d}\frac{1}{\omega(Q)}\int_{Q}|f(x)-f_{\omega,Q}|\omega(x)\,dx<\infty,
\end{equation*}
where
\begin{equation*}
f_{\omega,Q}:=\frac{1}{\omega(Q)}\int_{Q}f(x)\omega(x)\,dx.
\end{equation*}

In 1976, Muckenhoupt and Wheeden proved that a function $f$ is in the space $\mathrm{BMO}(\mathbb R^d)$ if and only if $f$ is in the space $\mathrm{BMO}({\omega})$ (bounded mean oscillation with respect to $\omega$), provided that $\omega\in A_{\infty}(\mathbb R^d)$, one can see \cite[Theorem 5]{muck2}.
\begin{prop}[\cite{muck2}]
For each $\omega\in A_{\infty}(\mathbb R^d)$, then we have $\mathrm{BMO}(\mathbb R^d)=\mathrm{BMO}({\omega})$ and (the norms are mutually equivalent)
\begin{equation*}
\|f\|_{\mathrm{BMO}(\omega)}\approx\|f\|_{\mathrm{BMO}}.
\end{equation*}
\end{prop}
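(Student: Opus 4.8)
The plan is to prove the two-sided norm equivalence by establishing the two inclusions separately, in each case converting the oscillation measured against one measure into the oscillation measured against the other by combining a John--Nirenberg type exponential decay estimate with the quantitative comparison between $dx$ and $\omega\,dx$ furnished by the $A_\infty$ condition. Throughout, write $\|f\|_{\ast}:=\|f\|_{\mathrm{BMO}(\omega)}$ and fix an exponent $p\in[1,\infty)$ with $\omega\in A_p(\mathbb R^d)$, which exists since $\omega\in A_\infty(\mathbb R^d)=\bigcup_{1\le p<\infty}A_p(\mathbb R^d)$.

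For the inclusion $\mathrm{BMO}(\mathbb R^d)\subset\mathrm{BMO}(\omega)$, fix a cube $Q$ and $f\in\mathrm{BMO}(\mathbb R^d)$. Since $f_{\omega,Q}$ may be replaced by any constant at the cost of a factor $2$, it suffices to bound $\frac{1}{\omega(Q)}\int_Q|f-f_Q|\,\omega\,dx$. I would rewrite this integral through the layer-cake formula as $\int_0^\infty\omega(\{x\in Q:|f(x)-f_Q|>\lambda\})\,d\lambda$, insert the classical $A_\infty$ comparison $\omega(E)\le C\,\omega(Q)(|E|/|Q|)^{\delta}$ valid for all measurable $E\subset Q$ (the unweighted, $\theta=0$ analogue of Lemma \ref{comparelem}, obtained from the reverse Hölder inequality for $A_\infty$ weights), and then feed in the classical John--Nirenberg inequality $|\{x\in Q:|f-f_Q|>\lambda\}|\le C_1|Q|e^{-C_2\lambda/\|f\|_{\mathrm{BMO}}}$. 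This yields $\omega(\{\dots\})\le C\,\omega(Q)\,e^{-C_2\delta\lambda/\|f\|_{\mathrm{BMO}}}$, which is integrable in $\lambda$ and produces $\|f\|_{\ast}\le C\|f\|_{\mathrm{BMO}}$ after dividing by $\omega(Q)$.

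For the reverse inclusion $\mathrm{BMO}(\omega)\subset\mathrm{BMO}(\mathbb R^d)$, fix $f\in\mathrm{BMO}(\omega)$ and a cube $Q$; again it suffices to estimate $\frac{1}{|Q|}\int_Q|f-f_{\omega,Q}|\,dx$. The crucial new ingredient here is the weighted John--Nirenberg inequality $\omega(\{x\in Q:|f-f_{\omega,Q}|>\lambda\})\le C_1\,\omega(Q)\,e^{-C_2\lambda/\|f\|_{\ast}}$, which holds because $\omega\in A_\infty$ forces $\omega\,dx$ to be a doubling measure, so that $(\mathbb R^d,|\cdot|,\omega\,dx)$ is a space of homogeneous type and the John--Nirenberg argument (a Calderón--Zygmund decomposition relative to $\omega\,dx$) goes through verbatim. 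In particular this gives the $L^p(\omega)$ self-improvement $\big(\frac{1}{\omega(Q)}\int_Q|f-f_{\omega,Q}|^p\omega\,dx\big)^{1/p}\le C_p\|f\|_{\ast}$. I would then pass from $\omega\,dx$ back to $dx$ by Hölder's inequality with exponents $p,p'$, writing $1=\omega^{1/p}\omega^{-1/p}$, and control the surviving factor $\big(\frac{1}{|Q|}\int_Q\omega^{1-p'}\,dx\big)^{1/p'}$ by the $A_p$ condition; this produces $\frac{1}{|Q|}\int_Q|f-f_{\omega,Q}|\,dx\le C_p[\omega]_{A_p}^{1/p}\|f\|_{\ast}$ and hence $\|f\|_{\mathrm{BMO}}\le C\|f\|_{\ast}$.

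The main obstacle is the reverse direction, and specifically the weighted John--Nirenberg inequality: a naive Chebyshev estimate $\omega(\{|f-f_{\omega,Q}|>\lambda\})\le\|f\|_{\ast}\omega(Q)/\lambda$ combined with the reverse $A_\infty$ comparison $|E|/|Q|\le C(\omega(E)/\omega(Q))^{1/p}$ only gives polynomial decay $|\{\dots\}|\lesssim|Q|(\|f\|_{\ast}/\lambda)^{1/p}$, whose tail fails to be integrable in $\lambda$, so the genuine exponential decay is indispensable. Once the weighted John--Nirenberg inequality is available, the remainder is the routine assembly of the layer-cake and Hölder estimates with the quantitative $A_\infty$ (equivalently $A_p$) comparison inequalities, and the two resulting constants combine to give the asserted equivalence $\|f\|_{\mathrm{BMO}(\omega)}\approx\|f\|_{\mathrm{BMO}}$.
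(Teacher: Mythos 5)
Your proposal is correct, but there is no proof in the paper to compare it against: the paper records this proposition in its ``Known results'' section purely as a citation to Muckenhoupt and Wheeden \cite[Theorem 5]{muck2}, so you have in effect supplied the missing argument. Judged on its own terms, your proof is the standard one and both halves are sound. Your forward inclusion is precisely the technique the paper itself uses for part (1) of Theorem \ref{mainthm1}: a layer-cake representation, the comparison $\omega(E)\le C\,\omega(Q)\,(|E|/|Q|)^{\delta}$ for $E\subset Q$ (the $\theta=0$ analogue of Lemma \ref{comparelem}, obtained from reverse H\"{o}lder), and the John--Nirenberg inequality of \cite{john} (the unweighted analogue of Lemma \ref{expblo}). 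Your reverse inclusion uses the same H\"{o}lder splitting $1=\omega^{1/p}\cdot\omega^{-1/p}$ and $A_p$ computation as part (2) of Theorem \ref{mainthm1}; the essential difference, which you correctly isolate, is that there the weighted $L^p$ bound is a hypothesis, whereas here it must be manufactured from the $L^1(\omega)$ definition of $\mathrm{BMO}(\omega)$, and the weighted John--Nirenberg inequality for the doubling measure $\omega\,dx$ is exactly the self-improvement device that does this; your observation that Chebyshev plus the reverse comparison $|E|/|Q|\le C(\omega(E)/\omega(Q))^{1/p}$ gives only non-integrable polynomial decay is accurate and explains why this ingredient cannot be dispensed with. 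Two small caveats. First, ``goes through verbatim'' mildly understates the role of doubling: in the stopping-time argument the factor $2^d$ relating a dyadic cube to its parent $\widehat{Q}$ must be replaced by the bound $\omega(\widehat{Q})/\omega(Q)\le C$, and that is precisely where the doubling property of $\omega\,dx$ is consumed. Second, when $p=1$ your H\"{o}lder step degenerates ($p'=\infty$), but then the conclusion is immediate from $\int_Q|f-f_{\omega,Q}|\,dx\le\big(\underset{x\in Q}{\mathrm{ess\,inf}}\,\omega(x)\big)^{-1}\int_Q|f-f_{\omega,Q}|\,\omega(x)\,dx\le[\omega]_{A_1}\,|Q|\,\|f\|_{\mathrm{BMO}(\omega)}$, so no generality is lost.
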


In 2011, Ho further proved the following result by using H\"older's inequality, the John--Nirenberg inequality and relevant properties of $A_{\infty}$ weights, see \cite[Theorem 3.1]{ho}.
\begin{prop}[\cite{ho}]\label{hojian}
For all $1\leq s<\infty$ and $\omega\in A_{s}(\mathbb R^d)$, then $f\in \mathrm{BMO}(\mathbb R^d)$ if and only if
\begin{equation*}
\sup_{Q\subset\mathbb R^d}\bigg(\frac{1}{\omega(Q)}\int_{Q}|f(x)-f_{Q}|^s\omega(x)\,dx\bigg)^{1/s}<\infty.
\end{equation*}
\end{prop}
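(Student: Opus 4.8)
The plan is to prove the two implications separately, exploiting two complementary features of $A_s$ weights: the quantitative $A_\infty$ comparison property for the direction $f\in\mathrm{BMO}\Rightarrow$ (weighted bound), and the defining $A_s$ inequality together with H\"older's inequality for the converse. Throughout write $K:=\sup_{Q}\big(\frac{1}{\omega(Q)}\int_Q|f(x)-f_Q|^s\omega(x)\,dx\big)^{1/s}$ for the weighted quantity in question, and note that both the target quantity $\|f\|_{\mathrm{BMO}}$ and $K$ are built from the \emph{same} unweighted average $f_Q$, so no passage between weighted and unweighted means is needed.

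For the forward direction, assume $f\in\mathrm{BMO}(\mathbb R^d)$. First I would recall that $A_s(\mathbb R^d)\subset A_\infty(\mathbb R^d)$, so $\omega$ satisfies the classical $A_\infty$ comparison estimate (the unweighted analogue of Lemma \ref{comparelem}, a consequence of the reverse H\"older inequality for $A_\infty$ weights): there exist $0<\delta<1$ and $C>0$ with $\omega(E)/\omega(Q)\leq C(|E|/|Q|)^{\delta}$ for every cube $Q$ and every measurable $E\subset Q$. Next I would write the weighted integral through its distribution function, $\int_Q|f-f_Q|^s\omega\,dx=s\int_0^{\infty}\lambda^{s-1}\omega(E_\lambda)\,d\lambda$ with $E_\lambda:=\{x\in Q:|f(x)-f_Q|>\lambda\}$. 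Applying the comparison estimate to $E_\lambda$ and then the John--Nirenberg inequality $|E_\lambda|\leq C_1|Q|\exp\{-C_2\lambda/\|f\|_{\mathrm{BMO}}\}$ yields $\omega(E_\lambda)\leq C\,\omega(Q)\exp\{-C_2\delta\lambda/\|f\|_{\mathrm{BMO}}\}$. Substituting and evaluating the resulting $\Gamma$-type integral gives $\frac{1}{\omega(Q)}\int_Q|f-f_Q|^s\omega\,dx\leq C\,\|f\|_{\mathrm{BMO}}^s$ uniformly in $Q$, hence $K\leq C^{1/s}\|f\|_{\mathrm{BMO}}<\infty$.

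For the converse, assume $K<\infty$ and bound the ordinary mean oscillation directly. For $s>1$ I would split $|f-f_Q|=\big(|f-f_Q|\,\omega^{1/s}\big)\,\omega^{-1/s}$ and apply H\"older's inequality with exponents $s$ and $s'$, obtaining $\int_Q|f-f_Q|\,dx\leq\big(\int_Q|f-f_Q|^s\omega\big)^{1/s}\big(\int_Q\omega^{-s'/s}\big)^{1/s'}$, where the first factor is at most $K\,\omega(Q)^{1/s}$. Since $s'/s=1/(s-1)$, the defining $A_s$ inequality controls $\int_Q\omega^{-1/(s-1)}\,dx$, and a short computation shows $\big(\int_Q\omega^{-s'/s}\,dx\big)^{1/s'}\leq C\,|Q|\,\omega(Q)^{-1/s}$; the two $\omega(Q)$ powers cancel, leaving $\frac{1}{|Q|}\int_Q|f-f_Q|\,dx\leq C\,K$, i.e. $f\in\mathrm{BMO}(\mathbb R^d)$. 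The endpoint $s=1$ is even simpler: $\omega\in A_1$ forces $\underset{x\in Q}{\mathrm{ess\,inf}}\,\omega(x)\geq c\,\omega(Q)/|Q|$, so $1\leq C\,(|Q|/\omega(Q))\,\omega(x)$ for a.e.\ $x\in Q$; inserting this into $\int_Q|f-f_Q|\,dx$ and using $\frac{1}{\omega(Q)}\int_Q|f-f_Q|\,\omega\leq K$ gives the same conclusion.

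The main obstacle is confined to the forward direction, and specifically to securing the quantitative $A_\infty$ comparison inequality with a \emph{uniform} exponent $\delta$ valid for all cubes; once that reverse-H\"older-type gain is available, the exponential decay supplied by John--Nirenberg makes the distribution-function integral converge and the remaining estimate is routine. The converse direction is a clean pairing of H\"older's inequality with the $A_s$ definition, the only mild care being the separate and easier treatment of $s=1$.
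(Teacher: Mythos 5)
Your proposal is correct and follows essentially the same route as the proof the paper refers to: the paper attributes this result to Ho (Theorem 3.1 of \cite{ho}), proved ``by using H\"older's inequality, the John--Nirenberg inequality and relevant properties of $A_{\infty}$ weights,'' which are exactly your ingredients, and it is also the same strategy the paper itself deploys for its Schr\"odinger-setting analogues (Theorem \ref{mainthm1}): John--Nirenberg plus the measure-comparison estimate of Lemma \ref{comparelem} for the forward direction, and H\"older's inequality with the defining weight condition (treating $p=1$ separately via the essential infimum) for the converse. No gaps: the distribution-function computation, the cancellation of the $\omega(Q)^{1/s}$ factors, and the $s=1$ endpoint are all handled correctly.
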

By using similar arguments, we can prove a version of John--Nirenberg inequality suitable for the BLO spaces
(see, for instance, \cite[Lemma 2.1]{wangding}).
\begin{equation}\label{bloineq}
\Big|\Big\{x\in \mathcal{Q}:\Big[f(x)-\underset{y\in\mathcal{Q}}{\mathrm{ess\,inf}}\,f(y)\Big]>\lambda\Big\}\Big|\leq
C_3|\mathcal{Q}|\exp\bigg\{-\frac{C_4\lambda}{\|f\|_{\mathrm{BLO}}}\bigg\}.
\end{equation}
Here $C_3$ and $C_4$ are two absolute constants. There is an analogue of Proposition \ref{hojian} for the space $\mathrm{BLO}(\mathbb R^d)$. Based on the estimate \eqref{bloineq}, we further obtain the following result.
\begin{prop}
For all $1\leq s<\infty$ and $\omega\in A_{s}(\mathbb R^d)$, then $f\in \mathrm{BLO}(\mathbb R^d)$ if and only if
\begin{equation*}
\sup_{Q\subset\mathbb R^d}\bigg(\frac{1}{\omega(Q)}\int_{Q}\Big[f(x)-\underset{y\in\mathcal{Q}}{\mathrm{ess\,inf}}\,f(y)\Big]^s\omega(x)\,dx\bigg)^{1/s}<\infty.
\end{equation*}
\end{prop}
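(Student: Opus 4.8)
The plan is to mirror Ho's argument for $\mathrm{BMO}$ (Proposition \ref{hojian}), with the oscillation $|f-f_Q|$ replaced by the nonnegative quantity $g_Q(x):=f(x)-\underset{y\in\mathcal{Q}}{\mathrm{ess\,inf}}\,f(y)$ and the classical John--Nirenberg inequality replaced by its BLO analogue \eqref{bloineq}. Write $A$ for the supremum appearing in the statement, fix a cube $\mathcal{Q}=Q$, and set $E_\lambda:=\{x\in Q:g_Q(x)>\lambda\}$. Since $\omega\in A_s(\mathbb R^d)\subset A_\infty(\mathbb R^d)$, I will freely use the classical comparison between Lebesgue and weighted measure (the $\theta=0$ analogue of Lemma \ref{comparelem}, see \cite{duoand}): there exist $C,\delta>0$ such that $\omega(E)/\omega(Q)\leq C(|E|/|Q|)^{\delta}$ for every measurable $E\subset Q$.

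For the forward implication (if $f\in\mathrm{BLO}(\mathbb R^d)$ then the supremum is finite), I would begin from the layer-cake identity
\begin{equation*}
\int_Q g_Q(x)^s\,\omega(x)\,dx=s\int_0^\infty\lambda^{s-1}\,\omega(E_\lambda)\,d\lambda.
\end{equation*}
Combining the $A_\infty$ comparison with \eqref{bloineq} gives $\omega(E_\lambda)\leq C\,\omega(Q)\exp\{-C_4\delta\lambda/\|f\|_{\mathrm{BLO}}\}$, and substituting this into the integral produces a Gamma integral equal to a constant multiple of $\|f\|_{\mathrm{BLO}}^{\,s}\,\omega(Q)$. Dividing by $\omega(Q)$ and taking $s$-th roots yields the bound $A\leq C\|f\|_{\mathrm{BLO}}$. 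This step is uniform in $s\geq1$ (the case $s=1$ simply has $\lambda^0$).

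For the reverse implication I would split into two cases. When $s>1$, apply Hölder's inequality with exponents $s$ and $s'$ to $\int_Q g_Q\,dx=\int_Q g_Q\,\omega^{1/s}\cdot\omega^{-1/s}\,dx$, and use the hypothesis $\big(\int_Q g_Q^s\,\omega\big)^{1/s}\leq A\,\omega(Q)^{1/s}$ to obtain
\begin{equation*}
\frac{1}{|Q|}\int_Q g_Q\,dx\leq A\,\bigg(\frac{1}{|Q|}\int_Q\omega\,dx\bigg)^{1/s}\bigg(\frac{1}{|Q|}\int_Q\omega^{-1/(s-1)}\,dx\bigg)^{1/s'},
\end{equation*}
where the volume factors collapse because $1/s+1/s'=1$ and $\omega^{-s'/s}=\omega^{-1/(s-1)}$. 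The right-hand side is exactly $A\,[\omega]_{A_s}^{1/s}$, so $\|f\|_{\mathrm{BLO}}\leq[\omega]_{A_s}^{1/s}\,A<\infty$. When $s=1$, I would instead use $\omega(x)\geq\underset{x\in Q}{\mathrm{ess\,inf}}\,\omega(x)$ a.e.\ on $Q$ together with $g_Q\geq0$ to get $\int_Q g_Q\,dx\leq(\underset{x\in Q}{\mathrm{ess\,inf}}\,\omega)^{-1}\int_Q g_Q\,\omega\,dx$, and then the $A_1$ condition $\frac{1}{|Q|}\int_Q\omega\leq[\omega]_{A_1}\,\underset{x\in Q}{\mathrm{ess\,inf}}\,\omega$ gives $\frac{1}{|Q|}\int_Q g_Q\,dx\leq[\omega]_{A_1}\,A$, so $f\in\mathrm{BLO}(\mathbb R^d)$.

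The argument is essentially routine once the BLO John--Nirenberg inequality \eqref{bloineq} is available; the only point requiring care is that everything is phrased through the nonnegative function $g_Q$ rather than $|f-f_Q|$. This nonnegativity is precisely what makes the $s=1$ comparison $\int_Q g_Q\,dx\leq(\underset{x\in Q}{\mathrm{ess\,inf}}\,\omega)^{-1}\int_Q g_Q\,\omega\,dx$ valid and what lets the layer-cake computation run without absolute values, so I expect no genuine obstacle beyond careful bookkeeping of the constants.
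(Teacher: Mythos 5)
Your proof is correct and follows essentially the same route the paper indicates and uses: the forward direction via the BLO John--Nirenberg inequality \eqref{bloineq} combined with the $A_\infty$ comparison of Lebesgue and weighted measures and a layer-cake/Gamma-integral computation, and the converse via H\"older's inequality with the $A_s$ condition (split into the cases $s>1$ and $s=1$), exactly as in the paper's proofs of Theorems \ref{mainthm1} and \ref{mainthm2} specialized to $\theta=0$. No gaps; the bookkeeping of constants and the treatment of the nonnegative quantity $g_Q$ are handled correctly.
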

This result was first given by Wang--Zhou--Teng in 2018 (to the best of our knowledge), see \cite[Theorem 3.1]{wangding}.

Moreover, there are many works about the characterizations of classical BMO and BLO spaces, one can see \cite{coi2,coifman,kom1,lu,shirai} and the references therein. For the boundedness properties of some operators in BMO and BLO spaces, see \cite{meng1,meng2,ou}. On the other hand, we have the following characterization of classical Campanato spaces, which can be found in \cite[Lemma 1.5]{pal} and \cite[Theorem 2]{janson}.
\begin{prop}[\cite{janson,pal}]\label{camwangchen}
For $1\leq s<\infty$ and $0<\beta<1$, define
\begin{equation*}
\|f\|_{{\mathcal{C}}^{\beta,s}}:=\sup_{B\subset\mathbb R^d}\frac{1}{|B|^{\beta/d}}\bigg(\frac{1}{|B|}\int_{B}|f(x)-f_{B}|^s\,dx\bigg)^{1/s},
\end{equation*}
and
\begin{equation*}
\|f\|_{\mathrm{Lip}_{\beta}}:=\sup_{x,y\in \mathbb R^d,x\neq y}\frac{|f(x)-f(y)|}{|x-y|^{\beta}}.
\end{equation*}
Then we have (when $s=1$, we denote $\|\cdot\|_{\mathcal{C}^{\beta,s}}=\|\cdot\|_{\mathcal{C}^{\beta}}$)
\begin{equation*}
\|f\|_{\mathcal{C}^{\beta,s}}\approx\|f\|_{\mathcal{C}^{\beta}}\approx\|f\|_{\mathrm{Lip}_{\beta}},
\end{equation*}
for each $1<s<\infty$.
\end{prop}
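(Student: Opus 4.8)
The strategy is to prove the chain of inequalities $\|f\|_{\mathcal{C}^\beta}\le\|f\|_{\mathcal{C}^{\beta,s}}\le C\|f\|_{\mathrm{Lip}_\beta}\le C\|f\|_{\mathcal{C}^\beta}$, from which all three equivalences follow at once. The first inequality is immediate from Jensen's (equivalently H\"older's) inequality with exponent $s>1$: for any ball $B$ one has $\frac{1}{|B|}\int_B|f-f_B|\,dx\le\big(\frac{1}{|B|}\int_B|f-f_B|^s\,dx\big)^{1/s}$, and dividing by $|B|^{\beta/d}$ and taking the supremum over $B$ gives $\|f\|_{\mathcal{C}^\beta}\le\|f\|_{\mathcal{C}^{\beta,s}}$. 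For the second inequality I would argue directly: if $f\in\mathrm{Lip}_\beta$, then for $x\in B=B(x_0,r)$, writing $f(x)-f_B=\frac{1}{|B|}\int_B(f(x)-f(y))\,dy$ and using $|x-y|\le 2r$ gives $|f(x)-f_B|\le\|f\|_{\mathrm{Lip}_\beta}(2r)^\beta$; since $|B|^{\beta/d}\approx r^\beta$, raising to the $s$-th power, averaging over $B$, and dividing by $|B|^{\beta/d}$ yields $\|f\|_{\mathcal{C}^{\beta,s}}\le C\|f\|_{\mathrm{Lip}_\beta}$.

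The essential point is the third inequality $\|f\|_{\mathrm{Lip}_\beta}\le C\|f\|_{\mathcal{C}^\beta}$, which recovers pointwise H\"older continuity from the integral oscillation condition. Fix $x_0$ and $r>0$ and set $B_k=B(x_0,r/2^k)$ for $k\ge0$. Since $B_{k+1}\subset B_k$ and $|B_k|/|B_{k+1}|=2^d$, one estimates $|f_{B_{k+1}}-f_{B_k}|\le\frac{1}{|B_{k+1}|}\int_{B_{k+1}}|f-f_{B_k}|\,dx\le 2^d\cdot\frac{1}{|B_k|}\int_{B_k}|f-f_{B_k}|\,dx\le C\|f\|_{\mathcal{C}^\beta}(r/2^k)^\beta$. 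Because $\beta>0$, the series $\sum_k (r/2^k)^\beta=r^\beta\sum_k 2^{-k\beta}$ converges, so $\{f_{B_k}\}$ is Cauchy; summing the telescoping bound gives the pointwise control $|f(x_0)-f_{B(x_0,r)}|\le C\|f\|_{\mathcal{C}^\beta}r^\beta$ at every Lebesgue point $x_0$ of $f$, where the limit of $f_{B_k}$ equals $f(x_0)$ by the Lebesgue differentiation theorem.

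To pass from this to the Lipschitz bound, I would take two distinct points $x,y$, set $r=|x-y|$, and use the common ball $B=B(x,2r)$, which contains both $B(x,r)$ and $B(y,r)$. The triangle inequality gives $|f(x)-f(y)|\le|f(x)-f_B|+|f_B-f_{B(y,r)}|+|f_{B(y,r)}-f(y)|$; the first and third terms are bounded by $C\|f\|_{\mathcal{C}^\beta}r^\beta$ via the telescoping estimate above (centered at $x$ with radius $2r$, and at $y$ with radius $r$, respectively), while the middle term is controlled by comparing averages over the nested pair $B(y,r)\subset B$ as in the previous step, again giving $C\|f\|_{\mathcal{C}^\beta}r^\beta$. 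Summing yields $|f(x)-f(y)|\le C\|f\|_{\mathcal{C}^\beta}|x-y|^\beta$ for a.e.\ $x,y$, and after identifying $f$ with its continuous representative this is the desired bound $\|f\|_{\mathrm{Lip}_\beta}\le C\|f\|_{\mathcal{C}^\beta}$. The main obstacle is precisely this direction: one must verify that the averages $f_{B_k}$ genuinely converge to the pointwise value of a representative of $f$, which relies on $\beta>0$ for summability of the geometric series and on the Lebesgue differentiation theorem to identify the limit; the restriction $\beta<1$ guarantees that we remain in the genuine H\"older regime, so that $\mathrm{Lip}_\beta$ is the correct target space.
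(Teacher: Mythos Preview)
Your argument is correct and follows the standard route to this classical result: H\"older's inequality for $\|f\|_{\mathcal{C}^\beta}\le\|f\|_{\mathcal{C}^{\beta,s}}$, the trivial pointwise bound for $\|f\|_{\mathcal{C}^{\beta,s}}\le C\|f\|_{\mathrm{Lip}_\beta}$, and the dyadic telescoping of $f_{B_k}$ together with Lebesgue differentiation for the substantive direction $\|f\|_{\mathrm{Lip}_\beta}\le C\|f\|_{\mathcal{C}^\beta}$. Note, however, that the paper does not supply its own proof of this proposition; it is quoted as a known result from \cite{janson,pal} in the ``Known results'' section, so there is no in-paper argument to compare against---your proof is precisely the classical one found in those references.
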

As before, we also define
\begin{equation*}
\mathcal{C}^{\beta,s}(\mathbb R^d):=\Big\{f\in L^1_{\mathrm{loc}}(\mathbb R^d):\|f\|_{{\mathcal{C}}^{\beta,s}}<\infty\Big\}.
\end{equation*}
Proposition \ref{camwangchen} now tells us that for all $1\leq s<\infty$, the spaces $\mathcal{C}^{\beta,s}(\mathbb R^d)$ coincide, and the norms $\|\cdot\|_{{\mathcal{C}}^{\beta,s}}$ are equivalent with respect to different values of $s$.
\begin{rem}
\begin{enumerate}
  \item We mention that this result leads to a generalization of the classical Sobolev embedding theorem. It is also well known that $\mathrm{Lip}_{1/p-1}(\mathbb R^d)$ is the dual space of Hardy space $H^p(\mathbb R^d)$ when $0<p<1$, and $\mathrm{Lip}_{0}(\mathbb R^d)=\mathrm{BMO}(\mathbb R^d)$ is the dual space of Hardy space $H^1(\mathbb R^d)$, see \cite{duoand,grafakos,grafakos2}.
  \item There are some other characterizations of Campanato and Lipschitz spaces, which have been obtained by several authors. For instance, we can give some new characterizations of Campanato and Lipschitz spaces via the boundedness of commutators (such as Calder\'{o}n--Zygmund singular integral operators and fractional integrals). We can also obtain Littlewood--Paley characterizations of Lipschitz spaces using the Littlewood--Paley theory. For further details, we refer the reader to \cite{deng,duong,grafakos,pal,shi,shi2} and the references therein.
  \item For the weighted version of Campanato spaces, see \cite{tang3} and \cite{yang} for example. For more general results in the context of spaces of homogeneous type, see \cite{nakai} and \cite{tang4} for example.
\end{enumerate}
\end{rem}
It is natural to consider the same problems (characterizations of function spaces) in the Schr\"{o}dinger context. Concerning the BMO and Campanato spaces related to Schr\"{o}dinger operators with nonnegative potentials, we can obtain the following conclusions.
\begin{prop}
Let $0<\theta<\infty$ and $1\leq s<\infty$. If $f\in \mathrm{BMO}_{\rho,\theta}(\mathbb R^d)$, then there exists a positive constant $C>0$ such that
\begin{equation*}
\bigg(\frac{1}{|\mathcal{Q}|}\int_{\mathcal{Q}}|f(x)-f_{\mathcal{Q}}|^s\,dx\bigg)^{1/s}
\leq C\bigg(1+\frac{r}{\rho(x_0)}\bigg)^{(N_0+1)\theta}\|f\|_{\mathrm{BMO}_{\rho,\theta}}
\end{equation*}
holds for every cube $\mathcal{Q}=Q(x_0,r)$ with $x_0\in\mathbb R^d$ and $r>0$, where $N_0$ is the constant appearing in Lemma $\ref{N0}$.
\end{prop}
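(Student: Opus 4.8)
The plan is to deduce the $L^s$ mean-oscillation bound from the exponential John--Nirenberg inequality recorded in Lemma~\ref{expbmo}, combined with the standard layer-cake representation of the $L^s$ average. First I would dispose of the trivial case $s=1$: directly from the definition of $\|f\|_{\mathrm{BMO}_{\rho,\theta}}$ one has
\[
\frac{1}{|\mathcal{Q}|}\int_{\mathcal{Q}}|f(x)-f_{\mathcal{Q}}|\,dx\leq \|f\|_{\mathrm{BMO}_{\rho,\theta}}\bigg(1+\frac{r}{\rho(x_0)}\bigg)^{\theta},
\]
and since $N_0>0$ and $1+r/\rho(x_0)\geq1$, the exponent $\theta$ may be replaced by the larger $(N_0+1)\theta$ without loss, giving the claim with $C=1$.

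For $1<s<\infty$, I would fix a cube $\mathcal{Q}=Q(x_0,r)$ and invoke the distribution-function identity
\[
\frac{1}{|\mathcal{Q}|}\int_{\mathcal{Q}}|f(x)-f_{\mathcal{Q}}|^s\,dx
=\frac{s}{|\mathcal{Q}|}\int_0^{\infty}\lambda^{s-1}\Big|\Big\{x\in\mathcal{Q}:|f(x)-f_{\mathcal{Q}}|>\lambda\Big\}\Big|\,d\lambda.
\]
Substituting the bound for the distribution function supplied by Lemma~\ref{expbmo}, the factors of $|\mathcal{Q}|$ cancel, and writing
\[
A:=\bigg(1+\frac{r}{\rho(x_0)}\bigg)^{-(N_0+1)\theta}\frac{C_2}{\|f\|_{\mathrm{BMO}_{\rho,\theta}}},
\]
the right-hand side is dominated by $sC_1\int_0^{\infty}\lambda^{s-1}e^{-A\lambda}\,d\lambda$.

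This last integral is a Gamma integral equal to $sC_1\Gamma(s)A^{-s}=C_1\Gamma(s+1)A^{-s}$, so after substituting the value of $A$ I would obtain
\[
\frac{1}{|\mathcal{Q}|}\int_{\mathcal{Q}}|f(x)-f_{\mathcal{Q}}|^s\,dx
\leq C_1\Gamma(s+1)C_2^{-s}\,\|f\|_{\mathrm{BMO}_{\rho,\theta}}^s\bigg(1+\frac{r}{\rho(x_0)}\bigg)^{(N_0+1)\theta s}.
\]
Taking $s$-th roots then yields the asserted inequality with $C=(C_1\Gamma(s+1))^{1/s}/C_2$. There is no genuine obstacle in this argument; the only points deserving attention are that the exponent $(N_0+1)\theta$ is multiplied by $s$ inside the Gamma integral and divided by $s$ again upon taking the root, so that it reappears unchanged, and that the resulting constant $C$ depends on $s$ only through the harmless factor $\Gamma(s+1)^{1/s}$. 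This is precisely the $\rho$-adapted analogue of the classical passage from the John--Nirenberg inequality to the equivalence $\|\cdot\|_{\mathrm{BMO}^s}\approx\|\cdot\|_{\mathrm{BMO}}$.
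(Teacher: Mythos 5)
Your proof is correct. There is, however, nothing in the paper to compare it against: the paper states this proposition without proof, attributing it to Bongioanni--Harboure--Salinas \cite[Proposition 3]{bong3}. What you have done is re-derive it from Lemma \ref{expbmo} (Tang's John--Nirenberg inequality for $\mathrm{BMO}_{\rho,\theta}$) via the layer-cake identity and a Gamma integral, and every step checks out: the $s=1$ case follows from the definition because $(1+r/\rho(x_0))^{\theta}\leq(1+r/\rho(x_0))^{(N_0+1)\theta}$; the distribution-function identity is valid for all $s\geq1$; and $s\int_0^{\infty}\lambda^{s-1}e^{-A\lambda}\,d\lambda=\Gamma(s+1)A^{-s}$ yields exactly the asserted bound after taking $s$-th roots, the factor $s$ in the exponent $(N_0+1)\theta s$ being cancelled as you note. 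The argument is also non-circular within the paper's logical structure, since Lemma \ref{expbmo} is quoted from \cite{tang} independently of this proposition. Two remarks on how your route sits relative to the rest of the paper: first, your argument is precisely the unweighted specialization of the method the paper uses to prove Theorem \ref{mainthm1}(1) and Theorem \ref{mainthm2}(1) (there the same layer-cake plus Gamma-integral computation appears, with the additional ingredient of Lemma \ref{comparelem} to pass from Lebesgue measure to weighted measure, which you do not need); second, the original proof in \cite{bong3} predates Tang's inequality and so must proceed by other means, working at the critical scale with the classical John--Nirenberg inequality. Given Lemma \ref{expbmo} as a black box your derivation is the shortest possible one; the real depth is concentrated in that lemma.
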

This result was first proved by Bongioanni--Harboure--Salinas in 2011, see \cite[Proposition 3]{bong3}.

\begin{lem}[\cite{liu}]\label{beta}
If $f\in\mathcal{C}^{\beta}_{\rho,\theta}(\mathbb R^d)$ with $0<\beta<1$ and $0<\theta<\infty$, then there exists a positive constant $C>0$ such that
\begin{equation*}
\frac{|f(x)-f(y)|}{|x-y|^{\beta}}\leq C\|f\|_{\mathcal{C}^{\beta}_{\rho,\theta}}
\bigg(1+\frac{|x-y|}{\rho(x)}+\frac{|x-y|}{\rho(y)}\bigg)^{\theta}
\end{equation*}
holds true for all $x,y\in\mathbb R^d$ with $x\neq y$. Conversely, if there is a positive constant $C>0$ such that for any $x,y\in\mathbb R^d$ with $x\neq y$,
\begin{equation*}
\frac{|f(x)-f(y)|}{|x-y|^{\beta}}\leq C\bigg(1+\frac{|x-y|}{\rho(x)}+\frac{|x-y|}{\rho(y)}\bigg)^{\theta}
\end{equation*}
holds for some $\theta>0$ and $0<\beta<1$, then $f\in \mathcal{C}^{\beta}_{\rho,(N_0+1)\theta}(\mathbb R^d)$.
\end{lem}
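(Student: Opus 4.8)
The plan is to prove the two implications separately, recognizing this as a Schr\"odinger-adapted version of the classical equivalence between Campanato and Lipschitz spaces (Proposition \ref{camwangchen}), with the critical radius factors tracked carefully throughout.

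For the forward direction I would run the standard dyadic telescoping argument that upgrades the mean-oscillation control to a pointwise H\"older estimate. Fix $x\neq y$ and set $r=|x-y|$. Writing $B_k=B(x,2^{-k}r)$, the Lebesgue differentiation theorem gives $f(x)=\lim_k f_{B_k}$ a.e., so $|f(x)-f_{B_0}|\le\sum_{k\ge 0}|f_{B_{k+1}}-f_{B_k}|$. Each increment is estimated by $|f_{B_{k+1}}-f_{B_k}|\le 2^d\cdot|B_k|^{-1}\int_{B_k}|f-f_{B_k}|$, and inserting the defining inequality of $\mathcal{C}^\beta_{\rho,\theta}$ produces a factor $|B_k|^{\beta/d}(1+2^{-k}r/\rho(x))^\theta$. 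The decisive observation is that since $2^{-k}r\le r$ and $t\mapsto(1+t/\rho(x))^\theta$ is increasing, every one of these critical-radius factors is dominated by the single factor $(1+r/\rho(x))^\theta$, which can therefore be pulled out of the sum; what remains is the convergent geometric series $\sum_k 2^{-k\beta}$ (here $\beta>0$ is essential). This yields $|f(x)-f_{B(x,r)}|\le Cr^\beta\|f\|_{\mathcal{C}^\beta_{\rho,\theta}}(1+r/\rho(x))^\theta$ and, symmetrically, the same bound for $y$. It then remains to compare $f_{B(x,r)}$ and $f_{B(y,r)}$ by routing both through $f_{B(x,2r)}$ (using $B(y,r)\subset B(x,2r)$ together with $(1+2r/\rho(x))^\theta\le 2^\theta(1+r/\rho(x))^\theta$); the triangle inequality and the elementary bound $(1+a)^\theta+(1+b)^\theta\le 2(1+a+b)^\theta$ for $a,b\ge 0$ finish this half.

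For the converse I would argue by direct averaging. Given $B=B(x_0,r)$, bound $|f(x)-f_B|\le|B|^{-1}\int_B|f(x)-f(y)|\,dy$ and insert the hypothesis, using that any $x,y\in B$ satisfy $|x-y|<2r$. The point is to control the factor $1+|x-y|/\rho(x)+|x-y|/\rho(y)$ in terms of the center $x_0$: applying \eqref{wangh3} to each of $x,y\in B(x_0,r)$ gives $2r/\rho(x),\,2r/\rho(y)\le 2C_0(1+r/\rho(x_0))^{N_0+1}$, whence $1+|x-y|/\rho(x)+|x-y|/\rho(y)\le 5C_0(1+r/\rho(x_0))^{N_0+1}$. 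Raising to the $\theta$ power explains the appearance of the exponent $(N_0+1)\theta$ in the conclusion. Integrating the resulting bound $|f(x)-f(y)|\le Cr^\beta(1+r/\rho(x_0))^{(N_0+1)\theta}$ twice over $B$ and noting that $r^\beta/|B|^{\beta/d}$ is a dimensional constant gives exactly the defining inequality of $\mathcal{C}^\beta_{\rho,(N_0+1)\theta}$.

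The main obstacle, and the only place where the Schr\"odinger structure genuinely intervenes, is the bookkeeping of the critical-radius factors. In the forward direction the subtlety is mild: the dyadic balls share the fixed center $x$, so the monotonicity of $(1+t/\rho(x))^\theta$ suffices and no appeal to Lemma \ref{N0} is needed, and the classical telescoping goes through once this factor is frozen at scale $r$. In the converse the real work is obtaining the sharp exponent $(N_0+1)\theta$ rather than a doubled one: this forces one to apply the additive estimate \eqref{wangh3} to each point separately rather than multiplying two comparison bounds, and it is precisely the loss in Lemma \ref{N0} that is responsible for the factor $N_0+1$.
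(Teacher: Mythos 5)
Your proposal is correct, but there is nothing in the paper to compare it against: the paper does not prove this lemma at all. It appears in the ``Known results'' section and is imported, with citation only, from Liu--Sheng \cite{liu}; the authors use it as a black box in the proof of Theorem \ref{mainthm3}. So your argument should be judged on its own merits, and on those merits it stands. The forward direction is the standard Campanato-to-H\"older telescoping over the balls $B(x,2^{-k}r)$, and your key bookkeeping point is right: all dyadic balls share the center $x$, so monotonicity of $t\mapsto(1+t/\rho(x))^{\theta}$ freezes the critical-radius factor at scale $r$ and no appeal to Lemma \ref{N0} is needed there; the comparison of $f_{B(x,r)}$ with $f_{B(y,r)}$ through $f_{B(x,2r)}$ and the elementary inequality $(1+a)^{\theta}+(1+b)^{\theta}\leq 2(1+a+b)^{\theta}$ close that half. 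The converse is also right, and you correctly identify where the exponent $(N_0+1)\theta$ comes from: applying \eqref{wangh3} separately to $x$ and to $y$ inside $B(x_0,r)$ gives $1+2r/\rho(x)+2r/\rho(y)\leq 5C_0(1+r/\rho(x_0))^{N_0+1}$, and the double average over $B$ then produces exactly the $\mathcal{C}^{\beta}_{\rho,(N_0+1)\theta}$ condition. Only one small caveat deserves a sentence in a polished write-up: the Lebesgue differentiation step yields the pointwise bound only at Lebesgue points of $f$, i.e. almost everywhere, so the inequality ``for all $x\neq y$'' is to be understood after identifying $f$ with its (H\"older) continuous representative --- the same convention as in the classical Campanato--Meyers theorem and implicit in the statement of the lemma.
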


\begin{prop}
Let $0<\theta<\infty$ and $1\leq s<\infty$. If $f\in \mathcal{C}^{\beta}_{\rho,\theta}(\mathbb R^d)$ with $0<\beta<1$, then there exists a positive constant $C>0$ such that
\begin{equation*}
\frac{1}{|{\mathcal{B}}|^{\beta/d}}\bigg(\frac{1}{|\mathcal{B}|}\int_{\mathcal{B}}|f(x)-f_{\mathcal{B}}|^s\,dx\bigg)^{1/s}
\leq C\bigg(1+\frac{r}{\rho(x_0)}\bigg)^{(N_0+1)\theta}\|f\|_{\mathrm{Lip}_{\beta}^{\rho,\theta}}
\end{equation*}
holds for every ball $\mathcal{B}=B(x_0,r)$ with $x_0\in\mathbb R^d$ and $r>0$, where $N_0$ is the constant appearing in Lemma $\ref{N0}$.
\end{prop}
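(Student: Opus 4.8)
The plan is to derive the $L^{s}$ mean-oscillation estimate directly from the pointwise Lipschitz-type bound furnished by the first half of Lemma \ref{beta}, combined with the comparison \eqref{wangh3} for the critical radius function. Throughout I write $\mathcal{B}=B(x_0,r)$ and treat $\|f\|_{\mathrm{Lip}_{\beta}^{\rho,\theta}}$ as the Campanato--Schr\"odinger quantity $\|f\|_{\mathcal{C}^{\beta}_{\rho,\theta}}$, so that Lemma \ref{beta} is applicable to $f\in\mathcal{C}^{\beta}_{\rho,\theta}(\mathbb R^d)$.

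First I would reduce the mean oscillation to a two-point difference. Writing $f(x)-f_{\mathcal{B}}=\frac{1}{|\mathcal{B}|}\int_{\mathcal{B}}\big(f(x)-f(y)\big)\,dy$ and using the triangle inequality in integral form gives, for a.e.\ $x\in\mathcal{B}$,
\begin{equation*}
|f(x)-f_{\mathcal{B}}|\leq\frac{1}{|\mathcal{B}|}\int_{\mathcal{B}}|f(x)-f(y)|\,dy,
\end{equation*}
so it suffices to bound $|f(x)-f(y)|$ uniformly for $x,y\in\mathcal{B}$. Lemma \ref{beta} supplies
\begin{equation*}
|f(x)-f(y)|\leq C\|f\|_{\mathcal{C}^{\beta}_{\rho,\theta}}\,|x-y|^{\beta}\bigg(1+\frac{|x-y|}{\rho(x)}+\frac{|x-y|}{\rho(y)}\bigg)^{\theta}.
\end{equation*}

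Next I would control the two factors on the right using the geometry of $\mathcal{B}$. Since $x,y\in\mathcal{B}$ forces $|x-y|<2r$, we have $|x-y|^{\beta}\le(2r)^{\beta}$. For the $\rho$-factor I apply \eqref{wangh3} to both $x$ and $y$: because $x,y\in B(x_0,r)$,
\begin{equation*}
\frac{|x-y|}{\rho(x)}<\frac{2r}{\rho(x)}\leq 2C_0\bigg(1+\frac{r}{\rho(x_0)}\bigg)^{N_0+1},
\end{equation*}
and likewise with $y$ in place of $x$. Since $\big(1+r/\rho(x_0)\big)^{N_0+1}\ge1$, adding the constant $1$ and combining yields
\begin{equation*}
\bigg(1+\frac{|x-y|}{\rho(x)}+\frac{|x-y|}{\rho(y)}\bigg)^{\theta}\leq C\bigg(1+\frac{r}{\rho(x_0)}\bigg)^{(N_0+1)\theta},
\end{equation*}
which is precisely where the exponent $(N_0+1)\theta$ is produced.

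Combining these estimates, for a.e.\ $x\in\mathcal{B}$ the quantity $|f(x)-f_{\mathcal{B}}|$ is dominated by $C\,r^{\beta}\big(1+r/\rho(x_0)\big)^{(N_0+1)\theta}\|f\|_{\mathcal{C}^{\beta}_{\rho,\theta}}$, a bound independent of $x$. As this pointwise estimate is uniform in $x$, forming the $L^{s}$-average over $\mathcal{B}$ costs nothing, and the last step is the scale-invariant cancellation $|\mathcal{B}|^{\beta/d}=v_d^{\beta/d}r^{\beta}$ (with $v_d$ the volume of the unit ball) against the factor $r^{\beta}$, giving exactly
\begin{equation*}
\frac{1}{|\mathcal{B}|^{\beta/d}}\bigg(\frac{1}{|\mathcal{B}|}\int_{\mathcal{B}}|f(x)-f_{\mathcal{B}}|^{s}\,dx\bigg)^{1/s}\leq C\bigg(1+\frac{r}{\rho(x_0)}\bigg)^{(N_0+1)\theta}\|f\|_{\mathcal{C}^{\beta}_{\rho,\theta}}.
\end{equation*}
I expect the main (though modest) obstacle to be the careful bookkeeping of the exponent in the $\rho$-factor: one must invoke \eqref{wangh3} for both $x$ and $y$ and verify that the diameter scale $|x-y|<2r$ does not inflate the power beyond $(N_0+1)\theta$. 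The homogeneity cancellation of $r^{\beta}$ is what makes the Campanato normalization $|\mathcal{B}|^{\beta/d}$ match the $\beta$-order of the Lipschitz estimate, so that the bound is independent of the radius $r$ apart from the explicit $\rho$-growth factor.
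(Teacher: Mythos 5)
Your proof is correct, and it takes essentially the same route as the paper: although the paper itself only cites Liu--Sheng for this proposition rather than proving it, the identical argument --- the pointwise bound from the first half of Lemma \ref{beta} combined with \eqref{wangh3} to produce the factor $\big(1+r/\rho(x_0)\big)^{(N_0+1)\theta}$, followed by the cancellation of $r^{\beta}$ against $|\mathcal{B}|^{\beta/d}$ --- is exactly what the paper carries out in the proof of Theorem \ref{mainthm3}(1). Your bookkeeping (using $|x-y|<2r$ and applying \eqref{wangh3} at both $x$ and $y$) is accurate, so the exponent $(N_0+1)\theta$ comes out as claimed.
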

This result was first given by Liu--Sheng in 2014, see \cite[Proposition 3]{liu}.

From the above overview, we can see that there are many problems to be studied in the new spaces $\mathrm{BLO}_{\rho,\theta}(\mathbb R^d)$ and $\mathcal{C}^{\beta,\ast}_{\rho,\theta}(\mathbb R^d)$. One is naturally led to ask whether it is possible to obtain a variant of the John--Nirenberg inequality for the space $\mathrm{BLO}_{\rho,\theta}(\mathbb R^d)$. In this paper we give a positive answer to this problem. Moreover, we give several results about characterizations for BLO space and Campanato space related to the Schr\"{o}dinger operator $\mathcal{L}=-\Delta+V$. This is a continuation of the previous work by the authors in \cite{wang3}.

As already mentioned in the introduction, the harmonic analysis arising from the Schr\"{o}dinger operator $\mathcal{L}=-\Delta+V$ is based on the use of a related critical radius function, which was introduced by Shen in \cite{shen}. In this framework, to show our main results, we rely on a version of the John--Nirenberg inequality for the space $\mathrm{BLO}_{\rho,\theta}(\mathbb R^d)$(see Lemma \ref{expblo} below), a pointwise estimate for the function $f\in \mathcal{C}^{\beta}_{\rho,\theta}(\mathbb R^d)$(see Lemma \ref{beta} above), and some related properties of classes of weights adapted to the Schr\"{o}dinger operator $\mathcal{L}$.

\section{John--Nirenberg type inequalities for the new spaces}
In this section, we are concerned with the John--Nirenberg type inequality with precise constants suitable for the $\mathrm{BLO}_{\rho,\theta}$ spaces and relevant properties.

\begin{lem}\label{expblo}
If $f\in \mathrm{BLO}_{\rho,\theta}(\mathbb R^d)$ with $0<\theta<\infty$, then there exist two positive constants $C_1$ and $C_2$ such that for every cube $\mathcal{Q}=Q(x_0,r)$ and every $\lambda>0$,
\begin{equation}\label{maincw}
\begin{split}
&\Big|\Big\{x\in \mathcal{Q}:\Big[f(x)-\underset{y\in\mathcal{Q}}{\mathrm{ess\,inf}}\,f(y)\Big]>\lambda\Big\}\Big|\\
&\leq \overline{C}_1|\mathcal{Q}|\exp\bigg\{-\bigg(1+\frac{r}{\rho(x_0)}\bigg)^{-(N_0+1)\theta}
\frac{\overline{C}_2\lambda}{\|f\|_{\mathrm{BLO}_{\rho,\theta}}}\bigg\},
\end{split}
\end{equation}
where $N_0$ is the constant appearing in Lemma \ref{N0}. More specifically, we may choose
\begin{equation*}
\overline{C}_1=e\quad and \quad\overline{C}_2=\frac{1}{C_0^{\theta}2^de}.
\end{equation*}
\end{lem}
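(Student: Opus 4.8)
The plan is to run the Calder\'on--Zygmund stopping-time iteration that underlies every John--Nirenberg inequality, but to measure all oscillations against a single height that is adapted to the base cube through Lemma \ref{N0}. Fix a cube $\mathcal{Q}_0=Q(x_0,r)$, abbreviate $A:=\|f\|_{\mathrm{BLO}_{\rho,\theta}}$, and set $g:=f-\underset{y\in\mathcal{Q}_0}{\mathrm{ess\,inf}}\,f(y)$, so that $g\geq0$ almost everywhere on $\mathcal{Q}_0$. The decisive first step is to extract a \emph{uniform} mean bound valid simultaneously for every subcube $\mathcal{Q}'=Q(x',r')\subset\mathcal{Q}_0$. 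Since such a $\mathcal{Q}'$ satisfies $r'\leq r$ and $x'\in\mathcal{Q}_0$, the defining inequality \eqref{wangdef1} combined with \eqref{wangh3} gives
\[
\frac{1}{|\mathcal{Q}'|}\int_{\mathcal{Q}'}\Big[f(x)-\underset{y\in\mathcal{Q}'}{\mathrm{ess\,inf}}\,f(y)\Big]\,dx
\leq A\Big(1+\tfrac{r}{\rho(x')}\Big)^{\theta}
\leq A\,C_0^{\theta}\Big(1+\tfrac{r}{\rho(x_0)}\Big)^{(N_0+1)\theta}=:K,
\]
where $C_0$ and $N_0$ are the constants of Lemma \ref{N0}. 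This single estimate replaces the constant bound $\|f\|_{\mathrm{BLO}}$ of the classical case and is the source of both the factor $C_0^{\theta}$ and the exponent $(N_0+1)\theta$ in the final constants.

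Next I would carry out the iteration at the fixed height $\alpha:=eK$, writing $m_E(h):=|E|^{-1}\int_E h\,dx$ for averages. Because $m_{\mathcal{Q}_0}(g)\leq K<\alpha$, a Calder\'on--Zygmund decomposition of $g$ on $\mathcal{Q}_0$ produces maximal dyadic subcubes $\{Q^1_j\}$ on each of which $\alpha<m_{Q^1_j}(g)\leq2^d\alpha$, while $g\leq\alpha$ a.e.\ off $\Omega_1:=\bigcup_jQ^1_j$, and $|\Omega_1|\leq\alpha^{-1}\int_{\mathcal{Q}_0}g\leq e^{-1}|\mathcal{Q}_0|$. On each selected cube I replace $g$ by $g_{Q^1_j}:=f-\mathrm{ess\,inf}_{Q^1_j}f$; the essential infimum can only increase on subcubes, so $c^1_j:=\mathrm{ess\,inf}_{Q^1_j}f-\mathrm{ess\,inf}_{\mathcal{Q}_0}f=\mathrm{ess\,inf}_{Q^1_j}g\leq m_{Q^1_j}(g)\leq2^d\alpha$, while the uniform bound above guarantees $m_{Q^1_j}(g_{Q^1_j})\leq K<\alpha$, so the same decomposition applies verbatim. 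Iterating yields nested families $\Omega_1\supset\Omega_2\supset\cdots$ with $|\Omega_k|\leq e^{-k}|\mathcal{Q}_0|$. Telescoping the infimum shifts along any chain shows that on a generation-$k$ cube $Q^k$ one has $\mathrm{ess\,inf}_{Q^k}f-\mathrm{ess\,inf}_{\mathcal{Q}_0}f\leq k\,2^d\alpha$; hence on the good layer $\Omega_k\setminus\Omega_{k+1}$ one has $g\leq\alpha+k\,2^d\alpha\leq(k+1)2^d\alpha$ a.e.

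Finally I would convert this two-sided information into the stated exponential decay. If $\lambda>(k+1)2^d\alpha$, then every layer $\Omega_m\setminus\Omega_{m+1}$ with $m\leq k$ contributes nothing to $\{x\in\mathcal{Q}_0:g(x)>\lambda\}$, so this set is contained in $\Omega_{k+1}$ and has measure at most $e^{-(k+1)}|\mathcal{Q}_0|$. Choosing $k$ to be the largest integer with $(k+1)2^d\alpha<\lambda$, i.e.\ $k+1\approx\lambda/(2^d e K)$, and recalling $K=A\,C_0^{\theta}(1+r/\rho(x_0))^{(N_0+1)\theta}$, gives
\[
\Big|\Big\{x\in\mathcal{Q}_0:g(x)>\lambda\Big\}\Big|
\leq e\,|\mathcal{Q}_0|\exp\bigg\{-\Big(1+\tfrac{r}{\rho(x_0)}\Big)^{-(N_0+1)\theta}\frac{\lambda}{C_0^{\theta}2^de\,A}\bigg\},
\]
which is precisely \eqref{maincw} with $\overline{C}_1=e$ and $\overline{C}_2=1/(C_0^{\theta}2^de)$. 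I expect the main obstacle to be the first step: securing the uniform mean bound $K$ with the sharp constant $C_0^{\theta}$ requires applying \eqref{wangh3} uniformly to all centers $x'$ of the stopping cubes, and care is needed so that the linear-in-$k$ accumulation of the infimum shifts $c^k$ is exactly balanced against the geometric decay $e^{-k}$ of the measures. Once the height is pinned to $\alpha=eK$ the bookkeeping closes and the precise constants emerge; any other normalization would perturb them.
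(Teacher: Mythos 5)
Your proposal is correct and is essentially the paper's own argument: both run the iterated Calder\'on--Zygmund stopping-time decomposition with telescoping of the essential infima along chains of selected cubes, obtain the geometric decay $e^{-k}$ of the exceptional sets, and arrive at exactly the same constants $\overline{C}_1=e$ and $\overline{C}_2=1/(C_0^{\theta}2^de)$. The only difference is organizational: you invoke \eqref{wangh3} once at the outset to fix a single uniform stopping height $\alpha=eK$ valid for every subcube of $\mathcal{Q}_0$ (so the iteration is literally the classical constant-height one), whereas the paper stops at cube-adapted heights $\sigma\big(1+r_{k-1}/\rho(x_{k-1})\big)^{\theta}$ and applies \eqref{wangh3} only in the final telescoping; this streamlining (together with your implicit, and trivially verified, case $\lambda\leq 2^d\alpha$, which matches the paper's case $\lambda\leq1$ after normalization) changes nothing in substance.
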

\begin{proof}
Some ideas of the proof of this lemma come from \cite{duoand} and \cite{grafakos2}. The proof has five main steps.

\textbf{Step 1}. Without loss of generality, we may assume that $\|f\|_{\mathrm{BLO}_{\rho,\theta}}=1$ with $0<\theta<\infty$. Note that
\begin{equation*}
\bigg(1+\frac{r}{\rho(x_0)}\bigg)^{\theta}\geq1,\;~~\mbox{for any}~\theta>0.
\end{equation*}
If $\lambda\leq 1$, then the inequality \eqref{maincw} holds true by choosing $\overline{C}_1=e$ and $\overline{C}_2=1$. Now we suppose that $\lambda>1$. Then for each fixed cube $\mathcal{Q}=Q(x_0,r)$, we can apply the Calder\'{o}n--Zygmund decomposition to the function $f(x)-\underset{y\in\mathcal{Q}}{\mathrm{ess\,inf}}f(y)$ inside the cube $\mathcal{Q}$. Let $\sigma>1$ be a positive constant to be fixed below.
Since
\begin{equation*}
\bigg(1+\frac{r}{\rho(x_0)}\bigg)^{-\theta}
\bigg(\frac{1}{|\mathcal{Q}|}\int_{\mathcal{Q}}\Big[f(x)-\underset{y\in\mathcal{Q}}{\mathrm{ess\,inf}}\,f(y)\Big]\,dx\bigg)
\leq\|f\|_{\mathrm{BLO}_{\rho,\theta}}=1<\sigma,
\end{equation*}
we then follow the same argument(the so-called stopping time argument) as in the proof of \cite[Theorem 7.1.6]{grafakos2} to obtain a collection of (pairwise disjoint) cubes $\{Q^{(1)}_j\}_j$ satisfying the following properties:
\begin{equation*}
\begin{split}
&(A)\mbox{-1}. ~~\mbox{The interior of every cube}~ Q^{(1)}_j ~\mbox{is contained in}~ \mathcal{Q};\\
&(B)\mbox{-1}. ~~\sigma\bigg(1+\frac{r}{\rho(x_0)}\bigg)^{\theta}
<\frac{1}{|Q^{(1)}_j|}\int_{Q^{(1)}_j}\Big[f(x)-\underset{y\in\mathcal{Q}}{\mathrm{ess\,inf}}\,f(y)\Big]\,dx
\leq 2^d\sigma\bigg(1+\frac{r}{\rho(x_0)}\bigg)^{\theta};\\
&(C)\mbox{-1}. ~~0\leq\underset{y\in{Q^{(1)}_j}}{\mathrm{ess\,inf}}\,f(y)-\underset{y\in\mathcal{Q}}{\mathrm{ess\,inf}}\,f(y)
\leq 2^d\sigma\bigg(1+\frac{r}{\rho(x_0)}\bigg)^{\theta};\\
&(D)\mbox{-1}. ~~\sum_{j}\big|Q^{(1)}_j\big|\leq\frac{|\mathcal{Q}|}{\sigma};\\
&(E)\mbox{-1}. ~~f(x)-\underset{y\in\mathcal{Q}}{\mathrm{ess\,inf}}\,f(y)
\leq\sigma\bigg(1+\frac{r}{\rho(x_0)}\bigg)^{\theta},~~a.e.~x\in \mathcal{Q}\setminus\bigcup_jQ^{(1)}_j.
\end{split}
\end{equation*}
We prove these properties $(A)$-1 through $(E)$-1. Obviously, properties $(A)$-1 and $(B)$-1 hold by the selection criterion of the cubes $Q^{(1)}_j$(viewed as the first generation of $\mathcal{Q}$). Since $Q^{(1)}_j\subset \mathcal{Q}$ and
\begin{equation*}
\begin{split}
\underset{y\in{Q^{(1)}_j}}{\mathrm{ess\,inf}}\,f(y)
&=\frac{1}{|Q^{(1)}_j|}\int_{Q^{(1)}_j}\underset{y\in{Q^{(1)}_j}}{\mathrm{ess\,inf}}\,f(y)\,dx\\
&\leq\frac{1}{|Q^{(1)}_j|}\int_{Q^{(1)}_j}f(x)\,dx=f_{Q^{(1)}_j},
\end{split}
\end{equation*}
we get
\begin{equation*}
\begin{split}
0&\leq\underset{y\in{Q^{(1)}_j}}{\mathrm{ess\,inf}}\,f(y)-\underset{y\in\mathcal{Q}}{\mathrm{ess\,inf}}\,f(y)\\
&\leq\frac{1}{|Q^{(1)}_j|}\int_{Q^{(1)}_j}f(x)\,dx-\underset{y\in\mathcal{Q}}{\mathrm{ess\,inf}}\,f(y)\\
&=\frac{1}{|Q^{(1)}_j|}\int_{Q^{(1)}_j}\Big[f(x)-\underset{y\in\mathcal{Q}}{\mathrm{ess\,inf}}\,f(y)\Big]\,dx\\
&\leq 2^d\sigma\bigg(1+\frac{r}{\rho(x_0)}\bigg)^{\theta},
\end{split}
\end{equation*}
where in the last inequality we have used $(B)$-1. Because the cubes $Q^{(1)}_j$ are pairwise disjoint, then it follows from $(B)$-1 that
\begin{equation*}
\begin{split}
\bigg(1+\frac{r}{\rho(x_0)}\bigg)^{\theta}\sum_{j}\big|Q^{(1)}_j\big|&<\frac{1}{\sigma}
\sum_{j}\int_{Q^{(1)}_j}\Big[f(x)-\underset{y\in\mathcal{Q}}{\mathrm{ess\,inf}}\,f(y)\Big]\,dx\\
&=\frac{1}{\sigma}\int_{\bigcup_jQ^{(1)}_j}\Big[f(x)-\underset{y\in\mathcal{Q}}{\mathrm{ess\,inf}}\,f(y)\Big]\,dx\\
&\leq\frac{1}{\sigma}\int_{\mathcal{Q}}\Big[f(x)-\underset{y\in\mathcal{Q}}{\mathrm{ess\,inf}}\,f(y)\Big]\,dx\\
&\leq\frac{|\mathcal{Q}|}{\sigma}\bigg(1+\frac{r}{\rho(x_0)}\bigg)^{\theta}.
\end{split}
\end{equation*}
This is equivalent to $(D)$-1. $(E)$-1 is a consequence of the Lebesgue differentiation theorem.

\textbf{Step 2}. We now fix a selected cube $Q^{(1)}_{j'}$(first generation) and apply the same Calder\'{o}n--Zygmund decomposition to the function $f(x)-\underset{y\in{Q}^{(1)}_{j'}}{\mathrm{ess\,inf}}f(y)$ inside the cube $Q^{(1)}_{j'}$. Also repeat this process for any other cube of the first generation. Let $Q^{(1)}_{j'}=Q(x_1,r_1)$ be the cube centered at $x_1$ and with side length $r_1$. Observe that
\begin{equation*}
\bigg(1+\frac{r_1}{\rho(x_1)}\bigg)^{-\theta}
\bigg(\frac{1}{|Q^{(1)}_{j'}|}\int_{Q^{(1)}_{j'}}\Big[f(x)-\underset{y\in Q^{(1)}_{j'}}{\mathrm{ess\,inf}}\,f(y)\Big]\,dx\bigg)
\leq\|f\|_{\mathrm{BLO}_{\rho,\theta}}=1<\sigma.
\end{equation*}
Arguing as in Step 1, we obtain a collection of (pairwise disjoint) cubes $\{Q^{(2)}_j\}_j$ satisfying the following properties:
\begin{equation*}
\begin{split}
&(A)\mbox{-2}. ~~\mbox{The interior of every cube}~ Q^{(2)}_j ~\mbox{is contained in a unique cube}~ {Q}^{(1)}_{j'};\\
&(B)\mbox{-2}. ~~\sigma\bigg(1+\frac{r_1}{\rho(x_1)}\bigg)^{\theta}
<\frac{1}{|Q^{(2)}_j|}\int_{Q^{(2)}_j}\Big[f(x)-\underset{y\in{Q}^{(1)}_{j'}}{\mathrm{ess\,inf}}\,f(y)\Big]\,dx
\leq 2^d\sigma\bigg(1+\frac{r_1}{\rho(x_1)}\bigg)^{\theta};\\
&(C)\mbox{-2}. ~~0\leq\underset{y\in{Q}^{(2)}_{j}}{\mathrm{ess\,inf}}\,f(y)-\underset{y\in{Q^{(1)}_{j'}}}{\mathrm{ess\,inf}}\,f(y)
\leq 2^d\sigma\bigg(1+\frac{r_1}{\rho(x_1)}\bigg)^{\theta};\\
&(D)\mbox{-2}. ~~\sum_{j}\big|Q^{(2)}_j\big|\leq\frac{1}{\sigma}\sum_{j'}\big|Q^{(1)}_{j'}\big|;\\
&(E)\mbox{-2}. ~~f(x)-\underset{y\in{Q}^{(1)}_{j'}}{\mathrm{ess\,inf}}\,f(y)
\leq\sigma\bigg(1+\frac{r_1}{\rho(x_1)}\bigg)^{\theta},~~a.e.~x\in {Q}^{(1)}_{j'}\setminus\bigcup_jQ^{(2)}_j.
\end{split}
\end{equation*}
In fact, it is clear that properties $(A)$-2 and $(B)$-2 hold by the selection criterion of the cubes $Q^{(2)}_j$(viewed as the second generation of $\mathcal{Q}$). Since $Q^{(2)}_j\subset Q^{(1)}_{j'}$ and
\begin{equation*}
\begin{split}
\underset{y\in{Q^{(2)}_j}}{\mathrm{ess\,inf}}\,f(y)
&=\frac{1}{|Q^{(2)}_j|}\int_{Q^{(2)}_j}\underset{y\in{Q^{(2)}_j}}{\mathrm{ess\,inf}}\,f(y)\,dx
\leq\frac{1}{|Q^{(2)}_j|}\int_{Q^{(2)}_j}f(x)\,dx,
\end{split}
\end{equation*}
so we have
\begin{equation*}
\begin{split}
0\leq\underset{y\in{Q}^{(2)}_{j}}{\mathrm{ess\,inf}}\,f(y)-\underset{y\in{Q^{(1)}_{j'}}}{\mathrm{ess\,inf}}\,f(y)
&\leq\frac{1}{|Q^{(2)}_j|}\int_{Q^{(2)}_j}\Big[f(x)-\underset{y\in{Q}^{(1)}_{j'}}{\mathrm{ess\,inf}}\,f(y)\Big]\,dx\\
&\leq 2^d\sigma\bigg(1+\frac{r_1}{\rho(x_1)}\bigg)^{\theta},
\end{split}
\end{equation*}
due to property $(B)$-2. By the Lebesgue differentiation theorem, $(E)$-2 holds. It remains only to study the last property $(D)$-2. Notice that the cubes $Q^{(2)}_j$ are also pairwise disjoint and each selected cube $Q^{(2)}_j$ is contained in a unique cube $Q^{(1)}_{j'}$, we can deduce that
\begin{equation*}
\begin{split}
\bigg(1+\frac{r_1}{\rho(x_1)}\bigg)^{\theta}\sum_{j}\big|Q^{(2)}_j\big|
&<\frac{1}{\sigma}\sum_{j}\int_{Q^{(2)}_j}\Big[f(x)-\underset{y\in{Q}^{(1)}_{j'}}{\mathrm{ess\,inf}}\,f(y)\Big]\,dx\\
&\leq\frac{1}{\sigma}\sum_{j'}\int_{Q^{(1)}_{j'}}\Big[f(x)-\underset{y\in{Q}^{(1)}_{j'}}{\mathrm{ess\,inf}}\,f(y)\Big]\,dx\\
&\leq\frac{1}{\sigma}\sum_{j'}\big|Q^{(1)}_{j'}\big|\bigg(1+\frac{r_1}{\rho(x_1)}\bigg)^{\theta}\|f\|_{\mathrm{BLO}_{\rho,\theta}}\\
&=\frac{1}{\sigma}\sum_{j'}\big|Q^{(1)}_{j'}\big|\bigg(1+\frac{r_1}{\rho(x_1)}\bigg)^{\theta}.
\end{split}
\end{equation*}
This is just the desired estimate. Summarizing the estimates derived above($(E)$-2 and $(C)$-1), we can deduce that
\begin{equation*}
\begin{split}
f(x)-\underset{y\in\mathcal{Q}}{\mathrm{ess\,inf}}\,f(y)
&=f(x)-\underset{y\in{Q}^{(1)}_{j'}}{\mathrm{ess\,inf}}\,f(y)+\underset{y\in{Q}^{(1)}_{j'}}{\mathrm{ess\,inf}}\,f(y)
-\underset{y\in\mathcal{Q}}{\mathrm{ess\,inf}}\,f(y)\\
&\leq \sigma\bigg(1+\frac{r_1}{\rho(x_1)}\bigg)^{\theta}+2^d\sigma\bigg(1+\frac{r}{\rho(x_0)}\bigg)^{\theta}\\
&\leq\sigma\bigg(1+\frac{r}{\rho(x_1)}\bigg)^{\theta}+2^d\sigma\bigg(1+\frac{r}{\rho(x_0)}\bigg)^{\theta} ,~~a.e.~x\in {Q}^{(1)}_{j'}\setminus\bigcup_jQ^{(2)}_j.
\end{split}
\end{equation*}
This estimate, together with \eqref{wangh3}, implies that for almost every $x\in {Q}^{(1)}_{j'}\setminus\bigcup_jQ^{(2)}_j$,
\begin{equation*}
\begin{split}
f(x)-\underset{y\in\mathcal{Q}}{\mathrm{ess\,inf}}\,f(y)
&\leq C_0^{\theta}\sigma\bigg(1+\frac{r}{\rho(x_0)}\bigg)^{(N_0+1)\theta}+2^d\sigma\bigg(1+\frac{r}{\rho(x_0)}\bigg)^{\theta}\\
&\leq\big(C_0^{\theta}+2^d\big)\sigma\bigg(1+\frac{r}{\rho(x_0)}\bigg)^{(N_0+1)\theta},
\end{split}
\end{equation*}
which, combined with $(E)$-1, yields that for almost every $x\in {\mathcal{Q}}\setminus\bigcup_jQ^{(2)}_j$,
\begin{equation*}
\begin{split}
f(x)-\underset{y\in\mathcal{Q}}{\mathrm{ess\,inf}}\,f(y)
&\leq\big(C_0^{\theta}+2^d\big)\sigma\bigg(1+\frac{r}{\rho(x_0)}\bigg)^{(N_0+1)\theta}\\
&\leq C_0^{\theta}2\sigma\cdot 2^{d}\bigg(1+\frac{r}{\rho(x_0)}\bigg)^{(N_0+1)\theta}.
\end{split}
\end{equation*}
Moreover, from $(D)$-1 and $(D)$-2, we conclude that
\begin{equation*}
\sum_{j}\big|Q^{(2)}_j\big|\leq\frac{1}{\sigma}\sum_{j'}\big|Q^{(1)}_{j'}\big|\leq\frac{|\mathcal{Q}|}{\sigma^2}.
\end{equation*}
\textbf{Step 3}. We repeat this process indefinitely to obtain a collection of cubes $\{Q^{(k)}_j\}_j$ satisfying the following properties:
\begin{equation*}
\begin{split}
&(A)\mbox{-k}. ~~\mbox{The interior of every cube}~ Q^{(k)}_j ~\mbox{is contained in a unique cube}~ {Q}^{(k-1)}_{j'};\\
&(B)\mbox{-k}. ~~\sigma\bigg(1+\frac{r_{k-1}}{\rho(x_{k-1})}\bigg)^{\theta}
<\frac{1}{|Q^{(k)}_j|}\int_{Q^{(k)}_j}\Big[f(x)-\underset{y\in{Q}^{(k-1)}_{j'}}{\mathrm{ess\,inf}}\,f(y)\Big]\,dx
\leq 2^d\sigma\bigg(1+\frac{r_{k-1}}{\rho(x_{k-1})}\bigg)^{\theta};\\
&(C)\mbox{-k}. ~~0\leq\underset{y\in{Q}^{(k)}_{j}}{\mathrm{ess\,inf}}\,f(y)-\underset{y\in{Q^{(k-1)}_{j'}}}{\mathrm{ess\,inf}}\,f(y)
\leq 2^d\sigma\bigg(1+\frac{r_{k-1}}{\rho(x_{k-1})}\bigg)^{\theta};\\
&(D)\mbox{-k}. ~~\sum_{j}\big|Q^{(k)}_j\big|\leq\frac{1}{\sigma}\sum_{j'}\big|Q^{(k-1)}_{j'}\big|;\\
&(E)\mbox{-k}. ~~f(x)-\underset{y\in{Q}^{(k-1)}_{j'}}{\mathrm{ess\,inf}}\,f(y)
\leq\sigma\bigg(1+\frac{r_{k-1}}{\rho(x_{k-1})}\bigg)^{\theta},~~a.e.~x\in {Q}^{(k-1)}_{j'}\setminus\bigcup_jQ^{(k)}_j.
\end{split}
\end{equation*}
Here ${Q}^{(k-1)}_{j'}$ denotes the cube centered at $x_{k-1}$ with side length $r_{k-1}$. By induction, from the previous proof, it actually follows that
\begin{equation*}
f(x)-\underset{y\in\mathcal{Q}}{\mathrm{ess\,inf}}\,f(y)
\leq C_0^{\theta}k\sigma\cdot 2^d\bigg(1+\frac{r}{\rho(x_0)}\bigg)^{(N_0+1)\theta},~~a.e.~x\in \mathcal{Q}\setminus\bigcup_{\ell}Q^{(k)}_{\ell},
\end{equation*}
and
\begin{equation}\label{www}
\sum_{\ell}\big|Q^{(k)}_{\ell}\big|\leq\frac{|\mathcal{Q}|}{\sigma^{k}},\quad k=1,2,3,\dots.
\end{equation}
Therefore
\begin{equation}\label{hhh}
\Big\{x\in \mathcal{Q}:\Big[f(x)-\underset{y\in\mathcal{Q}}{\mathrm{ess\,inf}}\,f(y)\Big]>C_0^{\theta}k\sigma 2^d\Big(1+\frac{r}{\rho(x_0)}\Big)^{(N_0+1)\theta}\Big\}
\subseteq\bigcup_{\ell}Q^{(k)}_{\ell},k=1,2,3,\dots.
\end{equation}
\textbf{Step 4}. Since
\begin{equation*}
(0,\infty)=\bigcup_{k=0}^{\infty}
\bigg(C_0^{\theta}k\sigma2^d\Big(1+\frac{r}{\rho(x_0)}\Big)^{(N_0+1)\theta},C_0^{\theta}(k+1)\sigma2^d\Big(1+\frac{r}{\rho(x_0)}\Big)^{(N_0+1)\theta}\bigg],
\end{equation*}
then for each fixed $\lambda\in(0,\infty)$, we can write
\begin{equation*}
C_0^{\theta}k\sigma2^d\Big(1+\frac{r}{\rho(x_0)}\Big)^{(N_0+1)\theta}<\lambda\leq C_0^{\theta}(k+1)\sigma2^d\Big(1+\frac{r}{\rho(x_0)}\Big)^{(N_0+1)\theta}
\end{equation*}
for some $k\geq0$, and hence
\begin{equation*}
\begin{split}
&\Big|\Big\{x\in \mathcal{Q}:\Big[f(x)-\underset{y\in\mathcal{Q}}{\mathrm{ess\,inf}}\,f(y)\Big]>\lambda\Big\}\Big|\\
&\leq\Big|\Big\{x\in \mathcal{Q}:\Big[f(x)-\underset{y\in\mathcal{Q}}{\mathrm{ess\,inf}}\,f(y)\Big]>C_0^{\theta}k\sigma 2^d\Big(1+\frac{r}{\rho(x_0)}\Big)^{(N_0+1)\theta}\Big\}\Big|\\
&\leq\sum_{\ell}\big|Q_{\ell}^{(k)}\big|\leq\frac{|\mathcal{Q}|}{\sigma^k}\\
&=\sigma|\mathcal{Q}|\cdot\frac{\exp\{-k\log\sigma\}}{\sigma},
\end{split}
\end{equation*}
where in the last two inequalities we have used \eqref{www} and \eqref{hhh}, respectively. Now choose $\sigma=e>1$, we then have
\begin{equation*}
\begin{split}
&\Big|\Big\{x\in \mathcal{Q}:\Big[f(x)-\underset{y\in\mathcal{Q}}{\mathrm{ess\,inf}}\,f(y)\Big]>\lambda\Big\}\Big|\\
&\leq e|\mathcal{Q}|\exp\big\{-(k+1)\big\}\\
&\leq e|\mathcal{Q}|\exp\Big\{-\Big(1+\frac{r}{\rho(x_0)}\Big)^{-(N_0+1)\theta}\frac{\lambda}{C_0^{\theta}2^de}\Big\}.
\end{split}
\end{equation*}
This concludes the proof of Lemma \ref{expblo} for the special case that $f\in\mathrm{BLO}_{\rho,\theta}$ with $\|f\|_{\mathrm{BLO}_{\rho,\theta}}=1$.

\textbf{Step 5}. We now proceed to the general case. In order to do so, we set
\begin{equation*}
\widetilde{f}(x):=\frac{f(x)}{\|f\|_{\mathrm{BLO}_{\rho,\theta}}}.
\end{equation*}
By the definition of $\|\cdot\|_{\mathrm{BLO}_{\rho,\theta}}$, we have
\begin{equation*}
\|\widetilde{f}\|_{\mathrm{BLO}_{\rho,\theta}}=1\quad \&\quad
\underset{y\in\mathcal{Q}}{\mathrm{ess\,inf}}\,f(y)=\|f\|_{\mathrm{BLO}_{\rho,\theta}}
\cdot\underset{y\in\mathcal{Q}}{\mathrm{ess\,inf}}\,\widetilde{f}(y).
\end{equation*}
Hence,
\begin{equation*}
\begin{split}
&\Big|\Big\{x\in \mathcal{Q}:\Big[f(x)-\underset{y\in\mathcal{Q}}{\mathrm{ess\,inf}}\,f(y)\Big]>\lambda\Big\}\Big|\\
&=\Big|\Big\{x\in \mathcal{Q}:\Big[\widetilde{f}(x)-\underset{y\in\mathcal{Q}}{\mathrm{ess\,inf}}\,\widetilde{f}(y)\Big]>
\frac{\lambda}{\|f\|_{\mathrm{BLO}_{\rho,\theta}}}\Big\}\Big|\\
&\leq \overline{C}_1|\mathcal{Q}|\exp\bigg\{-\bigg(1+\frac{r}{\rho(x_0)}\bigg)^{-(N_0+1)\theta}
\frac{\overline{C}_2\lambda}{\|f\|_{\mathrm{BLO}_{\rho,\theta}}}\bigg\},
\end{split}
\end{equation*}
with precise constants
\begin{equation*}
\overline{C}_1=e\quad \& \quad \overline{C}_2=\frac{1}{C_0^{\theta}2^de}.
\end{equation*}
We are done.
\end{proof}

By using Lemma \ref{expblo}, we have the following result, which describes certain exponential integrability for $\mathrm{BLO}_{\rho,\theta}$ functions.
\begin{lem}\label{BMO3}
If $f\in \mathrm{BLO}_{\rho,\theta}(\mathbb R^d)$ with $0<\theta<\infty$, then there exist positive constants $C>0$ and $\gamma>0$ such that for every cube $\mathcal{Q}=Q(x_0,r)$ in $\mathbb R^d$, we have
\begin{equation}\label{wang2}
\begin{split}
&\bigg(\int_{\mathcal{Q}}\exp\bigg[\bigg(1+\frac{r}{\rho(x_0)}\bigg)^{-\theta^{\ast}}\frac{\gamma}{\|f\|_{\mathrm{BLO}_{\rho,\theta}}}
\Big[f(x)-\underset{y\in\mathcal{Q}}{\mathrm{ess\,inf}}\,f(y)\Big]\bigg]dx\bigg)
\leq C\cdot |\mathcal{Q}|,
\end{split}
\end{equation}
where $\theta^{\ast}=(N_0+1)\theta$ and $N_0$ is the constant appearing in Lemma \ref{N0}.
\end{lem}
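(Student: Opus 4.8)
The plan is to derive \eqref{wang2} directly from the John--Nirenberg type inequality \eqref{maincw} of Lemma \ref{expblo}, combined with the layer-cake (distribution function) representation of the exponential. Fix a cube $\mathcal{Q}=Q(x_0,r)$ and abbreviate
\[
g(x):=f(x)-\underset{y\in\mathcal{Q}}{\mathrm{ess\,inf}}\,f(y),
\]
which is nonnegative for a.e.\ $x\in\mathcal{Q}$. Introduce the cube-dependent weight $W:=(1+r/\rho(x_0))^{-\theta^{\ast}}$ and the scaling constant $\kappa:=W\gamma/\|f\|_{\mathrm{BLO}_{\rho,\theta}}$, where $\gamma>0$ will be fixed at the end. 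The first step is the elementary identity, valid because $g\geq0$,
\[
\int_{\mathcal{Q}}\exp\big(\kappa\,g(x)\big)\,dx=|\mathcal{Q}|+\int_0^{\infty}e^{t}\,\Big|\big\{x\in\mathcal{Q}:\kappa\,g(x)>t\big\}\Big|\,dt,
\]
obtained by writing $e^{\kappa g}=1+\int_0^{\kappa g}e^t\,dt$ and applying Fubini's theorem. This reduces the whole estimate to controlling the distribution function of $g$.

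The decisive step is to feed \eqref{maincw} into this identity. Since $\{x:\kappa\,g(x)>t\}=\{x:g(x)>t/\kappa\}$, applying Lemma \ref{expblo} at level $\lambda=t/\kappa$ and recalling that $\theta^{\ast}=(N_0+1)\theta$, the cube-dependent weight appearing in the exponent of \eqref{maincw} is precisely $W$, so that
\[
W\cdot\frac{\overline{C}_2}{\|f\|_{\mathrm{BLO}_{\rho,\theta}}}\cdot\frac{t}{\kappa}
=W\cdot\frac{\overline{C}_2}{\|f\|_{\mathrm{BLO}_{\rho,\theta}}}\cdot\frac{t\,\|f\|_{\mathrm{BLO}_{\rho,\theta}}}{W\gamma}
=\frac{\overline{C}_2\,t}{\gamma},
\]
the factors $W$ and $\|f\|_{\mathrm{BLO}_{\rho,\theta}}$ cancelling identically. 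Hence
\[
\Big|\big\{x\in\mathcal{Q}:\kappa\,g(x)>t\big\}\Big|\leq\overline{C}_1\,|\mathcal{Q}|\exp\Big\{-\frac{\overline{C}_2\,t}{\gamma}\Big\},
\]
a bound in which all dependence on $x_0$, $r$, and $\|f\|_{\mathrm{BLO}_{\rho,\theta}}$ has disappeared.

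The final step is to insert this bound and integrate. The resulting integral
\[
\int_0^{\infty}e^{t}\exp\Big\{-\frac{\overline{C}_2\,t}{\gamma}\Big\}\,dt
=\int_0^{\infty}e^{t(1-\overline{C}_2/\gamma)}\,dt
\]
converges precisely when $\gamma<\overline{C}_2=1/(C_0^{\theta}2^d e)$, in which case it equals $(\overline{C}_2/\gamma-1)^{-1}$. Choosing any such $\gamma$ then gives
\[
\int_{\mathcal{Q}}\exp\big(\kappa\,g(x)\big)\,dx\leq|\mathcal{Q}|\bigg(1+\frac{\overline{C}_1}{\overline{C}_2/\gamma-1}\bigg)=:C\,|\mathcal{Q}|,
\]
with $C$ independent of the cube, which is exactly \eqref{wang2}.

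I do not expect a serious obstacle, since the entire analytic content is already packaged in \eqref{maincw}; the only points requiring care are the bookkeeping that makes the weight $W$ cancel (forced by the identification $\theta^{\ast}=(N_0+1)\theta$) and the threshold condition $\gamma<\overline{C}_2$ needed to keep the exponential integral finite. Any $0<\gamma<\overline{C}_2$ works and fixes the constant $C$ uniformly over all cubes.
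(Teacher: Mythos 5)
Your proposal is correct and follows essentially the same route as the paper: both arguments feed the John--Nirenberg type bound \eqref{maincw} of Lemma \ref{expblo} into a layer-cake representation of the exponential, observe that the factor $\big(1+r/\rho(x_0)\big)^{-\theta^{\ast}}$ and $\|f\|_{\mathrm{BLO}_{\rho,\theta}}$ cancel exactly, and then choose $0<\gamma<\overline{C}_2$ so the resulting integral $\int_0^{\infty}e^{t(1-\overline{C}_2/\gamma)}\,dt$ converges. The only (cosmetic) difference is that you state the distribution identity in the more precise form with the extra $|\mathcal{Q}|$ term, which the paper absorbs into the constant.
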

\begin{proof}
Recall that the following identity
\begin{equation*}
\bigg(\int_{\mathcal{Q}}\exp\big[|f(x)|\big]\,dx\bigg)
=\int_0^{\infty}e^\lambda \big|\big\{x\in \mathcal{Q}:|f(x)|>\lambda\big\}\big|\,d\lambda
\end{equation*}
holds for any cube $\mathcal{Q}$ in $\mathbb R^d$(see, for instance, \cite[Proposition 1.1.4]{grafakos}). Using this identity and Lemma \ref{expblo}, we obtain
\begin{equation*}
\begin{split}
&\bigg(\int_{\mathcal{Q}}\exp\bigg[\bigg(1+\frac{r}{\rho(x_0)}\bigg)^{-\theta^{\ast}}\frac{\gamma}{\|f\|_{\mathrm{BLO}_{\rho,\theta}}}
\Big[f(x)-\underset{y\in\mathcal{Q}}{\mathrm{ess\,inf}}\,f(y)\Big]\bigg]dx\bigg)\\
&=\int_0^{\infty}\exp(\lambda)\cdot
\Big|\Big\{x\in \mathcal{Q}:\Big[f(x)-\underset{y\in\mathcal{Q}}{\mathrm{ess\,inf}}\,f(y)\Big]>\lambda^{\ast}\Big\}\Big|\,d\lambda\\
&\leq \overline{C}_1|\mathcal{Q}|\int_0^{\infty}
\exp(\lambda)\cdot\exp\bigg[-\bigg(1+\frac{r}{\rho(x_0)}\bigg)^{-\theta^{\ast}}\frac{\overline{C}_2 \lambda^{\ast}}{\|f\|_{\mathrm{BLO}_{\rho,\theta}}}\bigg]d\lambda\\
&=\overline{C}_1\cdot|\mathcal{Q}|\int_0^{\infty}\exp(\lambda)\cdot\exp\Big[-\frac{\overline{C}_2\lambda}{\gamma}\Big]d\lambda,
\end{split}
\end{equation*}
where the number $\lambda^{\ast}$ is given by
\begin{equation*}
\lambda^{\ast}:=\frac{\lambda\|f\|_{\mathrm{BLO}_{\rho,\theta}}}{\gamma}\bigg(1+\frac{r}{\rho(x_0)}\bigg)^{\theta^{\ast}}.
\end{equation*}
If we take $\gamma$ small enough so that $0<\gamma<\overline{C}_2$, then the conclusion follows immediately.
\end{proof}

Moreover, we establish some relevant properties for the spaces $\mathcal{C}^{\beta,\ast}_{\rho,\theta}(\mathbb R^d)$ and $\mathrm{BLO}_{\rho,\theta}(\mathbb R^d)$, which extend some known results in the classical BMO and Campanato spaces.
\begin{prop}
Suppose that $f\in \mathcal{C}^{\beta,\ast}_{\rho,\theta}(\mathbb R^d)$ with $0<\theta<\infty$ and $0<\beta<1$. Then for any $\gamma>\beta+\theta$, there is a constant $C>0$ depending only on $d,\beta$ and $\gamma$ such that for any ball $\mathcal{B}=B(x_0,r)$ in $\mathbb R^d$, we have
\begin{equation*}
\int_{\mathbb R^d}\frac{\Big[f(x)-\underset{y\in\mathcal{B}}{\mathrm{ess\,inf}}\,f(y)\Big]}{r^{d+\gamma}+|x-x_0|^{d+\gamma}}\,dx
\leq C\cdot\frac{\|f\|_{\mathcal{C}^{\beta,\ast}_{\rho,\theta}}}{r^{\gamma-\beta}}\bigg(1+\frac{r}{\rho(x_0)}\bigg)^{\theta}.
\end{equation*}
\end{prop}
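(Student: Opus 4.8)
The plan is to decompose $\mathbb R^d$ into the central ball together with the surrounding dyadic annuli, to estimate each piece by the defining inequality \eqref{wangdef2} applied on the concentric ball of the appropriate radius, and finally to sum a geometric series whose convergence is governed exactly by the hypothesis $\gamma>\beta+\theta$. Concretely, set $B_0:=B(x_0,r)$ and $B_k:=B(x_0,2^kr)$ for $k\geq1$, so that $\mathbb R^d=B_0\cup\bigcup_{k\geq1}(B_k\setminus B_{k-1})$, and write $N(x):=f(x)-\mathrm{ess\,inf}_{B_0}f$ for the numerator appearing in the statement.

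First I would treat the central ball. On $B_0$ the integrand $N$ is nonnegative a.e. and the denominator is at least $r^{d+\gamma}$, so the contribution of $B_0$ is at most $r^{-(d+\gamma)}\int_{B_0}N(x)\,dx$. Applying \eqref{wangdef2} on $B_0$ and using $|B_0|^{1+\beta/d}=c_d^{\,1+\beta/d}r^{d+\beta}$ bounds this by $C\,r^{\beta-\gamma}\|f\|_{\mathcal{C}^{\beta,\ast}_{\rho,\theta}}\bigl(1+r/\rho(x_0)\bigr)^{\theta}$, which is already of the desired form. The key device for the annular pieces is the monotonicity of the essential infimum: since $B_0\subset B_k$ we have $\mathrm{ess\,inf}_{B_k}f\leq\mathrm{ess\,inf}_{B_0}f$, whence pointwise $N(x)\leq f(x)-\mathrm{ess\,inf}_{B_k}f=:M_k(x)$, and $M_k\geq0$ a.e. on $B_k$. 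On $B_k\setminus B_{k-1}$ the denominator is at least $(2^{k-1}r)^{d+\gamma}$, and since the kernel is positive one gets, pointwise, $N(x)/\bigl(r^{d+\gamma}+|x-x_0|^{d+\gamma}\bigr)\leq M_k(x)/(2^{k-1}r)^{d+\gamma}$; integrating and enlarging the domain from the annulus to all of $B_k$ (legitimate because the majorant is nonnegative) bounds the $k$-th annular contribution by $(2^{k-1}r)^{-(d+\gamma)}\int_{B_k}M_k(x)\,dx$.

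Next I would invoke \eqref{wangdef2} on the concentric ball $B_k=B(x_0,2^kr)$, giving $\int_{B_k}M_k\leq |B_k|^{1+\beta/d}\|f\|_{\mathcal{C}^{\beta,\ast}_{\rho,\theta}}\bigl(1+2^kr/\rho(x_0)\bigr)^{\theta}$. Using $|B_k|^{1+\beta/d}\approx(2^kr)^{d+\beta}$ and the elementary estimate $1+2^kr/\rho(x_0)\leq 2^k\bigl(1+r/\rho(x_0)\bigr)$, the $k$-th annular contribution is controlled by $C\,2^{k(\beta+\theta-\gamma)}\,r^{\beta-\gamma}\|f\|_{\mathcal{C}^{\beta,\ast}_{\rho,\theta}}\bigl(1+r/\rho(x_0)\bigr)^{\theta}$. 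Summing over $k\geq1$ reduces to the geometric series $\sum_{k\geq1}2^{k(\beta+\theta-\gamma)}$, which converges precisely because $\gamma>\beta+\theta$ and whose sum is a finite constant depending only on $d,\beta,\gamma$ (and the fixed $\theta$). Adding the central contribution then yields the asserted bound.

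The step I expect to require the most care is the annular estimate. The subtlety is that away from $\mathcal{B}$ the numerator $N(x)$ is a signed quantity, so one cannot estimate crudely; the monotonicity replacement $\mathrm{ess\,inf}_{B_0}f\rightsquigarrow\mathrm{ess\,inf}_{B_k}f$ is what converts $N$ into the nonnegative, radius-adapted quantity $M_k$ to which \eqref{wangdef2} applies on $B_k$. One must then track carefully how the growth factor $\bigl(1+2^kr/\rho(x_0)\bigr)^{\theta}$ produces a harmless $2^{k\theta}$ that combines with the volume ratio $(2^k)^{d+\beta}/(2^{k-1})^{d+\gamma}$ to leave the decay $2^{k(\beta+\theta-\gamma)}$, so that summability holds exactly under the stated condition $\gamma>\beta+\theta$; this interplay is the heart of the argument.
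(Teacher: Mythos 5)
Your proposal is correct and takes essentially the same route as the paper's own proof: the identical decomposition of $\mathbb R^d$ into the central ball plus dyadic annuli, the same use of the monotonicity $\mathrm{ess\,inf}_{B_k}f\leq\mathrm{ess\,inf}_{B_0}f$ to replace the signed numerator by the nonnegative quantity adapted to $B_k$, and the same geometric series $\sum_k 2^{k(\beta+\theta-\gamma)}$ whose convergence is exactly the hypothesis $\gamma>\beta+\theta$. If anything, your ordering (majorize by $M_k\geq0$ first, then enlarge the domain of integration to $B_k$) is slightly more careful than the paper's, which enlarges the domain before making that replacement.
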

\begin{proof}
Suppose that $f\in \mathcal{C}^{\beta,\ast}_{\rho,\theta}(\mathbb R^d)$ with $0<\theta<\infty$ and $0<\beta<1$. We then decompose $\mathbb R^d$ into a geometrically increasing sequence of concentric balls, and obtain
\begin{equation*}
\begin{split}
\int_{\mathbb R^d}\frac{\Big[f(x)-\underset{y\in\mathcal{B}}{\mathrm{ess\,inf}}\,f(y)\Big]}{r^{d+\gamma}+|x-x_0|^{d+\gamma}}\,dx
&=\int_{B(x_0,r)}\frac{\Big[f(x)-\underset{y\in\mathcal{B}}{\mathrm{ess\,inf}}\,f(y)\Big]}{r^{d+\gamma}+|x-x_0|^{d+\gamma}}\,dx\\
&+\sum_{j=1}^{\infty}\int_{B(x_0,2^jr)\setminus B(x_0,2^{j-1}r)}
\frac{\Big[f(x)-\underset{y\in\mathcal{B}}{\mathrm{ess\,inf}}\,f(y)\Big]}{r^{d+\gamma}+|x-x_0|^{d+\gamma}}\,dx\\
&:=\mathrm{I}+\mathrm{II}.
\end{split}
\end{equation*}
For the first term, we have
\begin{equation*}
\begin{split}
\mathrm{I}&\leq\frac{|B(0,1)|}{r^{\gamma}}
\frac{1}{|B(x_0,r)|}\int_{B(x_0,r)}\Big[f(x)-\underset{y\in\mathcal{B}}{\mathrm{ess\,inf}}\,f(y)\Big]\,dx\\
&\leq \frac{|B(0,1)|^{1+\beta/d}}{r^{\gamma-\beta}}\|f\|_{\mathcal{C}^{\beta,\ast}_{\rho,\theta}}\bigg(1+\frac{r}{\rho(x_0)}\bigg)^{\theta}.
\end{split}
\end{equation*}
For the second term, we know that $|x-x_0|\geq 2^{j-1}r$ when $x\in B(x_0,2^jr)\setminus B(x_0,2^{j-1}r)$. Consequently,
\begin{equation*}
\begin{split}
\mathrm{II}
&\leq\sum_{j=1}^{\infty}\frac{1}{(2^{j-1}r)^{d+\gamma}}
\int_{B(x_0,2^jr)}\Big[f(x)-\underset{y\in\mathcal{B}}{\mathrm{ess\,inf}}\,f(y)\Big]\,dx.
\end{split}
\end{equation*}
Observe that for an arbitrary fixed ball $\mathcal{B}$,
\begin{equation*}
\begin{split}
&f(x)-\underset{y\in\mathcal{B}}{\mathrm{ess\,inf}}\,f(y)\\
&=f(x)-\underset{y\in 2^j\mathcal{B}}{\mathrm{ess\,inf}}\,f(y)
+\underset{y\in 2^j\mathcal{B}}{\mathrm{ess\,inf}}\,f(y)-\underset{y\in\mathcal{B}}{\mathrm{ess\,inf}}\,f(y)\\
&\leq f(x)-\underset{y\in 2^j\mathcal{B}}{\mathrm{ess\,inf}}\,f(y),\quad j=1,2,3,\dots.
\end{split}
\end{equation*}
Hence
\begin{equation*}
\begin{split}
\mathrm{II}&\leq\sum_{j=1}^{\infty}\frac{1}{(2^{j-1}r)^{d+\gamma}}
\int_{B(x_0,2^jr)}\Big[f(x)-\underset{y\in 2^j\mathcal{B}}{\mathrm{ess\,inf}}\,f(y)\Big]\,dx\\
&\leq\sum_{j=1}^{\infty}\frac{|B(x_0,2^jr)|^{1+\beta/d}}{(2^{j-1}r)^{d+\gamma}}
\|f\|_{\mathcal{C}^{\beta,\ast}_{\rho,\theta}}\bigg(1+\frac{2^jr}{\rho(x_0)}\bigg)^{\theta}\\
&\leq\sum_{j=1}^{\infty}(2^j)^{\beta-\gamma}\cdot(2^j)^{\theta}\frac{2^{d+\gamma}|B(0,1)|^{1+\beta/d}}{r^{\gamma-\beta}}
\|f\|_{\mathcal{C}^{\beta,\ast}_{\rho,\theta}}\bigg(1+\frac{r}{\rho(x_0)}\bigg)^{\theta}.
\end{split}
\end{equation*}
Note that $\beta-\gamma+\theta<0$, so the desired result follows immediately.
\end{proof}

We can also obtain analogous estimates for the space $\mathrm{BLO}_{\rho,\theta}(\mathbb R^d)$.
\begin{prop}
Suppose that $f\in\mathrm{BLO}_{\rho,\theta}(\mathbb R^d)$ with $0<\theta<\infty$. Then for any $\gamma>\theta$, there is a constant $C>0$ depending only on $d$ and $\gamma$ such that for any cube $\mathcal{Q}=Q(x_0,r)$ in $\mathbb R^d$, we have
\begin{equation*}
\int_{\mathbb R^d}\frac{\Big[f(x)-\underset{y\in\mathcal{B}}{\mathrm{ess\,inf}}\;f(y)\Big]}{r^{d+\gamma}+|x-x_0|^{d+\gamma}}\,dx
\leq C\cdot\frac{\|f\|_{\mathrm{BLO}_{\rho,\theta}}}{r^{\gamma}}\bigg(1+\frac{r}{\rho(x_0)}\bigg)^{\theta}.
\end{equation*}
\end{prop}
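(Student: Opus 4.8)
The plan is to mirror the proof of the preceding proposition, now specialized to the exponent $\beta=0$ and with concentric cubes in place of balls; the only cost of replacing the level sets of the Euclidean kernel by cubes is a purely dimensional constant, which will be absorbed into $C$. First I would decompose $\mathbb R^d$ into the cube $\mathcal{Q}=Q(x_0,r)$ together with the dyadic annuli $Q(x_0,2^jr)\setminus Q(x_0,2^{j-1}r)$ for $j\geq1$, and split the integral accordingly into a local piece $\mathrm{I}$ and a tail $\mathrm{II}$.

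For the local piece, on $Q(x_0,r)$ the denominator $r^{d+\gamma}+|x-x_0|^{d+\gamma}$ is bounded below by $r^{d+\gamma}$, so $\mathrm{I}\leq r^{-(d+\gamma)}\int_{Q(x_0,r)}\big[f(x)-\mathrm{ess\,inf}_{y\in\mathcal{Q}}\,f(y)\big]\,dx$. Writing $|Q(x_0,r)|=r^d$ and invoking the definition of $\|f\|_{\mathrm{BLO}_{\rho,\theta}}$ directly gives
\begin{equation*}
\mathrm{I}\leq \frac{1}{r^{\gamma}}\,\|f\|_{\mathrm{BLO}_{\rho,\theta}}\bigg(1+\frac{r}{\rho(x_0)}\bigg)^{\theta},
\end{equation*}
which is already of the desired form.

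For the tail, the essential point is the monotonicity of the essential infimum under inclusion: since $\mathcal{Q}\subset 2^j\mathcal{Q}$ we have $\mathrm{ess\,inf}_{y\in 2^j\mathcal{Q}}\,f(y)\leq \mathrm{ess\,inf}_{y\in\mathcal{Q}}\,f(y)$, hence on each annulus $f(x)-\mathrm{ess\,inf}_{y\in\mathcal{Q}}\,f(y)\leq f(x)-\mathrm{ess\,inf}_{y\in 2^j\mathcal{Q}}\,f(y)$, so the integrand may be replaced by the nonnegative quantity adapted to the larger cube $2^j\mathcal{Q}=Q(x_0,2^jr)$. On the $j$-th annulus $|x-x_0|$ is bounded below by a dimensional constant times $2^jr$, so the denominator is bounded below by a constant times $(2^jr)^{d+\gamma}$; estimating $\mathrm{II}$ by $\sum_{j\geq1}(2^jr)^{-(d+\gamma)}\int_{Q(x_0,2^jr)}\big[f(x)-\mathrm{ess\,inf}_{y\in 2^j\mathcal{Q}}\,f(y)\big]\,dx$ and applying the BLO definition on the cube $Q(x_0,2^jr)$ (whose measure $(2^jr)^d$ cancels the factor $(2^jr)^{-d}$) yields a bound by $\sum_{j\geq1}(2^jr)^{-\gamma}\|f\|_{\mathrm{BLO}_{\rho,\theta}}\big(1+2^jr/\rho(x_0)\big)^{\theta}$.

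The summation is the only place the hypothesis $\gamma>\theta$ enters. Using $1+2^jr/\rho(x_0)\leq 2^j\big(1+r/\rho(x_0)\big)$, valid because $2^j\geq1$, the tail is controlled by $r^{-\gamma}\|f\|_{\mathrm{BLO}_{\rho,\theta}}\big(1+r/\rho(x_0)\big)^{\theta}\sum_{j\geq1}2^{\,j(\theta-\gamma)}$, and the geometric series converges precisely because $\theta-\gamma<0$. Combining $\mathrm{I}$ and $\mathrm{II}$ gives the claim with $C$ depending only on $d$ and $\gamma$ (through the sum of the series and the cube-geometry constant). I do not anticipate a genuine obstacle: the argument is routine once the essinf-monotonicity replacement is in place, and the only points needing care are that replacement step and the bookkeeping of the $2^j$ powers, which is exactly what forces the convergence condition $\gamma>\theta$.
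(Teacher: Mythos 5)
Your proof is correct and follows essentially the same route as the paper: the paper proves the preceding Campanato proposition by splitting into a central piece plus dyadic annuli, replacing $\mathrm{ess\,inf}$ over the small set by $\mathrm{ess\,inf}$ over the dilated set, applying the defining inequality on each dilate, and summing the geometric series via $1+2^jr/\rho(x_0)\leq 2^j(1+r/\rho(x_0))$, and it explicitly omits the BLO case as the analogous argument with $\beta=0$ --- which is exactly what you carried out (with cubes in place of balls, costing only dimensional constants). The only points worth noting are cosmetic: the $\mathcal{B}$ in the statement's integrand is a typo for $\mathcal{Q}$, which you resolved in the natural way, and the constant produced by the series $\sum_{j\geq1}2^{j(\theta-\gamma)}$ in fact depends on $\gamma-\theta$ (a feature shared by the paper's own proof of the preceding proposition).
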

We omit the proof here.

\section{Main theorems}

Let $N_0$ be the same constant as in Lemma \ref{N0} and let $\eta$ be the same number as in Lemma \ref{comparelem}. We are now in a position to give the main results of this paper.
\begin{thm}\label{mainthm1}
Let $1\leq p<\infty$ and $\omega\in A^{\rho,\theta_2}_p(\mathbb R^d)$ with $0<\theta_2<\infty$. Then the following
statements are true.
\begin{enumerate}
\item If $f\in \mathrm{BLO}_{\rho,\theta_1}(\mathbb R^d)$ with $0<\theta_1<\infty$, then for any cube $\mathcal{Q}=Q(x_0,r)\subset\mathbb R^d$,
\begin{equation*}
\begin{split}
&\bigg(\frac{1}{\omega(\mathcal{Q})}\int_{\mathcal{Q}}\Big[f(x)-\underset{y\in\mathcal{Q}}{\mathrm{ess\,inf}}\,f(y)\Big]^p\omega(x)\,dx\bigg)^{1/p}\\
&\leq C[\omega]_{A^{\rho,\theta_2}_p}\bigg(1+\frac{r}{\rho(x_0)}\bigg)^{(N_0+1)\theta_1+\eta/p}\|f\|_{\mathrm{BLO}_{\rho,\theta_1}}.
\end{split}
\end{equation*}
\item Conversely, if there exists a constant $C>0$ such that for any cube $\mathcal{Q}=Q(x_0,r)\subset\mathbb R^d$,
\begin{equation}\label{assum1}
\bigg(\frac{1}{\omega(\mathcal{Q})}\int_{\mathcal{Q}}\Big[f(x)-\underset{y\in\mathcal{Q}}{\mathrm{ess\,inf}}\,f(y)\Big]^p\omega(x)\,dx\bigg)^{1/p}\leq C[\omega]_{A^{\rho,\theta_2}_p}\bigg(1+\frac{r}{\rho(x_0)}\bigg)^{\theta_1-\theta_2}
\end{equation}
holds for some $\theta_1>0$, then $f\in\mathrm{BLO}_{\rho,\theta_1}(\mathbb R^d)$, and
\begin{equation*}
\|f\|_{\mathrm{BLO}_{\rho,\theta_1}}\leq C[\omega]_{A^{\rho,\theta_2}_p}.
\end{equation*}
\end{enumerate}
\end{thm}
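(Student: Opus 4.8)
The plan is to prove the two implications by different means: the direct estimate (1) rests on combining the John--Nirenberg inequality of Lemma~\ref{expblo} with the measure-comparison estimate of Lemma~\ref{comparelem}, while the converse (2) is a duality argument using H\"older's inequality and the very definition of the $A^{\rho,\theta_2}_p$ characteristic constant. For (1), abbreviate $m_{\mathcal{Q}}:=\underset{y\in\mathcal{Q}}{\mathrm{ess\,inf}}\,f(y)$ and $g:=f-m_{\mathcal{Q}}\ge 0$ on $\mathcal{Q}$, and set $E_\lambda:=\{x\in\mathcal{Q}:g(x)>\lambda\}$. I would first rewrite the weighted average through its distribution function with respect to $\omega$,
\[
\frac{1}{\omega(\mathcal{Q})}\int_{\mathcal{Q}}g(x)^p\,\omega(x)\,dx
=p\int_0^\infty\lambda^{p-1}\frac{\omega(E_\lambda)}{\omega(\mathcal{Q})}\,d\lambda .
\]
Next I would feed in Lemma~\ref{comparelem}, which replaces $\omega(E_\lambda)/\omega(\mathcal{Q})$ by $C(1+r/\rho(x_0))^{\eta}(|E_\lambda|/|\mathcal{Q}|)^{\delta}$, and then Lemma~\ref{expblo}, which bounds $|E_\lambda|/|\mathcal{Q}|$ by $\overline{C}_1\exp\{-(1+r/\rho(x_0))^{-(N_0+1)\theta_1}\overline{C}_2\lambda/\|f\|_{\mathrm{BLO}_{\rho,\theta_1}}\}$. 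Raising the latter to the power $\delta\in(0,1)$ keeps an exponential with rate $A:=\delta\overline{C}_2(1+r/\rho(x_0))^{-(N_0+1)\theta_1}/\|f\|_{\mathrm{BLO}_{\rho,\theta_1}}$.

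It then remains to evaluate the elementary integral $p\int_0^\infty\lambda^{p-1}e^{-A\lambda}\,d\lambda=\Gamma(p+1)A^{-p}$, which converges for every $p\ge 1$ thanks to the exponential decay. The factor $A^{-p}$ supplies $(1+r/\rho(x_0))^{(N_0+1)\theta_1 p}$ together with $\|f\|_{\mathrm{BLO}_{\rho,\theta_1}}^p$; multiplying by the $(1+r/\rho(x_0))^{\eta}$ coming from the comparison step and taking the $p$-th root produces exactly the exponent $(N_0+1)\theta_1+\eta/p$ and the linear dependence on $\|f\|_{\mathrm{BLO}_{\rho,\theta_1}}$. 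The characteristic constant $[\omega]_{A^{\rho,\theta_2}_p}$ enters only through the constant $C$ of Lemma~\ref{comparelem}, which is itself inherited from the reverse H\"older estimate of Lemma~\ref{rh}.

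For the converse (2), I would estimate the plain average of $g$ by duality against $\omega$. Writing $g=g\,\omega^{1/p}\cdot\omega^{-1/p}$ and applying H\"older with exponents $p$ and $p'$ (for $p=1$ one uses instead $g\le g\,\omega/\underset{x\in\mathcal{Q}}{\mathrm{ess\,inf}}\,\omega(x)$ together with the ess-inf form of $A^{\rho,\theta_2}_1$), one obtains
\[
\frac{1}{|\mathcal{Q}|}\int_{\mathcal{Q}}g\,dx
\le\Big(\frac{1}{\omega(\mathcal{Q})}\int_{\mathcal{Q}}g^p\omega\,dx\Big)^{1/p}
\Big(\frac{1}{|\mathcal{Q}|}\int_{\mathcal{Q}}\omega\,dx\Big)^{1/p}
\Big(\frac{1}{|\mathcal{Q}|}\int_{\mathcal{Q}}\omega^{-p'/p}\,dx\Big)^{1/p'} .
\]
Hypothesis~\eqref{assum1} controls the first factor by $C[\omega]_{A^{\rho,\theta_2}_p}(1+r/\rho(x_0))^{\theta_1-\theta_2}$, while the last two factors are precisely the $A^{\rho,\theta_2}_p$ quotient of $\mathcal{Q}$, bounded by $[\omega]_{A^{\rho,\theta_2}_p}(1+r/\rho(x_0))^{\theta_2}$ (here I invoke the remark that cubes may replace balls in the definition of the class). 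The two powers of $\theta_2$ cancel, leaving $C[\omega]_{A^{\rho,\theta_2}_p}(1+r/\rho(x_0))^{\theta_1}$, whence $f\in\mathrm{BLO}_{\rho,\theta_1}$ with $\|f\|_{\mathrm{BLO}_{\rho,\theta_1}}\le C[\omega]_{A^{\rho,\theta_2}_p}$.

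The genuine difficulty has already been front-loaded into Lemma~\ref{expblo}; granted that, part (1) is a distributional computation and part (2) is essentially a single application of H\"older. The step demanding the most care is the bookkeeping of the critical-radius factors $(1+r/\rho(x_0))$: I must verify that the John--Nirenberg rate contributes precisely $(N_0+1)\theta_1$ and the measure comparison contributes exactly $\eta/p$, so that they assemble into the claimed exponent rather than a larger power, and that in the converse the characteristic-constant dependence read off from \eqref{assum1} and from the $A^{\rho,\theta_2}_p$ quotient combines consistently with the stated bound. A secondary point is the $p=1$ endpoint of the converse, where the ess-inf formulation of $A^{\rho,\theta_2}_1$ replaces the $p'$-H\"older factor.
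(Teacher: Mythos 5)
Your proposal is correct and takes essentially the same route as the paper's own proof: part (1) combines Lemma~\ref{expblo} with Lemma~\ref{comparelem} via the layer-cake formula and a Gamma-function integral, producing exactly the exponent $(N_0+1)\theta_1+\eta/p$, and part (2) is the same H\"older duality argument against the $A^{\rho,\theta_2}_p$ quotient of $\mathcal{Q}$, with the $p=1$ endpoint handled through the ess-inf form of the class, just as in the paper's Case 2. No substantive differences to report.
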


\begin{thm}\label{mainthm2}
Let $1\leq p<q<\infty$ and $\omega\in A^{\rho,\theta_2}_{p,q}(\mathbb R^d)$ with $0<\theta_2<\infty$. Then the following
statements are true.
\begin{enumerate}
\item If $f\in \mathrm{BLO}_{\rho,\theta_1}(\mathbb R^d)$ with $0<\theta_1<\infty$, then for any cube $\mathcal{Q}=Q(x_0,r)\subset\mathbb R^d$,
\begin{equation*}
\begin{split}
&\bigg(\frac{1}{\omega^q(\mathcal{Q})}\int_{\mathcal{Q}}
\Big[f(x)-\underset{y\in\mathcal{Q}}{\mathrm{ess\,inf}}\,f(y)\Big]^q\omega(x)^q\,dx\bigg)^{1/q}\\
&\leq C[\omega]_{A^{\rho,\theta_2}_{p,q}}\bigg(1+\frac{r}{\rho(x_0)}\bigg)^{(N_0+1)\theta_1+\eta/q}\|f\|_{\mathrm{BLO}_{\rho,\theta_1}}.
\end{split}
\end{equation*}
\item Conversely, if there exists a constant $C>0$ such that for any cube $\mathcal{Q}=Q(x_0,r)\subset\mathbb R^d$,
\begin{equation}\label{assum2}
\bigg(\frac{1}{\omega^q(\mathcal{Q})}\int_{\mathcal{Q}}\Big[f(x)-\underset{y\in\mathcal{Q}}{\mathrm{ess\,inf}}\,f(y)\Big]^q\omega(x)^q\,dx\bigg)^{1/q}\leq C[\omega]_{A^{\rho,\theta_2}_{p,q}}\bigg(1+\frac{r}{\rho(x_0)}\bigg)^{\theta_1-\theta_2}
\end{equation}
holds for some $\theta_1>0$, then $f\in \mathrm{BLO}_{\rho,\theta_1}(\mathbb R^d)$, and
\begin{equation*}
\|f\|_{\mathrm{BLO}_{\rho,\theta_1}}\leq C[\omega]_{A^{\rho,\theta_2}_{p,q}}.
\end{equation*}
\end{enumerate}
\end{thm}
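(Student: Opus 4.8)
The plan is to treat the two directions separately: statement (1) is the substantive direction and will follow the same scheme as Theorem \ref{mainthm1}(1), after reducing the power weight $\omega^q$ to an ordinary Schr\"odinger $A^{\rho,\cdot}_t$ weight by means of Lemma \ref{Apq}; statement (2) is the elementary converse, which will follow from H\"older's inequality together with the defining $A^{\rho,\theta_2}_{p,q}$ condition. Throughout I would write $g(x):=f(x)-\underset{y\in\mathcal{Q}}{\mathrm{ess\,inf}}\,f(y)\ge 0$ and $\widetilde{\omega}:=\omega^q$, and recall that by the remark permitting balls to be replaced by cubes the weight conditions may be tested on cubes.

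For (1) I would first invoke Lemma \ref{Apq}: when $p>1$ the hypothesis $\omega\in A^{\rho,\theta_2}_{p,q}$ gives $\widetilde{\omega}\in A^{\rho,\widetilde{\theta_2}}_t$ with $t=1+q/p'$, while for $p=1$ it gives $\widetilde{\omega}\in A^{\rho,\theta_2 q}_1$; in either case $\widetilde{\omega}$ lies in some class $A^{\rho,\cdot}_t$, so Lemma \ref{comparelem} applies to $\widetilde{\omega}$ and furnishes constants $0<\delta<1$ and $\eta>1$ with $\widetilde{\omega}(E)/\widetilde{\omega}(\mathcal{Q})\le C(|E|/|\mathcal{Q}|)^{\delta}(1+r/\rho(x_0))^{\eta}$ for every $E\subset\mathcal{Q}$, the constant $C$ depending on $[\omega]_{A^{\rho,\theta_2}_{p,q}}$. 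I would then pass to the distribution function,
\[
\int_{\mathcal{Q}}g(x)^q\,\widetilde{\omega}(x)\,dx
=q\int_0^{\infty}\lambda^{q-1}\,\widetilde{\omega}\big(\{x\in\mathcal{Q}:g(x)>\lambda\}\big)\,d\lambda,
\]
bound each level set first by the comparison estimate and then by the John--Nirenberg inequality of Lemma \ref{expblo}, and finally evaluate the resulting $\lambda$-integral as a Gamma integral. The exponential decay rate $(1+r/\rho(x_0))^{-(N_0+1)\theta_1}\overline{C}_2/\|f\|_{\mathrm{BLO}_{\rho,\theta_1}}$ supplied by Lemma \ref{expblo}, raised to the power $\delta$ and integrated against $\lambda^{q-1}$, produces the factor $(1+r/\rho(x_0))^{(N_0+1)\theta_1 q}\|f\|_{\mathrm{BLO}_{\rho,\theta_1}}^q$, while the spare factor $(1+r/\rho(x_0))^{\eta}$ from the comparison step becomes $(1+r/\rho(x_0))^{\eta/q}$ after taking the $q$-th root and dividing by $\widetilde{\omega}(\mathcal{Q})$; together these give exactly the claimed exponent $(N_0+1)\theta_1+\eta/q$.

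For (2) I would estimate the plain average directly by H\"older's inequality with exponents $q$ and $q'$:
\[
\frac{1}{|\mathcal{Q}|}\int_{\mathcal{Q}}g\,dx
\le\frac{1}{|\mathcal{Q}|}\Big(\int_{\mathcal{Q}}g^q\omega^q\,dx\Big)^{1/q}\Big(\int_{\mathcal{Q}}\omega^{-q'}\,dx\Big)^{1/q'}.
\]
The first factor is controlled by hypothesis \eqref{assum2}, which introduces $\omega^q(\mathcal{Q})^{1/q}$ and the power $(1+r/\rho(x_0))^{\theta_1-\theta_2}$. Since $p<q$ forces $q'<p'$, the normalized $L^{q'}$ average of $\omega^{-1}$ is dominated by its normalized $L^{p'}$ average, so the second factor is bounded by $(\int_{\mathcal{Q}}\omega^{-p'})^{1/p'}$ up to a power of $|\mathcal{Q}|$; this is precisely the combination entering the definition of $[\omega]_{A^{\rho,\theta_2}_{p,q}}$. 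After the volume powers cancel ($|\mathcal{Q}|^{1/q+1/q'}/|\mathcal{Q}|=1$), applying the $A^{\rho,\theta_2}_{p,q}$ condition turns the remaining weight factors into $[\omega]_{A^{\rho,\theta_2}_{p,q}}(1+r/\rho(x_0))^{\theta_2}$, the two $\theta_2$-powers cancel, and one obtains $\frac{1}{|\mathcal{Q}|}\int_{\mathcal{Q}}g\,dx\le C[\omega]_{A^{\rho,\theta_2}_{p,q}}(1+r/\rho(x_0))^{\theta_1}$, that is, $f\in\mathrm{BLO}_{\rho,\theta_1}$ with the stated bound. For $p=1$ the factor $\omega^{-p'}$ is replaced by $\big(\underset{x\in\mathcal{Q}}{\mathrm{ess\,inf}}\,\omega(x)\big)^{-1}$, obtained by bounding $\omega^{-1}$ pointwise before H\"older, and the $A^{\rho,\theta_2}_{1,q}$ condition is used.

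The main obstacle is the exponent bookkeeping in direction (1): one must check that a single application of Lemma \ref{Apq} places $\omega^q$ in a class $A^{\rho,\cdot}_t$ for which the self-improving exponent $\delta$ of Lemma \ref{comparelem} is available, and then track the several powers of $(1+r/\rho(x_0))$ — those from the reverse-H\"older/comparison step, from raising the John--Nirenberg tail to the power $\delta$, and from the Gamma evaluation — so that they assemble into the precise exponent $(N_0+1)\theta_1+\eta/q$. In direction (2) the only non-automatic point is the passage from $\omega^{-q'}$ to $\omega^{-p'}$ via $q'<p'$, which is exactly what makes $A^{\rho,\theta_2}_{p,q}$, rather than $A^{\rho,\theta_2}_q$, the correct hypothesis.
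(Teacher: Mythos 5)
Your proposal is correct and follows essentially the same route as the paper: part (1) combines Lemma \ref{Apq}, Lemma \ref{comparelem} applied to $\widetilde{\omega}=\omega^q$, the John--Nirenberg inequality of Lemma \ref{expblo}, and a distribution-function/Gamma-integral computation with exactly the same exponent bookkeeping yielding $(N_0+1)\theta_1+\eta/q$. The only cosmetic difference is in part (2), where you apply H\"older with exponents $(q,q')$ and then pass from the normalized $L^{q'}$ to the $L^{p'}$ average of $\omega^{-1}$, whereas the paper applies H\"older with $(p,p')$ and then raises the $g\omega$ average from exponent $p$ to $q$; both orderings produce the identical final combination and use the same monotonicity-of-averages fact.
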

\begin{proof}[Proof of Theorem $\ref{mainthm1}$]
(1) Let $f\in \mathrm{BLO}_{\rho,\theta_1}(\mathbb R^d)$ with $0<\theta_1<\infty$. According to Lemma \ref{expblo}, there are two constants $\overline{C}_1,\overline{C}_2>0$ such that for any $\lambda>0$ and for any cube $\mathcal{Q}\subset\mathbb R^d$,
\begin{equation*}
\begin{split}
&\Big|\Big\{x\in \mathcal{Q}:\Big[f(x)-\underset{y\in\mathcal{Q}}{\mathrm{ess\,inf}}\,f(y)\Big]>\lambda\Big\}\Big|\\
&\leq \overline{C}_1|\mathcal{Q}|\exp\bigg\{-\bigg(1+\frac{r}{\rho(x_0)}\bigg)^{-(N_0+1)\theta_1}\frac{\overline{C}_2\lambda}{\|f\|_{\mathrm{BLO}_{\rho,\theta_1}}}\bigg\}.
\end{split}
\end{equation*}
Since $\omega\in A^{\rho,\theta_2}_p(\mathbb R^d)$ with $0<\theta_2<\infty$ and $1\leq p<\infty$, by using Lemma \ref{comparelem}, we get
\begin{equation*}
\begin{split}
&\omega\Big(\Big\{x\in \mathcal{Q}:\Big[f(x)-\underset{y\in\mathcal{Q}}{\mathrm{ess\,inf}}\,f(y)\Big]>\lambda\Big\}\Big)\\
&\leq C\cdot \overline{C}_1^{\delta}\omega(\mathcal{Q})
\exp\bigg\{-\bigg(1+\frac{r}{\rho(x_0)}\bigg)^{-(N_0+1)\theta_1}\frac{\overline{C}_2\delta\lambda}{\|f\|_{\mathrm{BLO}_{\rho,\theta_1}}}\bigg\}
\times\bigg(1+\frac{r}{\rho(x_0)}\bigg)^{\eta}.
\end{split}
\end{equation*}
Hence, for any cube $\mathcal{Q}\subset\mathbb R^d$,
\begin{align*}
&\bigg(\frac{1}{\omega(\mathcal{Q})}\int_{\mathcal{Q}}\Big[f(x)-\underset{y\in\mathcal{Q}}{\mathrm{ess\,inf}}\,f(y)\Big]^p\omega(x)\,dx\bigg)^{1/p}\nonumber\\
&=\bigg(\frac{1}{\omega(\mathcal{Q})}\int_0^{\infty}p\lambda^{p-1}\omega\Big(\Big\{x\in \mathcal{Q}:\Big[f(x)-\underset{y\in\mathcal{Q}}{\mathrm{ess\,inf}}\,f(y)\Big]>\lambda\Big\}\Big)\,d\lambda\bigg)^{1/p}\nonumber\\
&\leq\bigg(C\cdot \overline{C}_1^{\delta}\int_0^{\infty}p\lambda^{p-1}\exp\bigg\{-\bigg(1+\frac{r}{\rho(x_0)}\bigg)^{-(N_0+1)\theta_1}
\frac{\overline{C}_2\delta\lambda}{\|f\|_{\mathrm{BLO}_{\rho,\theta_1}}}\bigg\}d\lambda\bigg)^{1/p}\nonumber\\
\end{align*}
\begin{align*}
&\times\bigg(1+\frac{r}{\rho(x_0)}\bigg)^{\eta/p}.\nonumber\\
\end{align*}
By making the substitution
\begin{equation*}
\mu=\bigg(1+\frac{r}{\rho(x_0)}\bigg)^{-(N_0+1)\theta_1}\frac{\overline{C}_2\delta\lambda}{\|f\|_{\mathrm{BLO}_{\rho,\theta_1}}},
\end{equation*}
we can deduce that
\begin{align}\label{11}
&\bigg(\frac{1}{\omega(\mathcal{Q})}\int_{\mathcal{Q}}\Big[f(x)-\underset{y\in\mathcal{Q}}{\mathrm{ess\,inf}}\,f(y)\Big]^p
\omega(x)\,dx\bigg)^{1/p}\nonumber\\
&\leq\Big(C\cdot \overline{C}_1^{\delta}p\Big)^{1/p}
\bigg(1+\frac{r}{\rho(x_0)}\bigg)^{(N_0+1)\theta_1}\frac{\|f\|_{\mathrm{BLO}_{\rho,\theta_1}}}{\overline{C}_2\delta}
\times\bigg(1+\frac{r}{\rho(x_0)}\bigg)^{\eta/p}\nonumber\\
&\times\bigg(\int_0^{\infty}\mu^{p-1}e^{-\mu}\,d\mu\bigg)^{1/p}\nonumber\\
&\leq\big(C\cdot p\Gamma(p)\big)^{1/p}
\cdot\frac{\overline{C}_1^{\delta/p}}{\overline{C}_2\delta}\bigg(1+\frac{r}{\rho(x_0)}\bigg)^{(N_0+1)\theta_1+\eta/p}\|f\|_{\mathrm{BLO}_{\rho,\theta_1}}.
\end{align}
This gives the desired inequality. Let us now turn to the proof of $(2)$. The proof of $(2)$ will be divided into two cases.

\textbf{Case 1.} When $1<p<\infty$, by using H\"older's inequality, the condition $\omega\in A^{\rho,\theta_2}_p(\mathbb R^d)$ and \eqref{assum1}, we obtain
\begin{align}\label{12}
&\frac{1}{|\mathcal{Q}|}\int_{\mathcal{Q}}\Big[f(x)-\underset{y\in\mathcal{Q}}{\mathrm{ess\,inf}}\,f(y)\Big]\,dx\nonumber\\
&=\frac{1}{|\mathcal{Q}|}\int_{\mathcal{Q}}\Big[f(x)-\underset{y\in\mathcal{Q}}{\mathrm{ess\,inf}}\,f(y)\Big]\omega(x)^{1/p}
\cdot\omega(x)^{-1/p}\,dx\nonumber\\
&\leq\frac{1}{|\mathcal{Q}|}\bigg(\int_{\mathcal{Q}}\Big[f(x)-\underset{y\in\mathcal{Q}}{\mathrm{ess\,inf}}\,f(y)\Big]^p\omega(x)\,dx\bigg)^{1/p}
\bigg(\int_{\mathcal{Q}}\omega(x)^{-{p'}/p}\,dx\bigg)^{1/{p'}}\nonumber\\
&\leq C\bigg(1+\frac{r}{\rho(x_0)}\bigg)^{\theta_1-\theta_2}\nonumber\\
&\times\bigg(\frac{1}{|\mathcal{Q}|}\int_{\mathcal{Q}}\omega(x)\,dx\bigg)^{1/p}\bigg(\frac{1}{|\mathcal{Q}|}\int_{\mathcal{Q}}\omega(x)^{-{p'}/p}\,dx\bigg)^{1/{p'}}\nonumber\\
&\leq C[\omega]_{A^{\rho,\theta_2}_p}\bigg(1+\frac{r}{\rho(x_0)}\bigg)^{\theta_1}.
\end{align}
\textbf{Case 2.} When $p=1$, then it follows directly from the condition $\omega\in A^{\rho,\theta_2}_1(\mathbb R^d)$ and \eqref{assum1} that
\begin{align}\label{13}
&\frac{1}{|\mathcal{Q}|}\int_{\mathcal{Q}}\Big[f(x)-\underset{y\in\mathcal{Q}}{\mathrm{ess\,inf}}\,f(y)\Big]\,dx\nonumber\\
&=\frac{1}{|\mathcal{Q}|}\int_{\mathcal{Q}}\Big[f(x)-\underset{y\in\mathcal{Q}}{\mathrm{ess\,inf}}\,f(y)\Big]\omega(x)
\cdot\omega(x)^{-1}\,dx\nonumber\\
&\leq\frac{1}{|\mathcal{Q}|}\bigg(\int_{\mathcal{Q}}\Big[f(x)-\underset{y\in\mathcal{Q}}{\mathrm{ess\,inf}}\,f(y)\Big]\omega(x)\,dx\bigg)
\bigg(\underset{x\in \mathcal{Q}}{\mbox{ess\,sup}}\,\omega(x)^{-1}\bigg)\nonumber\\
&\leq C\bigg(1+\frac{r}{\rho(x_0)}\bigg)^{\theta_1-\theta_2}
\times\bigg(\frac{1}{|\mathcal{Q}|}\int_{\mathcal{Q}}\omega(x)\,dx\bigg)\bigg(\underset{x\in \mathcal{Q}}{\mbox{ess\,inf}}\,\omega(x)\bigg)^{-1}\nonumber\\
&\leq C[\omega]_{A^{\rho,\theta_2}_1}\bigg(1+\frac{r}{\rho(x_0)}\bigg)^{\theta_1}.
\end{align}
Collecting the above estimates \eqref{12} and \eqref{13}, we conclude the proof of Theorem \ref{mainthm1}.
\end{proof}

\begin{proof}[Proof of Theorem \ref{mainthm2}]
(1) Let $f\in \mathrm{BLO}_{\rho,\theta_1}(\mathbb R^d)$ with $0<\theta_1<\infty$. According to Lemma \ref{expblo}, there are two constants $\overline{C}_1,\overline{C}_2>0$ such that for any $\lambda>0$ and for any cube $\mathcal{Q}\subset\mathbb R^d$,
\begin{equation*}
\begin{split}
&\Big|\Big\{x\in \mathcal{Q}:\Big[f(x)-\underset{y\in\mathcal{Q}}{\mathrm{ess\,inf}}\,f(y)\Big]>\lambda\Big\}\Big|\\
&\leq \overline{C}_1|\mathcal{Q}|
\exp\bigg\{-\bigg(1+\frac{r}{\rho(x_0)}\bigg)^{-(N_0+1)\theta_1}\frac{\overline{C}_2\lambda}{\|f\|_{\mathrm{BLO}_{\rho,\theta_1}}}\bigg\}.
\end{split}
\end{equation*}
Since $\omega\in A^{\rho,\theta_2}_{p,q}(\mathbb R^d)$ with $0<\theta_2<\infty$ and $1\leq p<q<\infty$, by using Lemma \ref{Apq} and Lemma \ref{comparelem}, we have
\begin{equation*}
\begin{split}
&\omega^q\Big(\Big\{x\in \mathcal{Q}:\Big[f(x)-\underset{y\in\mathcal{Q}}{\mathrm{ess\,inf}}\,f(y)\Big]>\lambda\Big\}\Big)\\
&\leq C\cdot \overline{C}_1^{\delta}\omega^q(\mathcal{Q})
\exp\bigg\{-\bigg(1+\frac{r}{\rho(x_0)}\bigg)^{-(N_0+1)\theta_1}\frac{\overline{C}_2\delta\lambda}{\|f\|_{\mathrm{BLO}_{\rho,\theta_1}}}\bigg\}
\times\bigg(1+\frac{r}{\rho(x_0)}\bigg)^{\eta}.
\end{split}
\end{equation*}
Here the symbol $\omega^{\gamma}(E)$ for $\gamma>0$ is given in Section \ref{intro}. Hence, for any cube $\mathcal{Q}\subset\mathbb R^d$,
\begin{align*}
&\bigg(\frac{1}{\omega^q(\mathcal{Q})}\int_{\mathcal{Q}}
\Big[f(x)-\underset{y\in\mathcal{Q}}{\mathrm{ess\,inf}}\,f(y)\Big]^q\omega(x)^q\,dx\bigg)^{1/q}\nonumber\\
&=\bigg(\frac{1}{\omega^q(\mathcal{Q})}\int_0^{\infty}q\lambda^{q-1}\omega^q\Big(\Big\{x\in \mathcal{Q}:\Big[f(x)-\underset{y\in\mathcal{Q}}{\mathrm{ess\,inf}}\,f(y)\Big]>\lambda\Big\}\Big)\,d\lambda\bigg)^{1/q}\nonumber\\
&\leq\bigg(C\cdot \overline{C}_1^{\delta}\int_0^{\infty}q\lambda^{q-1}\exp\bigg\{-\bigg(1+\frac{r}{\rho(x_0)}\bigg)^{-(N_0+1)\theta_1}
\frac{\overline{C}_2\delta\lambda}{\|f\|_{\mathrm{BLO}_{\rho,\theta_1}}}\bigg\}d\lambda\bigg)^{1/q}\nonumber\\
\end{align*}
\begin{align*}
&\times\bigg(1+\frac{r}{\rho(x_0)}\bigg)^{\eta/q}.\nonumber\\
\end{align*}

By making the substitution
\begin{equation*}
\nu=\bigg(1+\frac{r}{\rho(x_0)}\bigg)^{-(N_0+1)\theta_1}
\frac{\overline{C}_2\delta\lambda}{\|f\|_{\mathrm{BLO}_{\rho,\theta_1}}},
\end{equation*}
we can see that
\begin{align}\label{21}
&\bigg(\frac{1}{\omega^q(\mathcal{Q})}\int_{\mathcal{Q}}
\Big[f(x)-\underset{y\in\mathcal{Q}}{\mathrm{ess\,inf}}\,f(y)\Big]^q\omega(x)^q\,dx\bigg)^{1/q}\nonumber\\
&\leq \big(C\cdot q\big)^{1/q}
\cdot\frac{\overline{C}_1^{\delta/q}}{\overline{C}_2\delta}\bigg(1+\frac{r}{\rho(x_0)}\bigg)^{(N_0+1)\theta_1+\eta/q}\|f\|_{\mathrm{BLO}_{\rho,\theta_1}}
\times\bigg(\int_0^{\infty}\nu^{q-1}e^{-\nu}\,d\nu\bigg)^{1/q}\nonumber\\
&\leq \big(C\cdot q\Gamma(q)\big)^{1/q}
\cdot\frac{\overline{C}_1^{\delta/q}}{\overline{C}_2\delta}
\bigg(1+\frac{r}{\rho(x_0)}\bigg)^{(N_0+1)\theta_1+\eta/q}\|f\|_{\mathrm{BLO}_{\rho,\theta_1}}.
\end{align}
This yields the desired estimate. Let us now turn to the proof of $(2)$. As before, the proof of $(2)$ will be divided into two cases.

\textbf{Case 1.} When $1<p<\infty$, it then follows directly from the H\"{o}lder inequality that
\begin{align}\label{22}
&\frac{1}{|\mathcal{Q}|}\int_{\mathcal{Q}}\Big[f(x)-\underset{y\in\mathcal{Q}}{\mathrm{ess\,inf}}\,f(y)\Big]\,dx\\
&=\frac{1}{|\mathcal{Q}|}\int_{\mathcal{Q}}\Big[f(x)-\underset{y\in\mathcal{Q}}{\mathrm{ess\,inf}}\,f(y)\Big]\omega(x)\cdot\omega(x)^{-1}\,dx\nonumber\\
&\leq\frac{1}{|\mathcal{Q}|}\bigg(\int_{\mathcal{Q}}\Big[f(x)-\underset{y\in\mathcal{Q}}{\mathrm{ess\,inf}}\,f(y)\Big]^p\omega(x)^p\,dx\bigg)^{1/p}
\bigg(\int_{\mathcal{Q}}\omega(x)^{-{p'}}\,dx\bigg)^{1/{p'}}\nonumber.
\end{align}
Moreover, by using the H\"{o}lder inequality again, we can see that when $1\leq p<q$,
\begin{equation}\label{above}
\bigg(\frac{1}{|\mathcal{Q}|}\int_{\mathcal{Q}}\Big[f(x)-\underset{y\in\mathcal{Q}}{\mathrm{ess\,inf}}\,f(y)\Big]^p\omega(x)^p\,dx\bigg)^{1/p}
\leq\bigg(\frac{1}{|\mathcal{Q}|}\int_{\mathcal{Q}}\Big[f(x)-\underset{y\in\mathcal{Q}}{\mathrm{ess\,inf}}\,f(y)\Big]^q\omega(x)^q\,dx\bigg)^{1/q}.
\end{equation}
Substituting the above inequality into \eqref{22}, we thus obtain
\begin{align}\label{221}
&\frac{1}{|\mathcal{Q}|}\int_{\mathcal{Q}}\Big[f(x)-\underset{y\in\mathcal{Q}}{\mathrm{ess\,inf}}\,f(y)\Big]\,dx\\
&\leq \frac{|\mathcal{Q}|^{1/p-1/q}}{|\mathcal{Q}|}\bigg(\int_{\mathcal{Q}}
\Big[f(x)-\underset{y\in\mathcal{Q}}{\mathrm{ess\,inf}}\,f(y)\Big]^q\omega(x)^q\,dx\bigg)^{1/q}
\bigg(\int_{\mathcal{Q}}\omega(x)^{-{p'}}\,dx\bigg)^{1/{p'}}\nonumber\\
&\leq C\bigg(1+\frac{r}{\rho(x_0)}\bigg)^{\theta_1-\theta_2}\nonumber\\
&\times\bigg(\frac{1}{|\mathcal{Q}|}\int_{\mathcal{Q}}\omega(x)^q\,dx\bigg)^{1/q}
\bigg(\frac{1}{|\mathcal{Q}|}\int_{\mathcal{Q}}\omega(x)^{-{p'}}\,dx\bigg)^{1/{p'}}\nonumber\\
&\leq C[\omega]_{A^{\rho,\theta_2}_{p,q}}\bigg(1+\frac{r}{\rho(x_0)}\bigg)^{\theta_1},
\end{align}
where in the last two inequalities we have used \eqref{assum2} and the definition of $A^{\rho,\theta_2}_{p,q}$, respectively.

\textbf{Case 2.} When $p=1$ and $1<q<\infty$, then we have
\begin{align*}
&\frac{1}{|\mathcal{Q}|}\int_{\mathcal{Q}}\Big[f(x)-\underset{y\in\mathcal{Q}}{\mathrm{ess\,inf}}\,f(y)\Big]\,dx\\
&=\frac{1}{|\mathcal{Q}|}\int_{\mathcal{Q}}\Big[f(x)-\underset{y\in\mathcal{Q}}{\mathrm{ess\,inf}}\,f(y)\Big]
\omega(x)\cdot\omega(x)^{-1}\,dx\nonumber\\
&\leq\frac{1}{|\mathcal{Q}|}\bigg(\int_{\mathcal{Q}}\Big[f(x)-\underset{y\in\mathcal{Q}}{\mathrm{ess\,inf}}\,f(y)\Big]\omega(x)\,dx\bigg)
\bigg(\underset{x\in \mathcal{Q}}{\mbox{ess\,sup}}\,\omega(x)^{-1}\bigg).\nonumber\\
\end{align*}
From the previous estimate \eqref{above}(with $p=1$), it actually follows that
\begin{align}\label{222}
&\frac{1}{|\mathcal{Q}|}\int_{\mathcal{Q}}\Big[f(x)-\underset{y\in\mathcal{Q}}{\mathrm{ess\,inf}}\,f(y)\Big]\,dx\nonumber\\
&\leq \frac{|\mathcal{Q}|^{1-1/q}}{|\mathcal{Q}|}
\bigg(\int_{\mathcal{Q}}\Big[f(x)-\underset{y\in\mathcal{Q}}{\mathrm{ess\,inf}}\,f(y)\Big]^q\omega(x)^q\,dx\bigg)^{1/q}
\bigg(\underset{x\in \mathcal{Q}}{\mbox{ess\,sup}}\,\omega(x)^{-1}\bigg)\nonumber\\
&\leq C\bigg(1+\frac{r}{\rho(x_0)}\bigg)^{\theta_1-\theta_2}\nonumber
\times\bigg(\frac{1}{|\mathcal{Q}|}\int_{\mathcal{Q}}\omega(x)^q\,dx\bigg)^{1/q}
\bigg(\underset{x\in \mathcal{Q}}{\mbox{ess\,inf}}\,\omega(x)\bigg)^{-1}\nonumber\\
&\leq C[\omega]_{A^{\rho,\theta_2}_{1,q}}\bigg(1+\frac{r}{\rho(x_0)}\bigg)^{\theta_1},
\end{align}
where in the last two inequalities we have used \eqref{assum2} and the definition of $A^{\rho,\theta_2}_{1,q}$, respectively. Collecting the above estimates \eqref{221} and \eqref{222}, we finish the proof of Theorem \ref{mainthm2}.
\end{proof}

For any cube $\mathcal{Q}$ (or ball $\mathcal{B}$) in $\mathbb R^d$ and for $1<p<\infty$, by using H\"{o}lder's inequality, we have
\begin{equation*}
|\mathcal{Q}|=\int_{\mathcal{Q}}\omega(x)\cdot\omega(x)^{-1}\,dx
\leq\bigg(\int_{\mathcal{Q}}\omega(x)^p\,dx\bigg)^{1/p}\bigg(\int_{\mathcal{Q}}\omega(x)^{-p'}\,dx\bigg)^{1/{p'}}.
\end{equation*}
By the definition of $A^{\rho,\theta_2}_{p,q}$ weights, we get
\begin{equation*}
\begin{split}
\bigg(\int_{\mathcal{Q}}\omega(x)^q\,dx\bigg)^{1/q}
\bigg(\int_{\mathcal{Q}}\omega(x)^{-{p'}}\,dx\bigg)^{1/{p'}}
&\leq [\omega]_{A^{\rho,\theta_2}_{p,q}}|\mathcal{Q}|^{1/q+1/{p'}}\bigg(1+\frac{r}{\rho(x_0)}\bigg)^{\theta_2}.
\end{split}
\end{equation*}
Consequently,
\begin{equation}\label{wangh1}
\begin{split}
\bigg(\int_{\mathcal{Q}}\omega(x)^q\,dx\bigg)^{1/q}
&\leq[\omega]_{A^{\rho,\theta_2}_{p,q}}\frac{|\mathcal{Q}|^{1/q+1/{p'}}}{|\mathcal{Q}|}\bigg(1+\frac{r}{\rho(x_0)}\bigg)^{\theta_2}
\bigg(\int_{\mathcal{Q}}\omega(x)^p\,dx\bigg)^{1/p}\\
&=[\omega]_{A^{\rho,\theta_2}_{p,q}}\bigg(1+\frac{r}{\rho(x_0)}\bigg)^{\theta_2}
|\mathcal{Q}|^{1/q-1/p}\bigg(\int_{\mathcal{Q}}\omega(x)^p\,dx\bigg)^{1/p}.
\end{split}
\end{equation}
We remark that the above estimate also holds for the case $p=1$ and $\omega\in A^{\rho,\theta_2}_{1,q}(\mathbb R^d)$. Indeed, it is immediate that by definition
\begin{equation*}
\begin{split}
\bigg(\int_{\mathcal{Q}}\omega(x)^q\,dx\bigg)^{1/q}
&\leq [\omega]_{A^{\rho,\theta_2}_{1,q}}\bigg(1+\frac{r}{\rho(x_0)}\bigg)^{\theta_2}
|\mathcal{Q}|^{1/q}\underset{x\in \mathcal{Q}}{\mbox{ess\,inf}}\,\omega(x)\\
&\leq [\omega]_{A^{\rho,\theta_2}_{1,q}}\bigg(1+\frac{r}{\rho(x_0)}\bigg)^{\theta_2}
|\mathcal{Q}|^{1/q-1}\bigg(\int_{\mathcal{Q}}\omega(x)\,dx\bigg).
\end{split}
\end{equation*}
On the other hand, for $1\leq p<q$, it follows directly from H\"{o}lder's inequality that
\begin{equation*}
\bigg(\frac{1}{|\mathcal{Q}|}\int_{\mathcal{Q}}\omega(x)^p\,dx\bigg)^{1/p}
\leq\bigg(\frac{1}{|\mathcal{Q}|}\int_{\mathcal{Q}}\omega(x)^q\,dx\bigg)^{1/q},
\end{equation*}
which implies that
\begin{equation}\label{wangh2}
\bigg(\int_{\mathcal{Q}}\omega(x)^q\,dx\bigg)^{1/q}\geq|\mathcal{Q}|^{1/q-1/p}\bigg(\int_{\mathcal{Q}}\omega(x)^p\,dx\bigg)^{1/p}.
\end{equation}

As a consequence of \eqref{wangh1} and \eqref{wangh2}, we then obtain the following conclusions.
\begin{cor}\label{cor1}
Let $1\leq p<q<\infty$ and $\omega\in A^{\rho,\theta_2}_{p,q}(\mathbb R^d)$ with $0<\theta_2<\infty$. Then the following
statements are true.
\begin{enumerate}
\item If $f\in \mathrm{BLO}_{\rho,\theta_1}(\mathbb R^d)$ with $0<\theta_1<\infty$, then for any cube $\mathcal{Q}=Q(x_0,r)\subset\mathbb R^d$,
\begin{equation*}
\begin{split}
&\frac{|\mathcal{Q}|^{1/p-1/q}}{[\omega^p(\mathcal{Q})]^{1/p}}\bigg(\int_{\mathcal{Q}}
\Big[f(x)-\underset{y\in\mathcal{Q}}{\mathrm{ess\,inf}}\,f(y)\Big]^q\omega(x)^q\,dx\bigg)^{1/q}\\
&\leq C[\omega]_{A^{\rho,\theta_2}_{p,q}}\bigg(1+\frac{r}{\rho(x_0)}\bigg)^{(N_0+1)\theta_1+\theta_2+\eta/q}\|f\|_{\mathrm{BLO}_{\rho,\theta_1}}.
\end{split}
\end{equation*}
\item Conversely, if there exists a constant $C>0$ such that for any cube $\mathcal{Q}=Q(x_0,r)\subset\mathbb R^d$,
\begin{equation*}
\frac{|\mathcal{Q}|^{1/p-1/q}}{[\omega^p(\mathcal{Q})]^{1/p}}
\bigg(\int_{\mathcal{Q}}\Big[f(x)-\underset{y\in\mathcal{Q}}{\mathrm{ess\,inf}}\,f(y)\Big]^q\omega(x)^q\,dx\bigg)^{1/q}\leq C[\omega]_{A^{\rho,\theta_2}_{p,q}}\bigg(1+\frac{r}{\rho(x_0)}\bigg)^{\theta_1-\theta_2}
\end{equation*}
holds for some $\theta_1>0$, then $f\in \mathrm{BLO}_{\rho,\theta_1}(\mathbb R^d)$, and
\begin{equation*}
\|f\|_{\mathrm{BLO}_{\rho,\theta_1}}\leq C[\omega]_{A^{\rho,\theta_2}_{p,q}}.
\end{equation*}
\end{enumerate}
\end{cor}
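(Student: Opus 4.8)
The plan is to obtain Corollary \ref{cor1} as a direct consequence of Theorem \ref{mainthm2}, the only additional ingredient being the two-sided comparison between $[\omega^q(\mathcal{Q})]^{1/q}$ and $|\mathcal{Q}|^{1/q-1/p}[\omega^p(\mathcal{Q})]^{1/p}$ already recorded in \eqref{wangh1} and \eqref{wangh2}. The key observation is that the averaging factor $|\mathcal{Q}|^{1/p-1/q}/[\omega^p(\mathcal{Q})]^{1/p}$ normalizing the left-hand side of the corollary differs from the factor $1/[\omega^q(\mathcal{Q})]^{1/q}$ normalizing Theorem \ref{mainthm2} only by a quantity comparable to $[\omega]_{A^{\rho,\theta_2}_{p,q}}(1+r/\rho(x_0))^{\theta_2}$; this is exactly the content of \eqref{wangh1}--\eqref{wangh2}. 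Thus no fresh analytic work is required, and one simply transfers each half of Theorem \ref{mainthm2} through the appropriate side of the sandwich.

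For part (1) I would first rewrite \eqref{wangh1} in the equivalent form
\[
\frac{|\mathcal{Q}|^{1/p-1/q}}{[\omega^p(\mathcal{Q})]^{1/p}}
\leq [\omega]_{A^{\rho,\theta_2}_{p,q}}\Big(1+\frac{r}{\rho(x_0)}\Big)^{\theta_2}\frac{1}{[\omega^q(\mathcal{Q})]^{1/q}},
\]
then multiply both sides by the weighted integral $\Big(\int_{\mathcal{Q}}\big[f(x)-\underset{y\in\mathcal{Q}}{\mathrm{ess\,inf}}\,f(y)\big]^q\omega(x)^q\,dx\Big)^{1/q}$ and invoke the bound of Theorem \ref{mainthm2}(1). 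The spare factor $(1+r/\rho(x_0))^{\theta_2}$ produced in this step is precisely the source of the additional exponent $+\theta_2$ appearing in the corollary, so that the final power of $(1+r/\rho(x_0))$ is $(N_0+1)\theta_1+\theta_2+\eta/q$, as claimed.

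For part (2) I would argue in the opposite direction using \eqref{wangh2}, which yields $1/[\omega^q(\mathcal{Q})]^{1/q}\le |\mathcal{Q}|^{1/p-1/q}/[\omega^p(\mathcal{Q})]^{1/p}$. Consequently the hypothesis of the corollary dominates the normalized quantity occurring in the converse half of Theorem \ref{mainthm2}, so that assumption \eqref{assum2} holds with the same right-hand side; applying Theorem \ref{mainthm2}(2) then delivers $f\in\mathrm{BLO}_{\rho,\theta_1}(\mathbb R^d)$ together with the estimate $\|f\|_{\mathrm{BLO}_{\rho,\theta_1}}\le C[\omega]_{A^{\rho,\theta_2}_{p,q}}$.

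The substitution argument and the John--Nirenberg estimate (Lemma \ref{expblo}) behind Theorem \ref{mainthm2} have already done all the real work, so the only point requiring care — and the sole genuine obstacle — is the bookkeeping of the characteristic constant in part (1): chaining \eqref{wangh1} with Theorem \ref{mainthm2}(1) naturally produces two copies of $[\omega]_{A^{\rho,\theta_2}_{p,q}}$, and one must either carry this product explicitly or absorb the surplus factor into the constant $C$ in order to arrive at the single power displayed in the statement. Everything else is a routine matter of matching exponents and tracking the direction of the inequalities \eqref{wangh1}--\eqref{wangh2}.
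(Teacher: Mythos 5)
Your proposal is correct and is essentially the paper's own proof: the paper obtains Corollary \ref{cor1} precisely by combining Theorem \ref{mainthm2} with the sandwich estimates \eqref{wangh1} and \eqref{wangh2}, using \eqref{wangh1} to convert the normalization $[\omega^q(\mathcal{Q})]^{-1/q}$ into $|\mathcal{Q}|^{1/p-1/q}[\omega^p(\mathcal{Q})]^{-1/p}$ at the cost of the extra factor $(1+r/\rho(x_0))^{\theta_2}$ in part (1), and \eqref{wangh2} in the reverse direction to reduce part (2) to hypothesis \eqref{assum2}. Your observation that this chaining produces $[\omega]_{A^{\rho,\theta_2}_{p,q}}^2$ rather than a single power is a point the paper passes over silently; since the constant $C$ in these results already depends on the characteristic constant of $\omega$ (through Lemma \ref{rh} and Lemma \ref{comparelem}), the absorption you describe is consistent with the paper's bookkeeping, and your treatment is if anything the more careful one.
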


\begin{thm}\label{mainthm3}
Let $1\leq p<\infty$ and $\omega\in A^{\rho,\theta_2}_p(\mathbb R^d)$ with $0<\theta_2<\infty$. Then the following
statements are true.
\begin{enumerate}
\item If $f\in \mathcal{C}^{\beta,\ast}_{\rho,\theta_1}(\mathbb R^d)$ with $0<\beta<1$ and $0<\theta_1<\infty$, then for any ball $\mathcal{B}=B(x_0,r)$ in $\mathbb R^d$,
\begin{equation*}
\begin{split}
&\frac{1}{|\mathcal{B}|^{\beta/d}}\bigg(\frac{1}{\omega(\mathcal{B})}
\int_{\mathcal{B}}\Big[f(x)-\underset{y\in\mathcal{B}}{\mathrm{ess\,inf}}\,f(y)\Big]^p\omega(x)\,dx\bigg)^{1/p}\\
&\leq C[\omega]_{A^{\rho,\theta_2}_p}\bigg(1+\frac{r}{\rho(x_0)}\bigg)^{(N_0+1)\theta_1}\|f\|_{\mathcal{C}^{\beta,\ast}_{\rho,\theta_1}}.
\end{split}
\end{equation*}
\item Conversely, if there exists a constant $C>0$ such that for any ball $\mathcal{B}=B(x_0,r)\subset\mathbb R^d$ and $0<\beta<1$,
\begin{equation*}
\frac{1}{|\mathcal{B}|^{\beta/d}}\bigg(\frac{1}{\omega(\mathcal{B})}
\int_{\mathcal{B}}\Big[f(x)-\underset{y\in\mathcal{B}}{\mathrm{ess\,inf}}\,f(y)\Big]^p\omega(x)\,dx\bigg)^{1/p}\leq C[\omega]_{A^{\rho,\theta_2}_p}\bigg(1+\frac{r}{\rho(x_0)}\bigg)^{\theta_1-\theta_2}
\end{equation*}
holds for some $\theta_1>0$, then $f\in\mathcal{C}^{\beta,\ast}_{\rho,\theta_1}(\mathbb R^d)$, and
\begin{equation*}
\|f\|_{\mathcal{C}^{\beta,\ast}_{\rho,\theta_1}}\leq C[\omega]_{A^{\rho,\theta_2}_p}.
\end{equation*}
\end{enumerate}
\end{thm}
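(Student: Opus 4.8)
The plan is to prove the two directions separately, following the structure of the proof of Theorem~\ref{mainthm1} but replacing the John--Nirenberg inequality of Lemma~\ref{expblo} by the pointwise H\"older estimate of Lemma~\ref{beta}. This substitution is the reason the exponent in part~(1) is $(N_0+1)\theta_1$ rather than $(N_0+1)\theta_1+\eta/p$: the relevant quantity will be bounded \emph{pointwise}, so the weighted--measure comparison (Lemma~\ref{comparelem}) that generated the extra $\eta/p$ in Theorem~\ref{mainthm1} never enters.

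For part~(1) I would first use the inclusion $\mathcal{C}^{\beta,\ast}_{\rho,\theta_1}(\mathbb R^d)\subset\mathcal{C}^{\beta}_{\rho,\theta_1}(\mathbb R^d)$, with $\|f\|_{\mathcal{C}^{\beta}_{\rho,\theta_1}}\leq 2\|f\|_{\mathcal{C}^{\beta,\ast}_{\rho,\theta_1}}$, so that Lemma~\ref{beta} applies and gives
\[
|f(x)-f(y)|\leq C\|f\|_{\mathcal{C}^{\beta,\ast}_{\rho,\theta_1}}\,|x-y|^{\beta}
\bigg(1+\frac{|x-y|}{\rho(x)}+\frac{|x-y|}{\rho(y)}\bigg)^{\theta_1}
\]
for all $x\neq y$. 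Fix a ball $\mathcal{B}=B(x_0,r)$. For a.e. $x\in\mathcal{B}$ one has $f(x)-\underset{y\in\mathcal{B}}{\mathrm{ess\,inf}}\,f(y)\leq|f(x)-f(y)|$ for $y\in\mathcal{B}$ approaching the essential infimum, and since $x,y\in\mathcal{B}$ force $|x-y|<2r$, the factor $|x-y|^{\beta}\leq(2r)^{\beta}$. The $\rho$--factor is handled by \eqref{wangh3}: from $|x-y|/\rho(x)\leq 2(1+r/\rho(x))$ together with $1+r/\rho(x)\leq C_0(1+r/\rho(x_0))^{N_0+1}$ (and likewise for $y$) one gets
\[
\bigg(1+\frac{|x-y|}{\rho(x)}+\frac{|x-y|}{\rho(y)}\bigg)^{\theta_1}
\leq C\bigg(1+\frac{r}{\rho(x_0)}\bigg)^{(N_0+1)\theta_1}.
\]
Hence $f(x)-\underset{y\in\mathcal{B}}{\mathrm{ess\,inf}}\,f(y)\leq C\,r^{\beta}(1+r/\rho(x_0))^{(N_0+1)\theta_1}\|f\|_{\mathcal{C}^{\beta,\ast}_{\rho,\theta_1}}$ uniformly on $\mathcal{B}$. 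Raising to the $p$-th power and averaging against the probability measure $\omega\,dx/\omega(\mathcal{B})$ leaves this constant unchanged, and dividing by $|\mathcal{B}|^{\beta/d}\approx r^{\beta}$ cancels the factor $r^{\beta}$; since $[\omega]_{A^{\rho,\theta_2}_p}\geq 1$ (by H\"older's inequality applied inside the definition), the weight constant may be inserted for free and the estimate of part~(1) follows.

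For part~(2) I would reproduce the two--case H\"older argument from the proof of Theorem~\ref{mainthm1}(2), now carrying the factor $|\mathcal{B}|^{\beta/d}$. When $1<p<\infty$, H\"older's inequality yields
\[
\frac{1}{|\mathcal{B}|}\int_{\mathcal{B}}\Big[f(x)-\underset{y\in\mathcal{B}}{\mathrm{ess\,inf}}\,f(y)\Big]dx
\leq\frac{1}{|\mathcal{B}|}\bigg(\int_{\mathcal{B}}\big[\cdots\big]^{p}\omega\,dx\bigg)^{1/p}
\bigg(\int_{\mathcal{B}}\omega^{-p'/p}dx\bigg)^{1/p'}.
\]
Writing the first factor as $\omega(\mathcal{B})^{1/p}$ times the weighted average and invoking the hypothesis produces a factor $|\mathcal{B}|^{\beta/d}(1+r/\rho(x_0))^{\theta_1-\theta_2}$, after which the surviving product of averages $(\frac{1}{|\mathcal{B}|}\int\omega)^{1/p}(\frac{1}{|\mathcal{B}|}\int\omega^{-p'/p})^{1/p'}$ is bounded by $[\omega]_{A^{\rho,\theta_2}_p}(1+r/\rho(x_0))^{\theta_2}$ from the definition of $A^{\rho,\theta_2}_p$. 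The two powers $(1+r/\rho(x_0))^{\pm\theta_2}$ cancel, and after dividing by $|\mathcal{B}|^{\beta/d}$ one obtains $\frac{1}{|\mathcal{B}|^{1+\beta/d}}\int_{\mathcal{B}}[\cdots]\,dx\leq C[\omega]_{A^{\rho,\theta_2}_p}(1+r/\rho(x_0))^{\theta_1}$, which is exactly $f\in\mathcal{C}^{\beta,\ast}_{\rho,\theta_1}(\mathbb R^d)$ with the stated norm bound. The case $p=1$ is identical, with $(\int\omega^{-p'/p})^{1/p'}$ replaced by $(\underset{x\in\mathcal{B}}{\mathrm{ess\,inf}}\,\omega(x))^{-1}$ and the definition of $A^{\rho,\theta_2}_1$ used in place of that of $A^{\rho,\theta_2}_p$.

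The hard part will be the bookkeeping in part~(1): one must verify that the two--point $\rho$--factor of Lemma~\ref{beta}, evaluated at arbitrary $x,y$ in the \emph{same} ball, collapses into a single power of $(1+r/\rho(x_0))$ anchored at the center $x_0$. This is precisely where Lemma~\ref{N0}, in the form \eqref{wangh3}, does the work, and some care is required since $|x-y|$ may be comparable to the diameter $2r$ while $\rho(\cdot)$ varies over $\mathcal{B}$; it is also what pins the exponent at $(N_0+1)\theta_1$. Everything else reduces to routine H\"older manipulations entirely parallel to Theorem~\ref{mainthm1}.
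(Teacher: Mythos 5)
Your proposal is correct and follows essentially the same route as the paper: part (1) is proved exactly as in the paper via the inclusion $\mathcal{C}^{\beta,\ast}_{\rho,\theta_1}(\mathbb R^d)\subset\mathcal{C}^{\beta}_{\rho,\theta_1}(\mathbb R^d)$, the pointwise estimate of Lemma \ref{beta}, and \eqref{wangh3} to obtain a uniform bound on $f(x)-\underset{y\in\mathcal{B}}{\mathrm{ess\,inf}}\,f(y)$ over $\mathcal{B}$ that integrates trivially, while part (2) is the same two-case H\"older argument the paper invokes from Theorem \ref{mainthm1}(2). Your added remark that $[\omega]_{A^{\rho,\theta_2}_p}\geq 1$ (so the characteristic constant can be inserted for free in part (1)) is a useful detail the paper leaves implicit.
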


\begin{thm}\label{mainthm4}
Let $1\leq p<q<\infty$ and $\omega\in A^{\rho,\theta_2}_{p,q}(\mathbb R^d)$ with $0<\theta_2<\infty$. Then the following
statements are true.
\begin{enumerate}
\item If $f\in \mathcal{C}^{\beta,\ast}_{\rho,\theta_1}(\mathbb R^d)$ with $0<\beta<1$ and $0<\theta_1<\infty$, then for any ball $\mathcal{B}=B(x_0,r)$ in $\mathbb R^d$,
\begin{equation*}
\begin{split}
&\frac{1}{|\mathcal{B}|^{\beta/d}}\bigg(\frac{1}{\omega^q(\mathcal{B})}
\int_{\mathcal{B}}\Big[f(x)-\underset{y\in\mathcal{B}}{\mathrm{ess\,inf}}\,f(y)\Big]^q\omega(x)^q\,dx\bigg)^{1/q}\\
&\leq C[\omega]_{A^{\rho,\theta_2}_{p,q}}\bigg(1+\frac{r}{\rho(x_0)}\bigg)^{(N_0+1)\theta_1}\|f\|_{\mathcal{C}^{\beta,\ast}_{\rho,\theta_1}}.
\end{split}
\end{equation*}
\item Conversely, if there exists a constant $C>0$ such that for any ball $\mathcal{B}=B(x_0,r)\subset\mathbb R^d$ and $0<\beta<1$,
\begin{equation*}
\frac{1}{|\mathcal{B}|^{\beta/d}}\bigg(\frac{1}{\omega^q(\mathcal{B})}
\int_{\mathcal{B}}\Big[f(x)-\underset{y\in\mathcal{B}}{\mathrm{ess\,inf}}\,f(y)\Big]^q\omega(x)^q\,dx\bigg)^{1/q}\leq C[\omega]_{A^{\rho,\theta_2}_{p,q}}\bigg(1+\frac{r}{\rho(x_0)}\bigg)^{\theta_1-\theta_2}
\end{equation*}
holds for some $\theta_1>0$, then $f\in\mathcal{C}^{\beta,\ast}_{\rho,\theta_1}(\mathbb R^d)$, and
\begin{equation*}
\|f\|_{\mathcal{C}^{\beta,\ast}_{\rho,\theta_1}}\leq C[\omega]_{A^{\rho,\theta_2}_{p,q}}.
\end{equation*}
\end{enumerate}
\end{thm}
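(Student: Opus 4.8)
The argument splits into the forward estimate (1) and its converse (2), and it runs parallel to the proof of Theorem \ref{mainthm2}; the essential difference is that Campanato membership is controlled by the \emph{pointwise} H\"older estimate of Lemma \ref{beta} rather than by a John--Nirenberg inequality. This is precisely why Lemma \ref{comparelem} (and with it the extra exponent $\eta/q$) never enters, and why the growth factor in (1) is merely $(1+r/\rho(x_0))^{(N_0+1)\theta_1}$.

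For (1), I would first record that, exactly as in \eqref{bmoblo}, $\mathcal{C}^{\beta,\ast}_{\rho,\theta_1}(\mathbb R^d)\subset\mathcal{C}^{\beta}_{\rho,\theta_1}(\mathbb R^d)$ with $\|f\|_{\mathcal{C}^{\beta}_{\rho,\theta_1}}\le2\|f\|_{\mathcal{C}^{\beta,\ast}_{\rho,\theta_1}}$, so that Lemma \ref{beta} is available for $f$. Fix $\mathcal{B}=B(x_0,r)$ and take $x,z\in\mathcal{B}$, so that $|x-z|<2r$. By \eqref{wangh3} both $1+|x-z|/\rho(x)$ and $1+|x-z|/\rho(z)$ are dominated by $C(1+r/\rho(x_0))^{N_0+1}$, and since $|x-z|^{\beta}\le(2r)^{\beta}\approx|\mathcal{B}|^{\beta/d}$, Lemma \ref{beta} gives $|f(x)-f(z)|\le C|\mathcal{B}|^{\beta/d}(1+r/\rho(x_0))^{(N_0+1)\theta_1}\|f\|_{\mathcal{C}^{\beta,\ast}_{\rho,\theta_1}}$. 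Taking the essential infimum over $z\in\mathcal{B}$ turns this into the one-sided pointwise bound
\[
f(x)-\underset{y\in\mathcal{B}}{\mathrm{ess\,inf}}\,f(y)\le C|\mathcal{B}|^{\beta/d}\Big(1+\frac{r}{\rho(x_0)}\Big)^{(N_0+1)\theta_1}\|f\|_{\mathcal{C}^{\beta,\ast}_{\rho,\theta_1}}\quad\text{for a.e. }x\in\mathcal{B}.
\]
Because the right-hand side is constant in $x$, its normalized $L^q(\omega^q\,dx)$ average over $\mathcal{B}$ reproduces it; dividing by $|\mathcal{B}|^{\beta/d}$ gives the claim. (One checks $[\omega]_{A^{\rho,\theta_2}_{p,q}}\ge1$ by H\"older's inequality, so attaching this factor only enlarges the right-hand side.)

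For (2), I would transcribe the two-case computation \eqref{22}--\eqref{222} of Theorem \ref{mainthm2}, now phrased for balls and carrying the factor $|\mathcal{B}|^{\beta/d}$. When $1<p<q$, apply H\"older's inequality with exponents $p,p'$ to $\frac1{|\mathcal{B}|}\int_{\mathcal{B}}[f-\underset{y\in\mathcal{B}}{\mathrm{ess\,inf}}\,f]\,dx$ after inserting $\omega\cdot\omega^{-1}$; pass from the $L^p(\omega^p)$ average to the $L^q(\omega^q)$ average via \eqref{above} (at the cost of $|\mathcal{B}|^{1/p-1/q}$); invoke the hypothesis to bound $(\int_{\mathcal{B}}[f-\underset{y\in\mathcal{B}}{\mathrm{ess\,inf}}\,f]^q\omega^q)^{1/q}$ by $C[\omega]_{A^{\rho,\theta_2}_{p,q}}(1+r/\rho(x_0))^{\theta_1-\theta_2}|\mathcal{B}|^{\beta/d}(\int_{\mathcal{B}}\omega^q)^{1/q}$; and finally apply the definition of $A^{\rho,\theta_2}_{p,q}$ to the product $(\int_{\mathcal{B}}\omega^q)^{1/q}(\int_{\mathcal{B}}\omega^{-p'})^{1/p'}$, producing the factor $(1+r/\rho(x_0))^{\theta_2}$ that cancels the $-\theta_2$. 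The powers of $|\mathcal{B}|$ collapse, using $1/p+1/p'=1$, to exactly $|\mathcal{B}|^{\beta/d}$, so dividing by $|\mathcal{B}|^{1+\beta/d}$ gives $\|f\|_{\mathcal{C}^{\beta,\ast}_{\rho,\theta_1}}\le C[\omega]_{A^{\rho,\theta_2}_{p,q}}$. The case $p=1$ is identical, with $(\int_{\mathcal{B}}\omega^{-p'})^{1/p'}$ replaced by $(\underset{x\in\mathcal{B}}{\mathrm{ess\,inf}}\,\omega(x))^{-1}$ and the $A^{\rho,\theta_2}_{1,q}$ definition used at the last step.

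The forward direction is where the only real care is needed: one must upgrade the two-point estimate of Lemma \ref{beta} to a uniform one-sided control of $f-\underset{y\in\mathcal{B}}{\mathrm{ess\,inf}}\,f$ throughout $\mathcal{B}$, which rests on absorbing the pointwise critical radii $\rho(x),\rho(z)$ into $\rho(x_0)$ via \eqref{wangh3} and is exactly what produces the exponent $(N_0+1)\theta_1$. The converse is then routine bookkeeping once \eqref{above} and the $A^{\rho,\theta_2}_{p,q}$ condition are in hand.
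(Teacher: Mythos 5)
Your proposal is correct and takes essentially the same route as the paper: for part (1) the paper's proof simply says ``argue as in Theorem \ref{mainthm3},'' which is exactly your argument (the inclusion $\mathcal{C}^{\beta,\ast}_{\rho,\theta_1}\subset\mathcal{C}^{\beta}_{\rho,\theta_1}$, the pointwise bound from Lemma \ref{beta} upgraded via \eqref{wangh3} to a one-sided bound constant on $\mathcal{B}$, then averaging in $L^q(\omega^q)$), and for part (2) the paper says ``follow the same lines as Theorem \ref{mainthm2},'' which is exactly your two-case H\"older computation with \eqref{above} and the $A^{\rho,\theta_2}_{p,q}$ (resp. $A^{\rho,\theta_2}_{1,q}$) definition. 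Your write-up in fact fills in a detail the paper leaves implicit, namely that $[\omega]_{A^{\rho,\theta_2}_{p,q}}\geq1$ so that the characteristic constant can be attached to the right-hand side of (1).
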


\begin{proof}[Proof of Theorem \ref{mainthm3}]
Following along the same lines as that of Theorem \ref{mainthm1}, we can also prove the second part (2). We only need to show the first part (1).
For an arbitrary fixed ball $\mathcal{B}=B(x_0,r)$ with $x_0\in\mathbb R^d$ and $r\in(0,\infty)$, first observe that
\begin{equation*}
\Big[f(x)-\underset{y\in\mathcal{B}}{\mathrm{ess\,inf}}\,f(y)\Big]\leq\underset{y\in\mathcal{B}}{\mathrm{ess\,sup}}\,\big|f(x)-f(y)\big|
\end{equation*}
holds for any $x\in \mathcal{B}$. We also mention that $\mathcal{C}^{\beta,\ast}_{\rho,\theta_1}(\mathbb R^d)$ is a subspace of $\mathcal{C}^{\beta}_{\rho,\theta_1}(\mathbb R^d)$, and
\begin{equation}\label{999}
\|f\|_{\mathcal{C}^{\beta}_{\rho,\theta_1}}\leq 2\|f\|_{\mathcal{C}^{\beta,\ast}_{\rho,\theta_1}}.
\end{equation}
Since
\begin{equation*}
\begin{split}
&\frac{1}{|\mathcal{B}|^{1+\beta/d}}\int_{\mathcal{B}}\big|f(x)-f_{\mathcal{B}}\big|\,dx\\
&=\frac{1}{|\mathcal{B}|^{1+\beta/d}}\int_{\mathcal{B}}\Big|f(x)
-\underset{y\in\mathcal{B}}{\mathrm{ess\,inf}}\,f(y)+\underset{y\in\mathcal{B}}{\mathrm{ess\,inf}}\,f(y)-f_{\mathcal{B}}\Big|\,dx\\
&\leq\frac{1}{|\mathcal{B}|^{1+\beta/d}}\int_{\mathcal{B}}\Big[f(x)-\underset{y\in\mathcal{B}}{\mathrm{ess\,inf}}\,f(y)\Big]\,dx
+\frac{1}{|\mathcal{B}|^{\beta/d}}\Big|\underset{y\in\mathcal{B}}{\mathrm{ess\,inf}}\,f(y)-f_{\mathcal{B}}\Big|\\
&\leq\frac{2}{|\mathcal{B}|^{1+\beta/d}}\int_{\mathcal{B}}\Big[f(x)-\underset{y\in\mathcal{B}}{\mathrm{ess\,inf}}\,f(y)\Big]\,dx,
\end{split}
\end{equation*}
multiplying both sides of the above inequality by $(1+r/{\rho(x_0)})^{-\theta}$ and then taking the supremum over all balls $B\subseteq\mathbb R^d$, we get \eqref{999}. In view of Lemma \ref{beta} and \eqref{999}, one can see that for any $x,y\in \mathcal{B}$,
\begin{equation*}
\begin{split}
|f(x)-f(y)|&\leq C\|f\|_{\mathcal{C}^{\beta}_{\rho,\theta_1}}|x-y|^{\beta}
\bigg(1+\frac{|x-y|}{\rho(x)}+\frac{|x-y|}{\rho(y)}\bigg)^{\theta_1}\\
&\leq C|\mathcal{B}|^{\beta/d}\|f\|_{\mathcal{C}^{\beta,\ast}_{\rho,\theta_1}}
\bigg(1+\frac{2r}{\rho(x)}+\frac{2r}{\rho(y)}\bigg)^{\theta_1}\\
&\leq C|\mathcal{B}|^{\beta/d}\|f\|_{\mathcal{C}^{\beta,\ast}_{\rho,\theta_1}}
\bigg(1+\frac{r}{\rho(x)}+\frac{r}{\rho(y)}\bigg)^{\theta_1}.
\end{split}
\end{equation*}
This, together with the estimate \eqref{wangh3}, gives us that
\begin{equation*}
|f(x)-f(y)|\leq C|\mathcal{B}|^{\beta/d}\|f\|_{\mathcal{C}^{\beta,\ast}_{\rho,\theta_1}}
\bigg(1+\frac{r}{\rho(x_0)}\bigg)^{(N_0+1)\theta_1}.
\end{equation*}
Hence, for any $x\in \mathcal{B}$,
\begin{equation*}
\Big[f(x)-\underset{y\in\mathcal{B}}{\mathrm{ess\,inf}}\,f(y)\Big]
\leq C|\mathcal{B}|^{\beta/d}\|f\|_{\mathcal{C}^{\beta,\ast}_{\rho,\theta_1}}
\bigg(1+\frac{r}{\rho(x_0)}\bigg)^{(N_0+1)\theta_1}.
\end{equation*}
Therefore,
\begin{equation*}
\frac{1}{|\mathcal{B}|^{\beta/d}}\bigg(\frac{1}{\omega(\mathcal{B})}
\int_{\mathcal{B}}\Big[f(x)-\underset{y\in\mathcal{B}}{\mathrm{ess\,inf}}\,f(y)\Big]^p\omega(x)\,dx\bigg)^{1/p}
\leq C\bigg(1+\frac{r}{\rho(x_0)}\bigg)^{(N_0+1)\theta_1}\|f\|_{\mathcal{C}^{\beta,\ast}_{\rho,\theta_1}}.
\end{equation*}
This completes the proof of Theorem \ref{mainthm3}.
\end{proof}

\begin{proof}[Proof of Theorem \ref{mainthm4}]
Following along the same lines as that of Theorem \ref{mainthm2}, we can also prove the second part (2). So we only need to show the first part (1). For an arbitrary fixed ball $\mathcal{B}=B(x_0,r)$, arguing as in the proof of Theorem \ref{mainthm3}, we can also obtain analogous estimate below.
\begin{equation*}
\frac{1}{|\mathcal{B}|^{\beta/d}}\bigg(\frac{1}{\omega^q(\mathcal{B})}
\int_{\mathcal{B}}\Big[f(x)-\underset{y\in\mathcal{B}}{\mathrm{ess\,inf}}\,f(y)\Big]^q\omega(x)^q\,dx\bigg)^{1/q}
\leq C\bigg(1+\frac{r}{\rho(x_0)}\bigg)^{(N_0+1)\theta_1}\|f\|_{\mathcal{C}^{\beta,\ast}_{\rho,\theta_1}}.
\end{equation*}
This concludes the proof of Theorem \ref{mainthm4}.
\end{proof}

In view of the estimates \eqref{wangh1} and \eqref{wangh2}, we immediately obtain the following results.
\begin{cor}\label{cor2}
Let $1\leq p<q<\infty$ and $\omega\in A^{\rho,\theta_2}_{p,q}(\mathbb R^d)$ with $0<\theta_2<\infty$. Then the following
statements are true.
\begin{enumerate}
\item If $f\in \mathcal{C}^{\beta,\ast}_{\rho,\theta_1}(\mathbb R^d)$ with $0<\beta<1$ and $0<\theta_1<\infty$, then for any ball $\mathcal{B}=B(x_0,r)\subset\mathbb R^d$,
\begin{equation*}
\begin{split}
&\frac{|\mathcal{B}|^{1/p-1/q-\beta/d}}{[\omega^p(\mathcal{B})]^{1/p}}
\bigg(\int_{\mathcal{B}}\Big[f(x)-\underset{y\in\mathcal{B}}{\mathrm{ess\,inf}}\,f(y)\Big]^q\omega(x)^q\,dx\bigg)^{1/q}\\
&\leq C[\omega]_{A^{\rho,\theta_2}_{p,q}}\bigg(1+\frac{r}{\rho(x_0)}\bigg)^{(N_0+1)\theta_1+\theta_2}
\|f\|_{\mathcal{C}^{\beta,\ast}_{\rho,\theta_1}}.
\end{split}
\end{equation*}
\item Conversely, if there exists a constant $C>0$ such that for any ball $\mathcal{B}=B(x_0,r)\subset\mathbb R^d$ and $0<\beta<1$,
\begin{equation*}
\frac{|\mathcal{B}|^{1/p-1/q-\beta/d}}{[\omega^p(\mathcal{B})]^{1/p}}
\bigg(\int_{\mathcal{B}}\Big[f(x)-\underset{y\in\mathcal{B}}{\mathrm{ess\,inf}}\,f(y)\Big]^q\omega(x)^q\,dx\bigg)^{1/q}\leq C[\omega]_{A^{\rho,\theta_2}_{p,q}}\bigg(1+\frac{r}{\rho(x_0)}\bigg)^{\theta_1-\theta_2}
\end{equation*}
holds for some $\theta_1>0$, then $f\in \mathcal{C}^{\beta,\ast}_{\rho,\theta_1}(\mathbb R^d)$, and
\begin{equation*}
\|f\|_{\mathcal{C}^{\beta,\ast}_{\rho,\theta_1}}\leq C[\omega]_{A^{\rho,\theta_2}_{p,q}}.
\end{equation*}
\end{enumerate}
\end{cor}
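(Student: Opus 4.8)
The plan is to derive Corollary~\ref{cor2} directly from Theorem~\ref{mainthm4}, combined with the two comparison inequalities \eqref{wangh1} and \eqref{wangh2} that relate $\bigl(\int_{\mathcal{B}}\omega(x)^q\,dx\bigr)^{1/q}$ and $\bigl(\int_{\mathcal{B}}\omega(x)^p\,dx\bigr)^{1/p}$ on a common ball. The decisive observation is that these two inequalities point in opposite directions: \eqref{wangh1} supplies an upper bound carrying an extra factor $[\omega]_{A^{\rho,\theta_2}_{p,q}}(1+r/\rho(x_0))^{\theta_2}$ and will drive the forward implication~(1), while \eqref{wangh2} supplies the matching lower bound and will drive the converse~(2). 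Since both \eqref{wangh1} and \eqref{wangh2} are derived for balls as well as cubes, passing from the cube formulation to the ball formulation of the corollary causes no difficulty.

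For part~(1), I would first write the left-hand side of the corollary as the quantity controlled by Theorem~\ref{mainthm4}(1) multiplied by a correction factor. Setting
\[
I:=\left(\int_{\mathcal{B}}\Big[f(x)-\underset{y\in\mathcal{B}}{\mathrm{ess\,inf}}\,f(y)\Big]^q\omega(x)^q\,dx\right)^{1/q},
\]
one has the exact identity
\[
\frac{|\mathcal{B}|^{1/p-1/q-\beta/d}}{[\omega^p(\mathcal{B})]^{1/p}}\,I
=\left(\frac{1}{|\mathcal{B}|^{\beta/d}}\,\frac{I}{[\omega^q(\mathcal{B})]^{1/q}}\right)\cdot
\frac{|\mathcal{B}|^{1/p-1/q}\,[\omega^q(\mathcal{B})]^{1/q}}{[\omega^p(\mathcal{B})]^{1/p}}.
\]
The first factor on the right is precisely the expression estimated in Theorem~\ref{mainthm4}(1), and the second factor is bounded by $[\omega]_{A^{\rho,\theta_2}_{p,q}}(1+r/\rho(x_0))^{\theta_2}$ by virtue of \eqref{wangh1}. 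Multiplying the two bounds and collecting the powers of $(1+r/\rho(x_0))$ yields the claim, the exponent $(N_0+1)\theta_1$ of the theorem being augmented by the additional $\theta_2$ coming from \eqref{wangh1}.

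For part~(2), the plan is to show that the hypothesis of the corollary forces the hypothesis of Theorem~\ref{mainthm4}(2), after which the conclusion $f\in\mathcal{C}^{\beta,\ast}_{\rho,\theta_1}(\mathbb R^d)$ with the stated norm bound is immediate. Using the same factorization in reverse, the left-hand side of Theorem~\ref{mainthm4}(2) equals the left-hand side of the corollary multiplied by $[\omega^p(\mathcal{B})]^{1/p}\big/\bigl(|\mathcal{B}|^{1/p-1/q}[\omega^q(\mathcal{B})]^{1/q}\bigr)$, and by \eqref{wangh2} this quotient is at most $1$ (the powers of $|\mathcal{B}|$ cancel exactly). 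Hence the assumed bound in the corollary dominates the left-hand side of Theorem~\ref{mainthm4}(2), so that theorem applies verbatim and returns $\|f\|_{\mathcal{C}^{\beta,\ast}_{\rho,\theta_1}}\le C[\omega]_{A^{\rho,\theta_2}_{p,q}}$.

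The argument is essentially bookkeeping, so I do not anticipate a genuine obstacle; the one point requiring care is keeping straight which of \eqref{wangh1}, \eqref{wangh2} is invoked in each direction, and checking that the exponents $1/p-1/q$ and $1/q-1/p$ cancel so that no stray power of $|\mathcal{B}|$ survives. A secondary bookkeeping issue is the precise power of $[\omega]_{A^{\rho,\theta_2}_{p,q}}$: the factor supplied by \eqref{wangh1} multiplies the one already present in Theorem~\ref{mainthm4}(1), and any surplus power of the characteristic constant is absorbed into the implicit constant $C$.
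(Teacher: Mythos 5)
Your plan is exactly the paper's own: the paper gives no separate proof of Corollary \ref{cor2}, only the remark that it follows from Theorem \ref{mainthm4} ``in view of the estimates \eqref{wangh1} and \eqref{wangh2}'', and your factorization of the left-hand side into the quantity controlled by Theorem \ref{mainthm4} times the ratio $|\mathcal{B}|^{1/p-1/q}[\omega^q(\mathcal{B})]^{1/q}\big/[\omega^p(\mathcal{B})]^{1/p}$ implements precisely that. Your part (2) is complete and correct: by \eqref{wangh2} the correction factor is at most $1$, so the corollary's hypothesis forces the hypothesis of Theorem \ref{mainthm4}(2) with the same constant, and that theorem yields $f\in\mathcal{C}^{\beta,\ast}_{\rho,\theta_1}(\mathbb R^d)$ with $\|f\|_{\mathcal{C}^{\beta,\ast}_{\rho,\theta_1}}\leq C[\omega]_{A^{\rho,\theta_2}_{p,q}}$.

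The one genuine defect is the treatment of the characteristic constant in part (1). As written, your two factors multiply to $C[\omega]_{A^{\rho,\theta_2}_{p,q}}^2(1+r/\rho(x_0))^{(N_0+1)\theta_1+\theta_2}\|f\|_{\mathcal{C}^{\beta,\ast}_{\rho,\theta_1}}$, and the surplus power of $[\omega]_{A^{\rho,\theta_2}_{p,q}}$ cannot be ``absorbed into the implicit constant $C$'': by H\"older's inequality (applied on balls with $r\to0$) one always has $[\omega]_{A^{\rho,\theta_2}_{p,q}}\geq1$, and it can be arbitrarily large, so $C[\omega]^2$ is strictly weaker than $C[\omega]$ when $C$ is required to be independent of $\omega$. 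The repair is immediate, and is what the paper implicitly relies on: the proof of Theorem \ref{mainthm4}(1) (via Lemma \ref{beta}, \eqref{999} and \eqref{wangh3}) actually establishes the stronger, $[\omega]$-free inequality
\begin{equation*}
\frac{1}{|\mathcal{B}|^{\beta/d}}\bigg(\frac{1}{\omega^q(\mathcal{B})}
\int_{\mathcal{B}}\Big[f(x)-\underset{y\in\mathcal{B}}{\mathrm{ess\,inf}}\,f(y)\Big]^q\omega(x)^q\,dx\bigg)^{1/q}
\leq C\bigg(1+\frac{r}{\rho(x_0)}\bigg)^{(N_0+1)\theta_1}\|f\|_{\mathcal{C}^{\beta,\ast}_{\rho,\theta_1}},
\end{equation*}
with $C=C(d,\beta,\theta_1)$, because the a.e.\ pointwise bound $f(x)-\underset{y\in\mathcal{B}}{\mathrm{ess\,inf}}\,f(y)\leq C|\mathcal{B}|^{\beta/d}(1+r/\rho(x_0))^{(N_0+1)\theta_1}\|f\|_{\mathcal{C}^{\beta,\ast}_{\rho,\theta_1}}$ survives any weighted $L^q$ average; the factor $[\omega]_{A^{\rho,\theta_2}_{p,q}}$ in the stated theorem is superfluous. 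Multiplying this bound by the single factor $[\omega]_{A^{\rho,\theta_2}_{p,q}}(1+r/\rho(x_0))^{\theta_2}$ supplied by \eqref{wangh1} gives part (1) exactly as stated, with one power of the characteristic constant; with this substitution your argument is complete.
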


Summarizing the estimates derived above, we finally obtain the following conclusions by the definitions of .

\begin{cor}\label{cor3}
Let $1\leq p<\infty$ and $\omega\in A^{\rho,\infty}_p(\mathbb R^d)$. Then the following
statements are true.
\begin{enumerate}
\item $f\in \mathrm{BLO}_{\rho,\infty}(\mathbb R^d)$ if and only if there exists a constant $C>0$ such that, for any cube $\mathcal{Q}=Q(x_0,r)\subset\mathbb R^d$,
\begin{equation*}
\bigg(\frac{1}{\omega(\mathcal{Q})}\int_{\mathcal{Q}}
\Big[f(x)-\underset{y\in\mathcal{Q}}{\mathrm{ess\,inf}}\,f(y)\Big]^p\omega(x)\,dx\bigg)^{1/p}
\leq C[\omega]_{A^{\rho,\infty}_p}\bigg(1+\frac{r}{\rho(x_0)}\bigg)^{\mathcal{N}}
\end{equation*}
holds true for some $\mathcal{N}>0$.
\item $f\in \mathcal{C}^{\beta,\ast}_{\rho,\infty}(\mathbb R^d)$ with $0<\beta<1$ if and only if there exists a constant $C>0$ such that, for any ball $\mathcal{B}=B(x_0,r)\subset\mathbb R^d$,
\begin{equation*}
\frac{1}{|\mathcal{B}|^{\beta/d}}\bigg(\frac{1}{\omega(\mathcal{B})}\int_{\mathcal{B}}
\Big[f(x)-\underset{y\in\mathcal{B}}{\mathrm{ess\,inf}}\,f(y)\Big]^p\omega(x)\,dx\bigg)^{1/p}
\leq C[\omega]_{A^{\rho,\infty}_p}\bigg(1+\frac{r}{\rho(x_0)}\bigg)^{\mathcal{N'}}
\end{equation*}
holds true for some $\mathcal{N'}>0$.
\end{enumerate}
\end{cor}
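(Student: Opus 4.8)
The plan is to deduce Corollary~\ref{cor3} directly from Theorems~\ref{mainthm1} and \ref{mainthm3}, using only the bookkeeping built into the definitions of the ``$\infty$'' objects. Recall that $\mathrm{BLO}_{\rho,\infty}(\mathbb R^d)=\bigcup_{\theta>0}\mathrm{BLO}_{\rho,\theta}(\mathbb R^d)$, that $\mathcal{C}^{\beta,\ast}_{\rho,\infty}(\mathbb R^d)=\bigcup_{\theta>0}\mathcal{C}^{\beta,\ast}_{\rho,\theta}(\mathbb R^d)$, and that for $\omega\in A^{\rho,\infty}_p(\mathbb R^d)$ we have a fixed $\theta^{\ast}=\inf\{\theta>0:\omega\in A^{\rho,\theta}_p(\mathbb R^d)\}$ together with $[\omega]_{A^{\rho,\infty}_p}=[\omega]_{A^{\rho,\theta^{\ast}}_p}$. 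Thus membership in an ``$\infty$'' space is exactly the assertion that a $\theta$-indexed estimate holds \emph{for some} finite $\theta$, which is precisely the quantifier ``for some $\mathcal{N}>0$'' appearing in the statement.

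For part~(1), I would argue as follows. Assume first $f\in \mathrm{BLO}_{\rho,\infty}(\mathbb R^d)$, so $f\in\mathrm{BLO}_{\rho,\theta_1}(\mathbb R^d)$ for some $0<\theta_1<\infty$, and fix $\theta_2=\theta^{\ast}$ so that $\omega\in A^{\rho,\theta_2}_p(\mathbb R^d)$. Applying Theorem~\ref{mainthm1}(1) with this pair $(\theta_1,\theta_2)$ yields the desired weighted inequality with exponent $\mathcal{N}=(N_0+1)\theta_1+\eta/p>0$, after absorbing the fixed finite quantity $\|f\|_{\mathrm{BLO}_{\rho,\theta_1}}$ into $C$ and using $[\omega]_{A^{\rho,\theta_2}_p}=[\omega]_{A^{\rho,\infty}_p}$. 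Conversely, suppose the displayed inequality holds for some $\mathcal{N}>0$. Setting $\theta_2:=\theta^{\ast}$ and $\theta_1:=\mathcal{N}+\theta^{\ast}$, the hypothesis becomes exactly assumption~\eqref{assum1} of Theorem~\ref{mainthm1}(2) (since $\theta_1-\theta_2=\mathcal{N}$ and $[\omega]_{A^{\rho,\theta_2}_p}=[\omega]_{A^{\rho,\infty}_p}$), and that theorem gives $f\in\mathrm{BLO}_{\rho,\theta_1}(\mathbb R^d)\subset\mathrm{BLO}_{\rho,\infty}(\mathbb R^d)$.

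Part~(2) follows by the same scheme, with Theorem~\ref{mainthm3} in place of Theorem~\ref{mainthm1} and the geometric factor $|\mathcal{B}|^{-\beta/d}$ carried along unchanged. For the forward implication, $f\in\mathcal{C}^{\beta,\ast}_{\rho,\theta_1}(\mathbb R^d)$ for some $\theta_1$ gives, via Theorem~\ref{mainthm3}(1), the stated estimate with $\mathcal{N'}=(N_0+1)\theta_1>0$; for the converse one again chooses $\theta_2=\theta^{\ast}$ and $\theta_1=\mathcal{N'}+\theta^{\ast}$ so that the hypothesis matches the assumption of Theorem~\ref{mainthm3}(2), whence $f\in\mathcal{C}^{\beta,\ast}_{\rho,\theta_1}(\mathbb R^d)\subset\mathcal{C}^{\beta,\ast}_{\rho,\infty}(\mathbb R^d)$.

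The only point requiring genuine care, and the step I expect to be the main obstacle, is the exponent bookkeeping in the two directions. The forward direction produces a specific, possibly large exponent ($(N_0+1)\theta_1+\eta/p$ for BLO, $(N_0+1)\theta_1$ for Campanato), whereas the converse can only feed the raw exponent $\theta_1-\theta_2$ back into Theorems~\ref{mainthm1}(2)/\ref{mainthm3}(2). These are reconciled precisely because the ``$\infty$'' spaces are unions over all $\theta$: the numerical value of the exponent $\mathcal{N}$ is immaterial to membership, only its finiteness matters, so no quantitative compatibility between the two exponents is needed. One should also record that the substitution $\theta_1=\mathcal{N}+\theta^{\ast}$ is legitimate because $\theta^{\ast}<\infty$ whenever $\omega\in A^{\rho,\infty}_p(\mathbb R^d)$, and that writing $[\omega]_{A^{\rho,\infty}_p}=[\omega]_{A^{\rho,\theta^{\ast}}_p}$ on the right-hand side introduces no new constants.
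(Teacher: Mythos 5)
Your proposal is correct and is essentially the paper's own argument: the paper states Corollary~\ref{cor3} as an immediate consequence of Theorems~\ref{mainthm1} and \ref{mainthm3} together with the union definitions of $\mathrm{BLO}_{\rho,\infty}(\mathbb R^d)$, $\mathcal{C}^{\beta,\ast}_{\rho,\infty}(\mathbb R^d)$ and $A^{\rho,\infty}_p(\mathbb R^d)$, which is exactly the quantifier bookkeeping you carry out (choosing $\theta_2=\theta^{\ast}$ and, for the converse, $\theta_1=\mathcal{N}+\theta^{\ast}$ so that the hypothesis becomes \eqref{assum1}). The only caveat, which your write-up shares with the paper's own convention, is the tacit assumption that the infimum $\theta^{\ast}$ is attained so that $\omega\in A^{\rho,\theta^{\ast}}_p(\mathbb R^d)$ and $[\omega]_{A^{\rho,\infty}_p}$ is finite; otherwise one simply replaces $\theta^{\ast}$ by any slightly larger admissible $\theta_2$.
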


\begin{cor}\label{cor4}
Let $1\leq p<q<\infty$ and $\omega\in A^{\rho,\infty}_{p,q}(\mathbb R^d)$. Then the following statements are true.
\begin{enumerate}
  \item $f\in \mathrm{BLO}_{\rho,\infty}(\mathbb R^d)$ if and only if there exists a constant $C>0$ such that, for any cube $\mathcal{Q}=Q(x_0,r)\subset\mathbb R^d$,
\begin{equation*}
\bigg(\frac{1}{\omega^q(\mathcal{Q})}\int_{\mathcal{Q}}
\Big[f(x)-\underset{y\in\mathcal{Q}}{\mathrm{ess\,inf}}\,f(y)\Big]^q\omega(x)^q\,dx\bigg)^{1/q}\leq C[\omega]_{A^{\rho,\infty}_{p,q}}\bigg(1+\frac{r}{\rho(x_0)}\bigg)^{\mathcal{N}}
\end{equation*}
or
\begin{equation*}
\frac{|\mathcal{Q}|^{1/p-1/q}}{[\omega^p(\mathcal{Q})]^{1/p}}\bigg(\int_{\mathcal{Q}}
\Big[f(x)-\underset{y\in\mathcal{Q}}{\mathrm{ess\,inf}}\,f(y)\Big]^q\omega(x)^q\,dx\bigg)^{1/q}\leq C[\omega]_{A^{\rho,\infty}_{p,q}}\bigg(1+\frac{r}{\rho(x_0)}\bigg)^{\mathcal{N}}
\end{equation*}
holds true for some $\mathcal{N}>0$.
  \item $f\in\mathcal{C}^{\beta,\ast}_{\rho,\infty}(\mathbb R^d)$ with $0<\beta<1$ if and only if there exists a constant $C>0$ such that, for any ball $\mathcal{B}=B(x_0,r)\subset\mathbb R^d$,
\begin{equation*}
\frac{1}{|\mathcal{B}|^{\beta/d}}\bigg(\frac{1}{\omega^q(\mathcal{B})}\int_{\mathcal{B}}
\Big[f(x)-\underset{y\in\mathcal{B}}{\mathrm{ess\,inf}}\,f(y)\Big]^q\omega(x)^q\,dx\bigg)^{1/q}\leq C[\omega]_{A^{\rho,\infty}_{p,q}}\bigg(1+\frac{r}{\rho(x_0)}\bigg)^{\mathcal{N'}}
\end{equation*}
or
\begin{equation*}
\frac{|\mathcal{B}|^{1/p-1/q-\beta/d}}{[\omega^p(\mathcal{B})]^{1/p}}
\bigg(\int_{\mathcal{B}}\Big[f(x)-\underset{y\in\mathcal{B}}{\mathrm{ess\,inf}}\,f(y)\Big]^q\omega(x)^q\,dx\bigg)^{1/q}\leq C[\omega]_{A^{\rho,\infty}_{p,q}}\bigg(1+\frac{r}{\rho(x_0)}\bigg)^{\mathcal{N'}}
\end{equation*}
holds true for some $\mathcal{N'}>0$.
\end{enumerate}
\end{cor}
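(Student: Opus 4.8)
The plan is to derive Corollary~\ref{cor4} as a formal consequence of Theorem~\ref{mainthm2} together with Corollary~\ref{cor1} (for part~(1)), and of Theorem~\ref{mainthm4} together with Corollary~\ref{cor2} (for part~(2)), after unwinding the definitions of the ``$\infty$'' objects as increasing unions over the parameter $\theta$. I use that $\mathrm{BLO}_{\rho,\infty}(\mathbb R^d)=\bigcup_{\theta>0}\mathrm{BLO}_{\rho,\theta}(\mathbb R^d)$, that $A^{\rho,\infty}_{p,q}(\mathbb R^d)=\bigcup_{\theta>0}A^{\rho,\theta}_{p,q}(\mathbb R^d)$, and that $[\omega]_{A^{\rho,\theta}_{p,q}}$ is nonincreasing in $\theta$ with $[\omega]_{A^{\rho,\infty}_{p,q}}=[\omega]_{A^{\rho,\theta^{\ast\ast}}_{p,q}}$. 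Once $\omega\in A^{\rho,\infty}_{p,q}(\mathbb R^d)$ is fixed I may therefore choose some $\theta_2\geq\theta^{\ast\ast}$ with $\omega\in A^{\rho,\theta_2}_{p,q}(\mathbb R^d)$ and $[\omega]_{A^{\rho,\theta_2}_{p,q}}\leq[\omega]_{A^{\rho,\infty}_{p,q}}$.

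For the forward implication in part~(1), if $f\in\mathrm{BLO}_{\rho,\infty}(\mathbb R^d)$ then $f\in\mathrm{BLO}_{\rho,\theta_1}(\mathbb R^d)$ for some $0<\theta_1<\infty$. Feeding this $\theta_1$ and the above $\theta_2$ into Theorem~\ref{mainthm2}(1) produces the first displayed inequality with $\mathcal{N}=(N_0+1)\theta_1+\eta/q$, and into Corollary~\ref{cor1}(1) produces the second displayed inequality with $\mathcal{N}=(N_0+1)\theta_1+\theta_2+\eta/q$; both exponents are positive. For the converse I would use the reparametrization trick: assuming one of the two displayed inequalities holds for some $\mathcal{N}>0$, set $\theta_1:=\mathcal{N}+\theta_2$ so that $\theta_1-\theta_2=\mathcal{N}$, whereupon the hypothesis becomes exactly \eqref{assum2} (respectively the hypothesis of Corollary~\ref{cor1}(2)) with this $\theta_1$. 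Theorem~\ref{mainthm2}(2) (respectively Corollary~\ref{cor1}(2)) then yields $f\in\mathrm{BLO}_{\rho,\theta_1}(\mathbb R^d)\subset\mathrm{BLO}_{\rho,\infty}(\mathbb R^d)$ with $\|f\|_{\mathrm{BLO}_{\rho,\theta_1}}\leq C[\omega]_{A^{\rho,\infty}_{p,q}}$.

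To justify that the two alternatives joined by ``or'' are interchangeable, I appeal to \eqref{wangh1} and \eqref{wangh2}: together they show that the normalizing factors $\omega^q(\mathcal{Q})^{-1/q}$ and $|\mathcal{Q}|^{1/p-1/q}[\omega^p(\mathcal{Q})]^{-1/p}$ are comparable up to the multiplicative factor $[\omega]_{A^{\rho,\theta_2}_{p,q}}(1+r/\rho(x_0))^{\theta_2}$, so that replacing one form by the other only shifts the admissible exponent $\mathcal{N}$ by $\theta_2$ and is absorbed into the constant $C$. Part~(2) follows by the identical argument with Theorem~\ref{mainthm4} and Corollary~\ref{cor2} in place of Theorem~\ref{mainthm2} and Corollary~\ref{cor1}, with $\mathcal{C}^{\beta,\ast}_{\rho,\theta_1}(\mathbb R^d)$ in place of $\mathrm{BLO}_{\rho,\theta_1}(\mathbb R^d)$, the extra factor $|\mathcal{B}|^{\beta/d}$ being carried along unchanged on both sides.

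The only genuinely delicate point, as opposed to pure bookkeeping, is the matching of characteristic constants in the converse direction: since $A^{\rho,\infty}_{p,q}(\mathbb R^d)$ is an increasing union whose characteristic constant is pinned at the infimal exponent $\theta^{\ast\ast}$, I must ensure that Theorem~\ref{mainthm2}(2) and Theorem~\ref{mainthm4}(2) may legitimately be applied with $\theta_2\geq\theta^{\ast\ast}$ while still controlling everything by $[\omega]_{A^{\rho,\infty}_{p,q}}$. This is guaranteed precisely by the monotonicity $[\omega]_{A^{\rho,\theta_2}_{p,q}}\leq[\omega]_{A^{\rho,\theta_1}_{p,q}}$ for $\theta_1<\theta_2$, so no new estimate beyond those already established is required.
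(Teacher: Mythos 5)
Your proposal is correct and follows essentially the same route as the paper: the paper states Corollary \ref{cor4} as an immediate consequence of Theorem \ref{mainthm2}, Corollary \ref{cor1}, Theorem \ref{mainthm4} and Corollary \ref{cor2} (``Summarizing the estimates derived above\ldots by the definitions'' of $\mathrm{BLO}_{\rho,\infty}$, $\mathcal{C}^{\beta,\ast}_{\rho,\infty}$ and $A^{\rho,\infty}_{p,q}$ as unions over $\theta$), and your write-up simply supplies the bookkeeping the paper leaves implicit. In particular, your reparametrization $\theta_1:=\mathcal{N}+\theta_2$ in the converse direction, the use of the monotonicity $[\omega]_{A^{\rho,\theta_2}_{p,q}}\leq[\omega]_{A^{\rho,\infty}_{p,q}}$, and the appeal to \eqref{wangh1}--\eqref{wangh2} to pass between the two normalizations are exactly the ingredients the paper's own derivation relies on.
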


\end{document}